\documentclass[12pt]{amsart}
\pdfoutput=1
\usepackage[margin=1in]{geometry}
\usepackage[utf8]{inputenc}
\usepackage{graphicx, cite}
\usepackage{amsmath,amssymb}
\usepackage{amsthm}
\usepackage{color}
\usepackage{tikz}
\usetikzlibrary{arrows.meta,positioning}
\usepackage{pgfplots}
\pgfplotsset{compat=1.18}
\usetikzlibrary{fillbetween}
\allowdisplaybreaks

\makeatletter
\def\l@section{\@tocline{1}{12pt plus2pt}{0pt}{}{\bfseries}}
\def\l@subsection{\@tocline{2}{0pt}{2pc}{2pc}{}}
\makeatother
\makeatletter
\def\subsection{\@startsection{subsection}{2}{\z@}%
	{-3.25ex\@plus -1ex \@minus -.2ex}%
	{1.5ex \@plus .2ex}%
	{\normalfont\bfseries\boldmath}}
\def\subsubsection{\@startsection{subsubsection}{3}%
	\z@{.5\linespacing\@plus.7\linespacing}{-.5em}%
	{\normalfont\bfseries\boldmath}}
\renewcommand\paragraph{\@startsection{paragraph}{4}{\z@}%
	{3.25ex \@plus1ex \@minus.2ex}%
	{-1em}%
	{\normalfont\normalsize\bfseries}}
\makeatother

\theoremstyle{plain}
\newtheorem{thm}{Theorem}[section]

\newtheorem{cor}[thm]{Corollary}
\newtheorem{lem}[thm]{Lemma}
\newtheorem{prop}[thm]{Proposition}

\theoremstyle{definition}
\newtheorem{defn}[thm]{Definition}

\theoremstyle{remark}
\newtheorem{rem}[thm]{Remark}

\theoremstyle{plain}

\numberwithin{equation}{section}

\theoremstyle{plain} 
\newcommand{\thistheoremname}{}
\newtheorem{genericthm}[thm]{\thistheoremname}

  \newtheorem*{genericthm*}{\thistheoremname}
\newenvironment{namedthm*}[1]
  {\renewcommand{\thistheoremname}{#1}%
   \begin{genericthm*}}
  {\end{genericthm*}}

\newcommand{\D}{{\mathbb D}}

\newcommand{\R}{{\mathbb R}}

\newcommand{\C}{{\mathbb C}}

\newcommand{\Z}{{\mathbb Z}}
\newcommand{\calR}{{\mathcal R}}

\newcommand{\calD}{{\mathcal D}}
\newcommand{\calM}{{\mathcal M}}

\newcommand{\calH}{{\mathcal H}}
\newcommand{\calC}{{\mathcal C}}

\newcommand{\calS}{{\mathcal S}}
\newcommand{\supp}{{\textnormal{supp}}}

\makeatletter
\newcommand{\vast}{\bBigg@{4}}
\newcommand{\Vast}{\bBigg@{5}}
\makeatother

\def\udot#1{\ifmmode\oalign{$#1$\crcr\hidewidth.\hidewidth
    }\else\oalign{#1\crcr\hidewidth.\hidewidth}\fi}

\def\R{\mathbb{R}}
\def\Z{\mathbb{Z}}
\def\T{\mathbb{T}}
\def\C{\mathbb{C}}

\def\beq{\begin{equation}}
\def\eeq{\end{equation}}
\makeatletter
\newcommand{\doublewidetilde}[1]{{%
  \mathpalette\double@widetilde{#1}%
}}
\newcommand{\double@widetilde}[2]{%
  \sbox\z@{$\m@th#1\widetilde{#2}$}%
  \ht\z@=.9\ht\z@
  \widetilde{\box\z@}%
}
\makeatother

\def\one{\mbox{1\hspace{-4.25pt}\fontsize{12}{14.4}\selectfont\textrm{1}}}

\makeatletter
\def\@makefnmark{%
  \leavevmode
  \raise.9ex\hbox{\fontsize\sf@size\z@\normalfont\tiny\@thefnmark}}
\makeatother

\begin{document}
	
\title[]{A Full Characterization of the Dirichlet Carleson embedding $\operatorname{id}: \calD_{p-1}^p \to L^p(\mu)$ for $p>2$}

\author{Bingyang Hu}
\address{(Bingyang Hu) Department of Mathematics and Statistics\\
        Auburn University\\
        Auburn, Alabama, USA, 36849}
\email{bzh0108@auburn.edu}

\author{Xiaojing Zhou}
\address{(Xiaojing Zhou) Department of Mathematics and Statistics\\
         Auburn University\\
         Auburn, Alabama, USA, 36849}
\email{xiz0003@auburn.edu}

 \begin{abstract}
In this paper, we obtain a full characterization of the finite positive Borel measures $\mu$ on $\D$ for which the embedding
$$
\operatorname{id}:\calD_{p-1}^p\longrightarrow L^p(\mu),\qquad p>2,
$$
is bounded. More precisely, for any dyadic system $\calD$ on $\T$, we prove that this embedding is bounded if and only if
$$
\calC_{p,\calD}(\mu)+\calH_{p,\calD}(\mu)<\infty,
$$
where $\calC_{p,\calD}(\mu)$ and $\calH_{p,\calD}(\mu)$ denote the packing energy and the Haar energy of $\mu$, respectively. This resolves a longstanding characterization problem in the theory of Dirichlet-type spaces that arose from Wu's 1999 conjecture, corresponds to the endpoint case not covered by the work of Arcozzi, Rochberg, and Sawyer in 2002, and remained open after the works of Girela and Pel\'aez in 2006 and Galanopoulos, Girela, and Pel\'aez in 2011. We also construct finite measures showing that the two energy conditions are genuinely distinct. The key ingredient in the proof is a reduction of the Dirichlet embedding to a dyadic Whitney embedding, which allows us to combine weighted Hardy inequalities on trees with probabilistic arguments.
\end{abstract}

\date{\today}
\subjclass[2020]{Primary 30H25; Secondary 42B35, 46E15}

\keywords{Dirichlet-type spaces, Carleson measures, analytic Besov spaces, dyadic Whitney embeddings, weighted Hardy inequalities, martingales}

\maketitle
\tableofcontents

\section{Introduction}

Let $\D$ denote the unit disc in $\C$, $\T:=\partial\D$ be the unit circle, $dA$ be the normalized area measure on $\D$ and $H(\D)$ be the space of holomorphic functions on $\D$, endowed with the compact--open topology. For $0<p<\infty$ and $\alpha>-1$, the weighted Dirichlet-type space $\calD_\alpha^p$ consists of all $f \in H(\D)$ such that 
$$
\|f\|_{\calD_\alpha^p}^p
:=|f(0)|^p+\int_\D|f'(z)|^p(1-|z|^2)^\alpha\,dA(z)<\infty.
$$
The main \emph{goal} of this paper is to completely characterize the positive Borel measures $\mu$ for which
\begin{equation}\label{Dirichletembedding}
\operatorname{id}: \calD_{p-1}^p \longrightarrow L^p(\mu)
\end{equation}
is bounded for $2<p<\infty$, thereby resolving the longstanding characterization problem that remained open following the works of Wu \cite{Wu1999}, Girela--Pel\'aez \cite{GirelaPelaez2006}, and Galanopoulos--Girela--Pel\'aez \cite{GGP2011}. In particular, \eqref{Dirichletembedding} is precisely the endpoint case \emph{not} covered by the characterization of Arcozzi, Rochberg, and Sawyer \cite{ArcozziRochbergSawyer2002}; see \eqref{20260830eq01R}. 

\vspace{0.1cm}

The characterization of Carleson measures has long been a \emph{central} theme in complex function theory. We briefly review below the developments most relevant to the problem \eqref{Dirichletembedding}.

\begin{enumerate}
\item To the best of our knowledge, the systematic study of this problem dates back to Carleson's seminal work \cite{Carleson1962}, in which he proved that the Hardy Carleson embedding 
$$
H^p\hookrightarrow L^p(\mu)
$$
is bounded if and only if 
\begin{equation} \label{20260829cond01}
\mu(Q_I)\lesssim |I|,\qquad I\subseteq\T,
\end{equation} 
where $H^p$ denotes the classical Hardy space and $Q_I$ denotes the Carleson tent associated with $I$; see \eqref{20260829defn01}. 

\vspace{0.1cm}

\item Hastings \cite{Hastings1975} subsequently proved that the Bergman embedding 
$$
A^p\hookrightarrow L^p(\mu)
$$ 
is bounded if and only if
\begin{equation} \label{20260829cond02}
\mu(Q_I)\lesssim |I|^2,\qquad I\subseteq\T,
\end{equation} 
where $A^p$ denotes the classical Bergman space.

\vspace{0.1cm}

\item The situation changes substantially for the classical Dirichlet space $\calD_0^2$. Stegenga \cite{Stegenga1980} characterized the positive measures $\mu$ for which
$$
\operatorname{id}:\calD_0^2\longrightarrow L^2(\mu)
$$
is bounded, in terms of the logarithmic capacity of subsets of $\T$. In particular, unlike the Hardy and Bergman cases, conditions of the form \eqref{20260829cond01} or \eqref{20260829cond02}, do not provide a necessary-and-sufficient characterization. Verbitsky \cite{Verbitsky1985} subsequently extended this capacity approach to the analytic Besov spaces
$\calD_{p-2}^p, \ 1<p<\infty$
and characterized the boundedness of
$$
\operatorname{id}:\calD_{p-2}^p \longrightarrow L^p(\mu)
$$
in terms of the corresponding nonlinear capacities. Kerman and Sawyer later extended this line of research to certain radially weighted Dirichlet spaces \cite{KermanSawyer1988}.
\end{enumerate}

 We now turn to the developments concerning the Dirichlet Carleson embedding \eqref{Dirichletembedding}, which is the main subject of the present paper. 

 \vspace{0.1cm}

\begin{enumerate}

\item [(4)] Wu \cite{Wu1999} proved that the embedding \eqref{Dirichletembedding} is bounded if and only if the condition \eqref{20260829cond01} holds for $0<p\leq2$. He further conjectured that the same characterization should remain valid for $p>2$.

\item[(5)] Arcozzi, Rochberg, and Sawyer \cite{ArcozziRochbergSawyer2002} developed an elegant discrete tree characterization of the positive measures $\mu$ for which
\begin{equation} \label{20260830eq01R}
\operatorname{id}:\calD_\alpha^p\longrightarrow L^p(\mu)
\end{equation} 
is bounded in the range
$$
1<p<\infty,\qquad -1<\alpha<p-1.
$$
The embedding \eqref{Dirichletembedding} considered in the present paper corresponds precisely to the endpoint $\alpha=p-1$, which lies outside the range covered by their result.

\vspace{0.1cm}

\item [(6)] In the critical range $p>2$, Girela and Pel\'aez \cite{GirelaPelaez2006} constructed a positive measure $\mu$ satisfying \eqref{20260829cond01}
for which the embedding \eqref{Dirichletembedding}
fails to be bounded, thereby disproving Wu's conjecture. Galanopoulos, Girela, and Pel\'aez \cite{GGP2011} subsequently showed that the Arcozzi--Rochberg--Sawyer condition remains sufficient when $p>2$ but is no longer necessary. More recently, Pel\'aez, P\'erez-Gonz\'alez, and R\"atty\"a \cite{PPR2014} obtained the sharp logarithmic one-box sufficient condition
$$
\mu(Q_I)\lesssim |I|\left(\log\frac{e}{|I|}\right)^{1-\frac{p}{2}} \qquad \textrm{for} \qquad 2<p<+\infty.
$$
\end{enumerate}

\medskip 

The preceding work completely characterizes \eqref{Dirichletembedding} for $0<p\leq2$, but no characterization was known for $2<p<\infty$. Thus, $p>2$ is the only remaining case on the critical scale $\calD_{p-1}^p$. Understanding this remaining case is important not only for completing the Carleson measure theory of the critical Dirichlet scale, but also for the study of several related problems in which Carleson embedding methods for Dirichlet-type spaces play an essential role:
\begin{enumerate}
\item[(a)] embeddings of $Q_p$ and $F(p,q,s)$ spaces into tent spaces \cite{Xiao2008,PauZhao2014,LiuLouZhu2017,HuZhou2025};
\item[(b)] boundedness and compactness of area operators \cite{WuArea2006,GongLouWu2010,Wu2011};
\item[(c)] Volterra-type and integration operators, as well as multipliers, on spaces of Dirichlet type \cite{GirelaPelaezJFA2006,GGP2011,Wu2011, HuZhou2025}.
\end{enumerate}

\medskip 

Returning now to our solution to the problem \eqref{Dirichletembedding}, we first observe that it suffices to consider the case $\mu(\D)<+\infty$, since otherwise the embedding \eqref{Dirichletembedding} fails to be bounded. Let $\calD$ be a dyadic system on $\T$. For each $I\in\calD$, let $I_{+}$ and $I_{-}$ denote its two dyadic children, ordered counterclockwise, and let
\begin{equation} \label{20260829defn01}
Q_I:=\left\{z=re^{2\pi i t} \in\D:1-|I| \le r<1,\ e^{2\pi i t} \in I\right\}
\end{equation} 
be the \emph{Carleson tent} associated with $I$.

To state our main result, we introduce two dyadic energies associated with $\mu$.

\begin{defn} \label{20260825defn01}
Let $p>2$ and let $\mu$ be a finite positive Borel measure on $\D$. We define the \emph{packing energy} of $\mu$ by
$$
\calC_{p,\calD}(\mu):=
\sup_{\substack{J\in\calD\\ \mu(Q_J)>0}}
\frac{1}{\mu(Q_J)}
\left[ \sum_{\substack{I\in\calD\\ I\subseteq J}}
\left(\frac{\mu(Q_I)}{|I|}\right)^{\frac{p}{p-2}}|I| \right],
$$
and the \emph{Haar energy} of $\mu$ by
$$
\calH_{p,\calD}(\mu):=
\sup_{\substack{J\in\calD\\ \mu(Q_J)>0}}
\frac{1}{\mu(Q_J)}
\left[\sum_{\substack{I\in\calD\\ I\subseteq J}}
\left(\frac{\left|\mu(Q_{I_+})-\mu(Q_{I_-})\right|}{|I|}\right)^{\frac{p}{p-1}}|I| \right].
$$
Finally, if $\mu=0$, then set $\calC_{p, \calD}(\mu)=\calH_{p, \calD}(\mu)=0$. 
\end{defn}

Our first main result is the following, which gives an answer to the problem \eqref{Dirichletembedding}. 

\begin{thm}\label{20260813thmmain}
Let $2<p<\infty$, $\calD$ be any dyadic system on $\T$, and $\mu$ be a finite positive Borel measure on $\D$. Then
\begin{equation} \label{Dirichletembedding01}
\int_\D|f(z)|^p\,d\mu(z)
\leq C\|f\|_{\calD_{p-1}^p}^p,
\qquad f\in\calD_{p-1}^p
\end{equation} 
if and only if
$$
\calC_{p,\calD}(\mu)+\calH_{p,\calD}(\mu)<\infty.
$$
Moreover, if $C_{best}$ denotes the optimal constant in \eqref{Dirichletembedding01}, then
$$
C_{best} \simeq_p \calC_{p,\calD}(\mu)^{\frac{p-2}{2}}+ \calH_{p,\calD}(\mu)^{p-1}.
$$
\end{thm}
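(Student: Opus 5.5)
The plan follows the route suggested in the abstract: first reduce the analytic embedding to a dyadic Whitney embedding on the tree $\calD$, then characterize the dyadic problem with weighted Hardy inequalities on trees and martingale (probabilistic) arguments.

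\textbf{Step 1: reduction to a dyadic model.} For $f\in\calD_{p-1}^p$ I use the Whitney decomposition of $\D$ into top halves $T_I:=Q_I\setminus(Q_{I_+}\cup Q_{I_-})$. By subharmonicity and the Littlewood--Paley characterization, $\|f\|_{\calD_{p-1}^p}^p$ is comparable to $|f(0)|^p+\sum_{I}|I|\,|a_I|^p$. Here $a_I$ is a discrete derivative, namely the oscillation of $f$ on $T_I$ measured at scale one: $|a_I|\simeq |I|\sup_{T_I}|f'|$, up to passing to slightly enlarged boxes. Also $f(z)$ for $z\in T_J$ is approximated by the partial sum $f(0)+\sum_{I\supseteq J}\Delta_I$, where $\Delta_I$ is the increment of $f$ between the centers of $T_I$ and of its parent. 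The weight $(1-|z|)^{p-1}$ makes $|I|^{p}\cdot|I|^{-1}$ balance exactly into $\sum|I||\Delta_I|^p$. This is the endpoint phenomenon: the tree norm has the weight $|I|$, so $\calD_{p-1}^p$ behaves like $H^p$ along geodesics.

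The decisive point is that the increments $\Delta_I$ are not arbitrary. They come from a function with boundary values, so they carry signs, and they are essentially martingale differences of the dyadic martingale $E_n f$ on $\T$. I would therefore prove that \eqref{Dirichletembedding01} is equivalent to the dyadic Whitney embedding
$$
\sum_{J}\mu(T_J)\Bigl|\sum_{I\supseteq J}d_I\Bigr|^p\lesssim \sum_I |I|\,|d_I|^p ,
$$
where $d_I$ ranges over martingale difference sequences, i.e.\ Haar coefficients $d_I=\langle g,h_I\rangle |I|^{-1/2}$-type data. The direction ``analytic $\Rightarrow$ dyadic'' needs test functions. I would build them from lacunary Hadamard-type or kernel sums $\sum_I c_I\,(\text{normalized }|I|\text{-scale kernel at }I)$, so that the increments can be prescribed with controlled error. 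A standard averaging over translated or $1/3$-shifted dyadic grids should remove the dependence on the grid, which explains the ``any dyadic system'' clause.

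\textbf{Step 2: sufficiency.} Write $S_J=\sum_{I\supseteq J}d_I$ and sum by parts, $\sum_J\mu(T_J)|S_J|^p=\sum_I\mu(Q_I)\bigl(|S_I|^p-|S_{\hat I}|^p\bigr)$. Expanding the increment $|S_I|^p-|S_{\hat I}|^p$ splits into two pieces.

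The \emph{linear} term is $p|S_{\hat I}|^{p-2}S_{\hat I}d_I$. Since $d_{I_+}=-d_{I_-}$ for Haar martingales, after pairing children it becomes $\sum_I |S_I|^{p-1}|d_{I_\pm}|\,|\mu(Q_{I_+})-\mu(Q_{I_-})|$. Hölder with exponents $p$ and $p/(p-1)$ together with a Carleson-type (stopping time) embedding bounds this by $\calH^{(p-1)/p}$ times $\|S\|_{L^p}^{p-1}\|d\|$. The Haar energy is exactly the packing condition for the measure $|\mu(Q_{I_+})-\mu(Q_{I_-})|^{p/(p-1)}|I|^{-1/(p-1)}$ relative to $\mu$.

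The \emph{quadratic} term is $\lesssim |S|^{p-2}|d_I|^2\mu(Q_I)$. I apply Hölder with $p/(p-2)$ and $p/2$, giving $\bigl(\sum (\mu(Q_I)/|I|)^{p/(p-2)}|I|\,\cdot\text{weight}\bigr)^{(p-2)/p}\bigl(\sum |I||d_I|^p\bigr)^{2/p}$-type products. The square function / Burkholder inequality $\|(\sum|d_I|^2\chi_I)^{1/2}\|_p\simeq\|S\|_p$ handles the $d_I^2$ structure, and the packing energy absorbs the rest via a Carleson embedding on the tree. Weighted Hardy inequalities on trees, in the Arcozzi--Rochberg--Sawyer form, control maximal partial sums, and the supremum over $J$ enters through a stopping time. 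The exponents produce $\calC^{(p-2)/2}$ and $\calH^{p-1}$.

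\textbf{Step 3: necessity.} This is the hardest step. For $\calC$, I would test with random signs: set $d_I=\varepsilon_I a_I$ with Rademacher $\varepsilon_I$ and take expectations. Khintchine gives $\mathbb E|S_J|^p\simeq(\sum_{I\supseteq J}a_I^2)^{p/2}$, so the embedding implies a Hardy inequality on trees for squares with exponent $p/2>1$. Its dual characterization (the Arcozzi--Rochberg--Sawyer tree testing condition, localized to $J$) should yield exactly the packing energy with exponent $p/(p-2)=(p/2)'$. For $\calH$, I test with deterministic Haar-aligned choices $d_{I_\pm}=\pm\operatorname{sgn}(\mu(Q_{I_+})-\mu(Q_{I_-}))\,b_I$ localized in $J$. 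The linear term then dominates and gives $\sum b_I|\mu(Q_{I_+})-\mu(Q_{I_-})|\lesssim C(\sum|I|b_I^p)^{1/p}\mu(Q_J)^{(p-1)/p}$, and duality yields $\calH\lesssim C^{1/(p-1)}$.

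\textbf{Main obstacle.} The genuine difficulty is in Step 1: realizing arbitrary martingale-type increment patterns by analytic functions with comparable $\calD_{p-1}^p$ norm and only a controlled error. The tail errors are not summable trivially at the endpoint $\alpha=p-1$. I would first try Bergman-kernel-type atoms with high-order decay $\bigl((1-|a_I|^2)/(1-\bar a_Iz)\bigr)^N$ and check almost-orthogonality of their increments across scales.
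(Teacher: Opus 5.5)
Your Step 1 is not yet a reduction, and the route you sketch for it is not the one that works. The pointwise increments $\Delta_I$ of $f$ between centers of Whitney boxes are not martingale differences, since nothing forces exact cancellation between the two children of $I$. So a dyadic inequality valid only for martingale sequences cannot be applied to them. In the other direction, as you acknowledge, you have no analytic test functions realizing a prescribed Haar pattern.

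The missing idea is to leave the analytic class and work on the boundary. Identify $\calD_{p-1}^p$ with $B_{p,p,+}^0(\T)$. Since the Riesz projections are bounded on $L^p(\T)$ and commute with the Littlewood--Paley pieces, writing $P[h]=f+\overline g$ shows that \eqref{Dirichletembedding01} is equivalent, with comparable constants, to $\int_\D|P[h]|^p\,d\mu\lesssim\|h\|_{B_{p,p}^0(\T)}^p$ for \emph{all} $h$.

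You may then test with Poisson extensions of Haar polynomials, whose norm is comparable to $|\widehat h(0)|^p+\sum_I|a_I|^p|I|$. On $Q_J^{\textrm{up}}$, compare $P[h]$ not with values of $f$ but with the boundary average $|J|^{-1}\int_J h=\widehat h(0)+\sum_{I\supsetneq J}a_I(h_I|_J)$, which is an honest martingale. The estimate that makes this work is
$$\int_\D|P[h]-\mathcal W_{\calD}h|^p\,d\mu\lesssim_p\bigl(\sup_I\mu(Q_I)/|I|\bigr)\|h\|_{B_{p,p}^0(\T)}^p.$$
It is proved scale by scale and summed by Young's inequality. For ${\bf d}_kh$ viewed at scale $n>k$ one gets decay $2^{-(n-k)/p}$ from the $L^p$ modulus of continuity of a step function. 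For $k\ge n$ one gets decay $2^{-(k-n)}$ from the mean-zero property and the smoothness of the Poisson kernel.

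Both embeddings force the dyadic Carleson condition (test with $\one_I$), so both directions of the reduction follow at once, for each fixed $\calD$. No kernel atoms are needed. No averaging over shifted grids is needed either: the theorem is a statement about every fixed grid, and averaging would only give a grid-averaged condition.

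Step 2 does not close as described either. After summation by parts and H\"older, the Carleson-type embeddings only give $\sum_K\sigma_K|S_K|^p\le\calH_{p,\calD}(\mu)\sum_J\mu(Q_J^{\textrm{up}})\sup_{K\supseteq J}|S_K|^p$, and similarly with $\calC_{p,\calD}(\mu)$. So your absorption needs a maximal inequality for the \emph{signed} partial sums in $L^p(\nu)$, where $\nu(\{J\}):=\mu(Q_J^{\textrm{up}})$.

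This is where all the cancellation must be used. Dominating $\sup_K|S_K|$ by $\sum|d_I|$ and applying a positive Hardy inequality leads back to the Arcozzi--Rochberg--Sawyer condition $\sum_{I\subseteq K}\mu(Q_I)^{p'}|I|^{1-p'}\lesssim\mu(Q_K)$. Galanopoulos--Girela--Pel\'aez showed this is sufficient but not necessary here. Burkholder's inequality for Lebesgue measure on $\T$ says nothing about $L^p(\nu)$.

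The observation you are missing concerns the filtration on $(\calD,\nu)$ whose atoms at time $n$ are the singletons $\{J\}$ with $|J|>2^{-n}$ and the subtrees $\mathcal S(I)=\{J\subseteq I\}$, $I\in\calD_n$. Relative to it,
\[
S_J=\widehat h(0)+\sum_{I}a_I\bigl(\one_{\{J\subseteq I_+\}}-\one_{\{J\subseteq I_-\}}-m_I\one_{\{J\subseteq I\}}\bigr)+\sum_{I\supseteq J}a_Im_I,\qquad m_I:=\frac{\mu(Q_{I_+})-\mu(Q_{I_-})}{\mu(Q_I)}.
\]
The middle sum is a $\nu$-martingale with differences bounded by $2|a_I|$; they integrate to zero over $\mathcal S(I)$ precisely because $m_I$ is subtracted. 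Burkholder in $L^p(\nu)$, followed by the tree Hardy inequality with exponent $p/2$ and weight $|I|$, bounds this part by $\calC_{p,\calD}(\mu)^{\frac{p-2}{2}}\sum_I|a_I|^p|I|$. The drift is a Hardy sum with exponent $p$ and weight $|I||m_I|^{-p}$, bounded by $\calH_{p,\calD}(\mu)^{p-1}\sum_I|a_I|^p|I|$. With this decomposition the summation by parts becomes unnecessary, or alternatively Doob's inequality supplies your maximal bound.

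Your Step 3 is essentially right. For $\calC$ you use Rademacher averaging plus the exponent-$p/2$ tree characterization. You must still pass from sibling-symmetric sequences to arbitrary ones, for instance by replacing each sibling pair by its maximum. For $\calH$ you pair with $\one_{\{J'\subseteq J\}}$ and use $\ell^p$-duality, which is plain H\"older; no dominant linear term is needed. However, Step 3 yields necessity for \eqref{Dirichletembedding01} only once the reduction of Step 1 is proved.
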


The proof of Theorem~\ref{20260813thmmain} is motivated by the tree approach of Arcozzi, Rochberg, and Sawyer \cite{ArcozziRochbergSawyer2002} and by ideas from recent developments in dyadic analysis on Carleson tents, particularly those in \cite{HuZhou2026,HuLuoXiaoZhou2026,HuXiaoZhou2026}. The main idea is to replace the harmonic embedding with a dyadic Whitney embedding, where tools from dyadic harmonic analysis can be applied more directly. Since the proof involves several reductions, we summarize its structure below.

\vspace{0.1cm}

\begin{center}
\begin{tikzpicture}[
    font=\footnotesize,
    node distance=15mm and 18mm,
    box/.style={
        rectangle,
        rounded corners=5pt,
        thick,
        align=center,
        text width=0.405\textwidth,
        inner sep=6pt
    },
    arrow/.style={
        <->,
        >=Latex,
        thick,
        draw=black!70,
        shorten <=2pt,
        shorten >=2pt
    }
]
\node[box,draw=blue!50!black,fill=blue!5] (dirichlet) {
\textbf{Dirichlet embedding} \eqref{Dirichletembedding01}\\[1mm]
$\displaystyle
\operatorname{id}:\calD_{p-1}^p\longrightarrow L^p(\mu).
$
};
\node[box,draw=teal!55!black,fill=teal!5,right=of dirichlet] (harmonic) {
\textbf{Harmonic embedding} \eqref{20260819eq01}\\[1mm]
$\displaystyle
P:B_{p,p}^0(\T)\longrightarrow L^p(\mu).
$\\[1mm]
Here $P$ is the Poisson extension and $B_{p,p}^0(\T)$ is the periodic Besov space; see \eqref{20260819eq00} and \eqref{20260829defn01X}.
};
\node[box,draw=orange!60!black,fill=orange!5,below=of harmonic] (whitney) {
\textbf{Whitney embedding} \eqref{20260820eq08}\\[1mm]
$\displaystyle
\mathcal W_{\calD}:B_{p,p}^0(\T)\longrightarrow L^p(\mu),
$\\[1mm]
where $\mathcal W_{\calD}$ is the dyadic Whitney extension; see \eqref{20260820eq06}.
};
\node[box,draw=green!50!black,fill=green!5,left=of whitney] (tree) {
\textbf{Tree estimate} \eqref{20260820eq09}\\[1mm]
Using two consequences of the weighted Hardy inequality of Arcozzi, Rochberg, and Sawyer
\cite[Theorem~3]{ArcozziRochbergSawyer2002} as auxiliary estimates, together with probabilistic arguments, we prove the two-energy characterization
\[
\calC_{p,\calD}(\mu)^{\frac{p-2}{2}}
+\calH_{p,\calD}(\mu)^{p-1}<\infty.
\]
};
\draw[arrow] 
(dirichlet.east) -- node[midway,above] {\tiny Reduction I} (harmonic.west);

\draw[arrow] 
(harmonic.south) -- node[midway,right] {\tiny Reduction II} (whitney.north);
\draw[arrow] (whitney.west) -- (tree.east);
\end{tikzpicture}
\end{center}

\begin{rem}
The main difference between our approach and that of Arcozzi, Rochberg, and Sawyer \cite{ArcozziRochbergSawyer2002} lies in the role of cancellation. In the regime $-1<\alpha<p-1$, their discretization reduces the embedding problem to a positive Hardy inequality on a tree (or equivalently, the collection of all upper Carleson tents), with no cancellation involved; see \cite[Theorem 1]{ArcozziRochbergSawyer2002}.

\vspace{0.1cm}

At the endpoint $\alpha=p-1$, however, this positive tree model no longer yields an exact characterization, as shown by Galanopoulos, Girela, and Pel\'aez \cite{GGP2011}. Our proof instead passes through the boundary Besov space and the dyadic Whitney extension, leading to \emph{signed sums} of Haar coefficients along dyadic chains of upper Carleson tents; see \eqref{20260820eq09}. The packing energy $\calC_{p, \calD}(\mu)$ controls the martingale component of these sums, whereas the Haar energy measures the imbalance between the two dyadic children. Thus, although two auxiliary estimates, Corollaries \ref{20260825cor01} and \ref{20260825cor02} in our proof, follow from the general weighted Hardy inequality \cite[Theorem~3]{ArcozziRochbergSawyer2002} of Arcozzi, Rochberg, and Sawyer, the endpoint reduction and the resulting two-energy analysis are designed to capture the cancellation of the Haar coefficients, a feature absent from the Arcozzi--Rochberg--Sawyer framework.
\end{rem}

We also show that the Haar energy $\calH_{p, \calD}(\mu)$ cannot be controlled by the
packing energy $\calC_{p, \calD}(\mu^{(N)})$. More precisely, we have the following result. 

\begin{thm} \label{mainthm02}
Let $2<p<\infty$ and let $\calD$ be a dyadic system on $\T$. For every integer $N \ge 2$, there exists a finite measure $\mu^{(N)}$ on $\D$, such that 
$$
\calC_{p, \calD} \left(\mu^{(N)} \right) \lesssim_p 1, \qquad \textrm{and} \qquad \calH_{p, \calD} \left( \mu^{(N)} \right) \gtrsim_p N^{\frac{p}{p-1}}. 
$$
\end{thm}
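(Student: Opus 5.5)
The plan is to take a measure whose Carleson densities $\mu(Q_I)/|I|$ are uniformly \emph{small}, of size $\delta\ll1$, on every dyadic interval of the first $M$ generations. The point is to exploit the gap between the two exponents: since $\frac{p}{p-1}<\frac{p}{p-2}$, a density of size $\delta$ contributes $\delta^{p/(p-1)}$ to the Haar energy but only $\delta^{p/(p-2)}$ to the packing energy. The bookkeeping that fixes the parameters is the following heuristic, which holds with $J=\T$. Suppose that for every $I\subseteq \T$ of generation $<M$ we have
\[
\mu(Q_I)\simeq \delta|I| \qquad\textrm{and}\qquad |\mu(Q_{I_+})-\mu(Q_{I_-})|\gtrsim \delta |I|.
\]
Then each generation contributes about $\delta^{p/(p-2)}$ to the packing sum and about $\delta^{p/(p-1)}$ to the Haar sum, while $\mu(Q_\T)\simeq\delta$. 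Hence the packing ratio is about $M\delta^{2/(p-2)}$ and the Haar ratio about $M\delta^{1/(p-1)}$. Choosing $\delta=M^{-(p-2)/2}$ normalizes the packing ratio to $1$ and makes the Haar ratio $M^{1-\frac{p-2}{2(p-1)}}=M^{\frac{p}{2(p-1)}}$. Taking $M=N^2$ gives exactly $N^{p/(p-1)}$, the rate claimed in Theorem~\ref{mainthm02}. So I aim for a measure $\mu^{(N)}$ realizing these two properties for all $I$ of generation $<N^2$, with $\mu^{(N)}$ supported on the Whitney boxes of generation $\le N^2$ so that the sums terminate.

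\textbf{Key steps.}
\begin{enumerate}
\item[(i)] Construct the profile $h_I:=\mu(Q_I)/|I|$. It must stay comparable to $\delta$ along every dyadic chain while its two children differ by $\gtrsim\delta$ at each of the $M$ levels.
\item[(ii)] Verify $\calC_{p,\calD}(\mu^{(N)})\lesssim_p1$ for \emph{every} $J$, not only $J=\T$. For $J$ of generation $n$, the localized sum runs over only $M-n$ levels while $\mu(Q_J)\simeq\delta|J|$, so the ratio is $\lesssim (M-n)\delta^{2/(p-2)}\le1$. For generations $\ge M$ the measure lives in a single Whitney box, and the sum is a geometric tail.
\item[(iii)] Bound the Haar energy from below by testing only $J=\T$.
\end{enumerate}

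\textbf{Main obstacle: step (i).} The measure is positive, and the naive constructions fail.
\begin{itemize}
\item Depositing a mass on the Whitney box of one child at each level makes the densities accumulate, $h_I\sim\delta(M-n)$ rather than $\delta$. The same computation then gives only a bounded Haar ratio.
\item A multiplicative cascade $h_{I_\pm}=h_I(1\pm\tfrac12)$ has the right increments, but its $\frac{p}{p-2}$-moments grow geometrically, which destroys the packing bound.
\item Softening the cascade to increments $\pm M^{-1/2}$ keeps the moments bounded, but it loses exactly the factor needed.
\end{itemize}
What I would try first is a deterministic ``zig-zag'' density. At each level, the child that received the surplus at the previous level receives the deficit at the next, so that $h$ oscillates between, say, $\delta$ and $3\delta$ along every chain, instead of performing a random walk. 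If an exact zig-zag cannot be made consistent with additivity of $\mu$, the fallback is to freeze (stop) the profile whenever it leaves $[\delta,3\delta]$ and restart it on a fresh subtree. One would then show that the stopped region still carries a positive proportion of the Haar sum across all $N^2$ levels. This stopping-time bookkeeping is where I expect the real work to lie.
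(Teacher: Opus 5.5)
\textbf{There is a genuine gap in step (i).} It is not merely the hard part: for large $M$ the configuration it asks for does not exist, and the heuristic fixing $\delta$ and $M=N^2$ rests on it.

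For $x\in\T$ let $I_k(x)$ be the generation-$k$ dyadic interval containing $x$, and set $X_k(x):=\mu(Q_{I_k(x)})/|I_k(x)|$. Since $\mu(Q_I)=\mu(Q_I^{\textrm{up}})+\mu(Q_{I_+})+\mu(Q_{I_-})$, the sequence $(X_k)$ is a nonnegative supermartingale under normalized Lebesgue measure. In its Doob decomposition $X_k=X_0+\sum_{j\le k}d_j-A_k$, the martingale increments are exactly $d_{k+1}=\pm(\mu(Q_{I_+})-\mu(Q_{I_-}))/|I|$ with $I=I_k(x)$, and $A_k$ is predictable and nondecreasing.

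Suppose $X_k\in[c\delta,C\delta]$ for $k\le M$. Then $\mathbb E[A_M-A_k\mid\mathcal F_k]\le X_k\le C\delta$, so $\mathbb E A_M^2\le 2C\delta\,\mathbb E A_M\lesssim\delta^2$. Hence $\sum_{k\le M}\mathbb E|d_k|^2=\mathbb E|X_M-X_0+A_M|^2\lesssim\delta^2$. So splits of size $\gtrsim\delta$ can occur at only $O(1)$ levels. No zig-zag, and no other positive profile, stays comparable to $\delta$ while splitting by $\gtrsim\delta$ at $N^2$ consecutive levels.

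More is true. With $p'=\frac{p}{p-1}$, Jensen and H\"older give $\sum_{k\le M}\mathbb E|d_k|^{p'}\le M^{1-p'/2}\bigl(\sum_{k}\mathbb E|d_k|^2\bigr)^{p'/2}\lesssim M^{1-p'/2}\delta^{p'}$, while $\calC_{p,\calD}(\mu)\gtrsim M\delta^{2/(p-2)}$. Since $1-\frac{p}{2(p-1)}-\frac{p-2}{2(p-1)}=0$, the Haar ratio at $J=\T$ is $\lesssim_p\calC_{p,\calD}(\mu)^{\frac{p-2}{2(p-1)}}$. Thus every construction in which all densities stay at one scale $\delta$, stopped or not, yields only $O(1)$ in your step (iii). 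Your freezing fallback, with steps of size $\delta$ inside $[\delta,3\delta]$, also freezes most paths after $O(1)$ levels.

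\textbf{The missing idea.} Your softened cascade with steps $\pm M^{-1/2}$ is essentially one admissible block. What is missing is to spend only $N^{-2}$ of the packing budget on each block and stack $N^2$ of them, letting the density scale drift between blocks. By concavity of $t\mapsto t^{\frac{p-2}{2(p-1)}}$ this gives $N^2\cdot(N^{-2})^{\frac{p-2}{2(p-1)}}=N^{\frac{p}{p-1}}$.

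This is what the paper does. Lemma~\ref{20260828lem01} builds a block of $L\simeq N^{-2}a^{-2/(p-2)}$ generations. In it, $D(I)=\mu(Q_I)/|I|$ performs a random walk with the small step $\eta=a/(K\sqrt L)$, frozen on leaving $[a/2,3a/2]$. Kolmogorov's maximal inequality keeps a fixed proportion of each generation active. Per unit of mass, the block therefore costs $\simeq N^{-2}$ in packing energy and gains $\simeq N^{-(p-2)/(p-1)}$ in Haar energy, which saturates the single-scale bound above.

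Then $N^2$ blocks are stacked, each restarted at the current value $D(R)$ with its own length $L_R$. So the scale of $D$ drifts, and the number of generations far exceeds $N^2$. All mass is placed at the last generation, so $D(I)|I|=D(I_+)|I_+|+D(I_-)|I_-|$ and $\sum_{R\in\calR_{m-1},\,R\subseteq J}D(R)|R|=D(J)|J|$.

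This mass conservation, not a uniform bound $\mu(Q_J)\simeq\delta|J|$, is what must replace your step (ii). Each $J$ sees at most $N^2$ block layers, each costing $\lesssim N^{-2}D(J)|J|$.
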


The rest of the paper is organized as follows. In Section~\ref{Sec02}, we identify $\calD_{p-1}^p$ with the analytic subspace of the periodic Besov space $B_{p,p}^0(\T)$ and reduce the Dirichlet embedding to a harmonic embedding. In Section~\ref{Sec03}, we introduce the dyadic Whitney extension, establish the equivalence between the harmonic and Whitney embeddings, and reformulate the latter as an estimate on the dyadic tree. Section~\ref{Sec04} recalls the weighted Hardy inequality of Arcozzi, Rochberg, and Sawyer and derives the two consequences associated with the packing and Haar energies. In Section~\ref{Sec05}, we combine these reductions with the weighted Hardy estimates and martingale arguments to prove Theorem~\ref{20260813thmmain}. Finally, in Section~\ref{Sec06}, we construct finite atomic measures with uniformly bounded packing energy and arbitrarily large Haar energy, thereby proving Theorem~\ref{mainthm02}.

Throughout this paper, for any two nonnegative quantities $a$ and $b$, we write $a \lesssim b$ if there exists an absolute constant $C > 0$ independent of $a$ and $b$ such that $a \le Cb$. The notation $a \simeq b$ indicates that both $a \lesssim b$ and $b \lesssim a$ hold simultaneously. Finally, if the implicit constants may depend on $p$, we write $\lesssim_p$ and $\simeq_p$.
\bigskip

\noindent{\textbf{Acknowledgments.}}
The authors would like to thank Ruhan Zhao and Jouni R\"atty\"a for bringing the problem studied in this paper to their attention. They are also grateful to Jos\'e \'Angel Pel\'aez for helpful discussions and for informing them of related work in progress. The first author was supported by NSF grant DMS-2555999 and by the Simons Foundation Travel Support for Mathematicians program under award MPS-TSM-00007213.

\bigskip 

\section{Reduction I: from Dirichlet embedding to harmonic embedding} \label{Sec02}

We first recall some basic facts about periodic function spaces and the Littlewood--Paley characterization of the Dirichlet space $\calD_{p-1}^p$, which may be viewed as periodic counterparts of the corresponding results on $\R^n$. For the notation and general setup of periodic function spaces, we follow Schmeisser and Triebel \cite[Section~3]{SchmeisserTriebel1987}.

\vspace{0.1cm}

To begin with, we identify $\T$ with $\R/\Z$, equipped with normalized Lebesgue measure, and let $\mathcal D'(\T)$ denote the space of distributions on $\T$. For $h\in\mathcal D'(\T)$, its \emph{Fourier coefficients} are defined by
$$
\widehat h(n):=\left\langle h,e^{-2\pi in(\cdot)}\right\rangle,
\qquad n\in\Z.
$$
In particular, if $h\in L^1(\T)$, then
$$
\widehat h(n)=\int_0^1h(t)e^{-2\pi int}\,dt,
\qquad n\in\Z.
$$

Fix an even function $\chi\in C_c^\infty(\R)$ such that $\chi$ is nonincreasing on $[0,\infty)$ and
$$
0\leq\chi\leq1,\qquad
\chi(\xi)=1\quad\text{for }|\xi|\leq1,
\qquad
\chi(\xi)=0\quad\text{for }|\xi|\geq2.
$$
Set
$$
\varphi_0(\xi):=\chi(\xi)
$$
and, for $j\geq1$,
$$
\varphi_j(\xi):=\chi(2^{-j}\xi)-\chi(2^{-j+1}\xi).
$$
Then $\varphi_j\geq0$ and
$$
\sum_{j=0}^\infty\varphi_j(\xi)=1,
\qquad \xi\in\R.
$$

For $h\in\mathcal D'(\T)$ and $j\geq0$, define the \emph{Littlewood--Paley projection}
$$
(\Delta_jh)(t):=
\sum_{n\in\Z}\varphi_j(n)\widehat h(n)e^{2\pi int},
\qquad t\in\T.
$$
For $1<p<\infty$, the \emph{periodic Besov space} $B_{p,p}^0(\T)$ consists of all $h\in\mathcal D'(\T)$ such that
\begin{equation} \label{20260829defn01X}
\|h\|_{B_{p,p}^0(\T)}^p:=\sum_{j=0}^\infty
\|\Delta_jh\|_{L^p(\T)}^p<+\infty.
\end{equation} 
We further denote by
$$
B_{p,p,+}^0(\T):=\left\{h\in B_{p,p}^0(\T):
\widehat h(n)=0\ \text{for every }n<0
\right\}
$$
the analytic subspace of $B_{p,p}^0(\T)$.

The following result identifies the Dirichlet-type space $\calD_{p-1}^p$ with the analytic periodic Besov space $B_{p,p,+}^0(\T)$. We believe that this identification is implicitly contained in the literature, and its counterpart on $\R^n$ is routine. Since the precise periodic analytic formulation needed here does not seem to be readily available, we include a proof for the reader's convenience.

\begin{prop}\label{20260813eq01}
Let $1<p<\infty$. For every analytic polynomial
\begin{equation} \label{20260814eq03}
f(z)=\sum_{n=0}^N a_nz^n,
\end{equation}
define
\begin{equation} \label{20260819eq02A}
f^*(t):=\sum_{n=0}^N a_ne^{2\pi int},\qquad t\in\T.
\end{equation} 
Then
\begin{equation}\label{20260818eq01}
\|f\|_{\calD_{p-1}^p}^p\simeq_p\|f^*\|_{B_{p,p}^0(\T)}^p,
\end{equation}
where the implicit constants depend only on $p$ and are independent of $N$ and the coefficients $\{a_n\}_{n=0}^N$. 

Consequently, the map $f\mapsto f^*$, initially defined on analytic polynomials, extends uniquely to an isomorphism
\begin{equation} \label{20260818eq02}
\operatorname{Tr}:\calD_{p-1}^p\longrightarrow B_{p,p,+}^0(\T)
\end{equation} 
with equivalent norms. More precisely, if $f(z)=\sum_{n=0}^\infty a_nz^n\in\calD_{p-1}^p$, then the distribution $\operatorname{Tr}f$ is characterized by
$$
\widehat{\operatorname{Tr}f}(n)=
\begin{cases}
a_n,&n\geq0,\\
\\
0,&n<0.
\end{cases}
$$
\end{prop}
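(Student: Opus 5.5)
Our plan is to compare both sides of \eqref{20260818eq01} block by block in frequency, via a dyadic Littlewood--Paley decomposition of $f'$ on annuli. Write $f_j:=\Delta_j f^*$, regarded as analytic polynomials $f_j(z)=\sum_n \varphi_j(n)a_nz^n$. These have frequencies in $[2^{j-1},2^{j+1}]$ for $j\ge1$, and $f_0$ carries the frequencies $n\le 2$.

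\textbf{Lower bound for $\|f\|_{\calD_{p-1}^p}$.} We split $\D$ into the annuli $A_k=\{1-2^{-k}\le |z|<1-2^{-k-1}\}$, so that
$$
\int_\D |f'|^p(1-|z|^2)^{p-1}\,dA\simeq \sum_k 2^{-k(p-1)}\int_{A_k}|f'|^p\,dA\simeq\sum_k 2^{-kp}\, M_p^p(f',r_k)
$$
for representative radii $r_k=1-2^{-k}$. Here $M_p(g,r):=\|g(re^{2\pi i\cdot})\|_{L^p(\T)}$, and the replacement of the annulus average by a single radius uses only that $M_p$ is increasing in $r$. The key one-dimensional inputs are classical multiplier estimates for smooth dyadic Fourier multipliers on $\T$. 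These are uniformly bounded on $L^p(\T)$, $1<p<\infty$, by the periodic Marcinkiewicz/Mikhlin theorem, or simply by $L^1$-bounds on the kernels, since we only need compactly supported smooth symbols of the form $m(2^{-j}n)$.

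\textbf{Step 1.} We show $\|f_j\|_{L^p(\T)}\lesssim 2^{-j}M_p(f',r_j)$ for $j\ge1$. The multiplier $n\mapsto \varphi_j(n)\,r_j^{-n+1}/n$ applied to $z f'(r_j z)$ has a smooth symbol adapted to the scale $2^j$: on $n\sim2^j$ we have $r_j^{-n}\simeq 1$ with derivatives controlled at that scale. Hence it maps $L^p$ to $L^p$ with norm $\lesssim 2^{-j}$. Summing the $p$-th powers over $j$ gives $\|f^*\|_{B^0_{p,p}}^p\lesssim |f(0)|^p+|a_1|^p+\sum_j2^{-jp}M_p^p(f',r_j)$, and the low-order terms are bounded by $\|f\|^p_{\calD^p_{p-1}}$ trivially.

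\textbf{Step 2 (upper bound, the main obstacle).} We need $M_p(f',r_k)\lesssim \sum_j 2^j r_k^{c2^j}\|f_j\|_p$. The bound $\|(f_j)'(r_k\cdot)\|_p\lesssim 2^j e^{-c2^{j-k}}\|f_j\|_p$ follows from the same multiplier argument, now applied to the symbol $n\tilde\varphi_j(n) r_k^{n}$. The exponential decay holds for $j\ge k$, while for $j<k$ we use the crude bound $2^j$. This gives
$$
2^{-k}M_p(f',r_k)\lesssim \sum_{j} 2^{j-k}\min\bigl(1,e^{-c2^{j-k}}\bigr)\|f_j\|_p .
$$
The kernel $K(m)=2^{m}\min(1,e^{-c2^m})$, $m=j-k$, is summable over $m\in\Z$: it is geometric for $m<0$ and superexponentially small for $m>0$. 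Young's inequality on $\ell^p(\Z)$ then yields $\sum_k 2^{-kp}M_p^p(f',r_k)\lesssim\sum_j\|f_j\|_p^p$. The delicate points are the uniformity of the multiplier bounds in $j$ and $k$ and the precise mixing of scales. For the multiplier bounds we will verify the symbols' derivative conditions directly, using $|\partial_\xi^\ell (r^{\xi})|\lesssim (\log(1/r))^\ell r^\xi$.

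\textbf{Extension.} Analytic polynomials are dense in $\calD^p_{p-1}$, via dilations $f_r$ followed by Taylor truncation. They are also dense in $B^0_{p,p,+}$, since $\sum_{j\le J}\Delta_j h$ converges and is a polynomial. Hence $f\mapsto f^*$ extends to an isomorphism. The Fourier-coefficient identity passes to the limit because convergence in $B^0_{p,p}$ implies convergence in $\mathcal D'(\T)$, hence convergence of each coefficient. Convergence in $\calD^p_{p-1}$ implies locally uniform convergence, hence convergence of each Taylor coefficient.
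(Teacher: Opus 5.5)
Your proposal is correct and follows essentially the paper's route: Littlewood--Paley blocks, Fourier multiplier bounds linking $\Delta_j f^*$ to $f'$ on circles of radius $\approx 1-2^{-j}$ (inverting $n r^{n}$ for one inequality, and using the decay of $n r^{n}$ for the other), a summable scale-mixing kernel handled by Young's inequality, and density of polynomials plus coefficient-wise convergence for the extension. The only difference is cosmetic. You first discretize the weighted integral at the single radii $r_k=1-2^{-k}$, using the monotonicity of $M_p(f',\cdot)$ and an index shift. The paper instead proves its multiplier bounds uniformly for all $r$ in each annulus $[1-2^{-n},1-2^{-n-1})$, integrates over that annulus, and treats $r\le 1/2$ separately.
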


\begin{proof}

We divide the proof into several steps. 

\vspace{0.1cm}

\noindent\textbf{Step I.} Let $f$ be an analytic polynomial of the form \eqref{20260814eq03}. We first prove that
\begin{equation}\label{20260816eq45}
|f(0)|^p+\int_\D|f'(z)|^p(1-|z|^2)^{p-1}\,dA(z)\lesssim\sum_{j=0}^\infty\|\Delta_jf^*\|_{L^p(\T)}^p.
\end{equation}
To begin with, since $\varphi_0(0)=\chi(0)=1$, we have 
$$
|f(0)|=\left| \widehat{\Delta_0 f^*}(0) \right|=\left|\int_0^1 \Delta_0 f^*(t)dt \right| \le \left\|\Delta_0 f^* \right\|_{L^p(\T)},
$$
which implies 
$$
|f(0)|^p \lesssim \textrm{RHS of \eqref{20260816eq45}}. 
$$
Therefore, it suffices to prove 
\begin{equation} \label{20260816eq46}
\int_\D|f'(z)|^p(1-|z|^2)^{p-1}\,dA(z)\lesssim\sum_{j=0}^\infty\|\Delta_jf^*\|_{L^p(\T)}^p.
\end{equation} 
We have the following claim: for any $j \ge 0$ and $1/2 \le r<1$, one has
\begin{equation}\label{20260814claim01}
\|R(\Delta_jf)(re^{2\pi i(\cdot)})\|_{L^p(\T)}\lesssim2^je^{-c2^j(1-r)}\|\Delta_jf^*\|_{L^p(\T)}, 
\end{equation}
where $c>0$ is an absolute constant, $Rf(z):=zf'(z)$ denotes the radial derivative of $f$, and
\begin{equation} \label{20260814eq01}
\Delta_jf(z):=\sum_{k\geq0}\varphi_j(k)a_kz^k \qquad \textrm{for} \qquad f(z)=\sum_{k \ge 0} a_kz^k. 
\end{equation}
In particular, $(\Delta_jf)^*=\Delta_jf^*$ for every analytic polynomial $f$. 

\vspace{0.1cm}
We first prove \eqref{20260814claim01} for the case when $j=0$. Observe that the support condition on $\varphi_0$ gives
\begin{equation} \label{20260817eq05}
\Delta_0f^*(t)=a_0+a_1e^{2\pi it}
\quad\text{and}\quad
R(\Delta_0f)(re^{2\pi it})=ra_1e^{2\pi it}.
\end{equation} 
Therefore,
$$
\|R(\Delta_0f)(re^{2\pi i(\cdot)})\|_{L^p(\T)} \simeq |a_1|=|\widehat{\Delta_0f^*}(1)|\leq\|\Delta_0f^*\|_{L^p(\T)}.
$$
Since $e^{-c(1-r)}$ is bounded below for $1/2\leq r<1$, \eqref{20260814claim01} also holds for $j=0$.

\vspace{0.1cm}

Next, we show \eqref{20260814claim01} for $j\geq1$. In this case, recall that the Fourier support of $\Delta_jf^*$ is contained in $\{n\in\Z:2^{j-1}\le n \le 2^{j+1}\}$. Choose a smooth function\footnote{For example, one may take $\psi_j(\xi):=\psi(2^{-j}\xi)$, where $\psi\in C_c^\infty(\R)$ satisfies $\supp \ \psi \subseteq [1/4,4]$, $0 \le \psi \le 1$, and $\psi=1$ on $[1/2,2]$.} $\psi_j \in C_c^\infty(\R)$ such that
\begin{enumerate}
\item [$\bullet$] $\supp \ \psi_j \subseteq \{\xi\in\R:2^{j-2}\le \xi \le 2^{j+2}\}$;
    \item [$\bullet$] $\psi_j(\xi)=1, \ \xi \in \supp \ \varphi_j \cap (0, \infty)$;
    \item [$\bullet$] $|\psi_j'(\xi)| \lesssim 2^{-j},\;  \xi \in \R$.
\end{enumerate}
By \eqref{20260814eq01}, 
\begin{align*}
R(\Delta_jf)(re^{2\pi it})
&=\sum_{k\ge 0}kr^k\varphi_j(k)a_ke^{2\pi ikt} \\
&=\sum_{k \ge 0} kr^k \psi_j(k) \cdot  \varphi_j(k)a_ke^{2\pi ikt} \\
&=\sum_{k \ge 0} m_{j, r}(k) \cdot  \varphi_j(k)a_ke^{2\pi ikt} \\
&= T_{m_{j, r}}(\Delta_j f^*), 
\end{align*} 
where $T_{m_{j,r}}$ denotes the periodic Fourier multiplier with symbol
$$
m_{j,r}(k):=
\begin{cases}
kr^k\psi_j(k),\qquad k\geq0; \\
\\
0, \qquad \hfill k<0. 
\end{cases} 
$$
We next verify the two hypotheses of the strong Marcinkiewicz multiplier theorem for $T_{m_{j, r}}$.

\vspace{0.1cm}

\noindent \textit{\underline{Claim I}: 
\begin{equation} \label{20260815eq01}
|m_{j, r}(k)| \lesssim 2^j e^{-c2^j(1-r)}
\end{equation} 
for some absolute constant $c>0$.}

\vspace{0.1cm}

Note that it suffices to consider the case when $m_{j,r}(k) \neq 0$, then one has $k\simeq2^j$. Since $\log r\leq-(1-r)$, it follows that
\begin{align*}
|m_{j,r}(k)|=kr^k|\psi_j(k)|\lesssim2^je^{-k(1-r)} \lesssim2^je^{-c2^j(1-r)},
\end{align*}
which gives \eqref{20260815eq01}. 

\vspace{0.1cm}

\noindent \textit{\underline{Claim II}: 
\begin{equation} \label{20260815eq02}
\sum_{k\in\Z}|m_{j,r}(k+1)-m_{j,r}(k)| \lesssim 2^j e^{-c2^j(1-r)}
\end{equation}
for some absolute constant $c>0$.}

\vspace{0.1cm}

It suffices to consider the case where $m_{j,r}(k+1)-m_{j,r}(k)\neq0$. Since $m_{j,r}(k)=0$ for every $k<0$, we may further restrict ourselves to $k\geq0$. The case $k=0$ can be handled directly. Indeed, if
$$
|m_{j,r}(1)-m_{j,r}(0)|=r|\psi_j(1)|\neq0,
$$
then the support condition on $\psi_j$ implies that $j=1$ or $j=2$. Since $\frac12\leq r<1$, it follows in either case that
$$
|m_{j,r}(1)-m_{j,r}(0)|\lesssim e^{-c2^j(1-r)}.
$$
Thus, it remains to consider $k\geq1$, and we consider two cases. 

\vspace{0.1cm}

\noindent $\bullet$ If both $m_{j, r}(k+1), \; m_{j, r}(k) \neq 0$, then 
\begin{align} \label{20260815eq10}
m_{j,r}(k+1)-m_{j,r}(k)
&=(k+1)r^{k+1}\psi_j(k+1)-kr^k\psi_j(k) \nonumber \\
&=\left( \left(k+1 \right)r^{k+1}-kr^k \right) \psi_j(k+1)+kr^k \left(\psi_j(k+1)-\psi_j(k) \right). 
\end{align}
Using the facts that $k \simeq 2^j$, $|\psi_j| \lesssim 1$, and $|\psi'_j| \lesssim 2^{-j}$,  \eqref{20260815eq10} gives 
$$
|m_{j,r}(k+1)-m_{j,r}(k)|\lesssim e^{-c2^j(1-r)}\big(1+2^j(1-r)\big)
$$
for some absolute constant $c>0$. Since there are at most $O(2^j)$ many $k$'s for which the difference $m_{j,r}(k+1)-m_{j,r}(k)$ is nonzero, one has 
\begin{align*}
\sum_{k\in\Z}|m_{j,r}(k+1)-m_{j,r}(k)|
&\lesssim2^je^{-c2^j(1-r)}\big(1+2^j(1-r)\big)\\
&\lesssim2^je^{-c'2^j(1-r)}
\end{align*}
for some absolute constant $c'>0$. This proves \eqref{20260815eq02}.

\vspace{0.1cm}

\noindent $\bullet$ If $m_{j,r}(k+1)=0$, then $\psi_j(k+1)=0$. Hence
\begin{align}
|m_{j,r}(k+1)-m_{j,r}(k)|
&=|m_{j,r}(k)|=kr^k|\psi_j(k)|\nonumber\\
&=kr^k|\psi_j(k+1)-\psi_j(k)|\nonumber\\
&\lesssim kr^k2^{-j}\simeq r^k\nonumber\\
&\lesssim e^{-c2^j(1-r)}
\label{20260815eq20}
\end{align}
for some absolute constant $c>0$. Similarly, if $m_{j,r}(k)=0$, the same argument yields \eqref{20260815eq20}. Finally, for each fixed $j$, there are only $O(1)$ integers $k$ for which exactly one of $m_{j,r}(k)$ and $m_{j,r}(k+1)$ vanishes. Therefore, summing \eqref{20260815eq20} over all such $k$ proves \eqref{20260815eq02} in this case.

\vspace{0.1cm}

The proof of \emph{Claim II} is complete. 

\vspace{0.15cm}

Consequently, \eqref{20260815eq01} and \eqref{20260815eq02}, together with the strong Marcinkiewicz multiplier theorem \cite[Theorem~8.2.1]{EdwardsGaudry1977} give
$$
\|T_{m_{j,r}}h\|_{L^p(\T)}\lesssim_p2^je^{-c2^j(1-r)}\|h\|_{L^p(\T)},
$$
for some absolute constant $c>0$. Finally, applying this estimate with $h=\Delta_jf^*$ proves \eqref{20260814claim01} for every $j\geq1$.

\medskip

Having established \eqref{20260814claim01}, we now return to the proof of the main estimate \eqref{20260816eq46}. For $0\leq r<1$, write
$$
M_p(f',r):=\left(\int_0^1|f'(re^{2\pi it})|^p\,dt\right)^{1/p}.
$$
Hence, using polar coordinates, we have
\begin{align} \label{20260816eq43B}
\textrm{LHS of \eqref{20260816eq46}}
&=2\int_0^1M_p(f',r)^p(1-r^2)^{p-1}r\,dr \nonumber \\
&\lesssim\int_0^{1/2}M_p(f',r)^p\,dr+\int_{1/2}^1M_p(f',r)^p(1-r)^{p-1}\,dr \nonumber \\
&:=\mathfrak L_1+\mathfrak L_2,
\end{align}
where 
$$
\mathfrak L_1:=\int_0^{1/2}M_p(f',r)^p\,dr
$$
and 
$$
\mathfrak L_2:=\int_{1/2}^1M_p(f',r)^p(1-r)^{p-1}\,dr.
$$

\medskip 

\noindent \textbf{Estimate of $\mathfrak L_1$.}  Since $|f'|^p$ is subharmonic, the function $r\mapsto M_p(f',r)$ is nondecreasing. Therefore, for every $0\leq r\leq1/2$,
\begin{equation} \label{20260816eq01}
M_p(f',r) \leq M_p\left(f',\frac12\right)
=2\left\|Rf\left(\frac12e^{2\pi i(\cdot)}\right)\right\|_{L^p(\T)}.
\end{equation}
Since $\sum_{j\geq0}\varphi_j(k)=1$ for every $k\in\Z$, we have
\begin{equation} \label{20260816eq02}
f=\sum_{j=0}^\infty\Delta_jf
\qquad\text{and}\qquad
Rf=\sum_{j=0}^\infty R(\Delta_jf).
\end{equation} 
Note that the above sums are finite because $f$ is a polynomial. Hence, by \eqref{20260814claim01}, \eqref{20260816eq01}, \eqref{20260816eq02}, and H\"older
\begin{align*}
M_p(f',r)^p
& \lesssim \left\|Rf\left(\frac12e^{2\pi i(\cdot)}\right)\right\|^p_{L^p(\T)} \\
&\lesssim \left(\sum_{j=0}^\infty\left\|R(\Delta_jf)\left(\frac12e^{2\pi i(\cdot)}\right)\right\|_{L^p(\T)} \right)^p \\
&\lesssim \left(\sum_{j=0}^\infty2^je^{-c2^j}\|\Delta_jf^*\|_{L^p(\T)} \right)^p \\
&\lesssim \sum_{j=0}^\infty\|\Delta_jf^*\|_{L^p(\T)}^p,
\end{align*}
where in the last estimate above, we have used the fact that $\{2^j e^{-c2^j}\}_{j \ge 0} \in \ell^{\frac{p}{p-1}}$. Moreover, the implicit constant above is independent of $r\in[0,1/2]$. Integrating the above estimate over this interval, we obtain
\begin{equation}\label{20260816eq43A}
\mathfrak L_1\lesssim\sum_{j=0}^\infty\|\Delta_jf^*\|_{L^p(\T)}^p.
\end{equation}

\medskip

\noindent \textbf{Estimate of $\mathfrak L_2$.} Since $r\geq1/2$, we have
$$
|f'(re^{2\pi it})|=r^{-1}|Rf(re^{2\pi it})|\leq2|Rf(re^{2\pi it})|.
$$
Using the decomposition $Rf=\sum_{j\geq0}R(\Delta_jf)$ and \eqref{20260814claim01}, we obtain, for any $r\in A_n:=[1-2^{-n}, 1-2^{-n-1})$,
\begin{align*}
M_p(f',r)
& \lesssim \|Rf(re^{2\pi i(\cdot)})\|_{L^p(\T)} \\
&\lesssim\sum_{j=0}^\infty\|R(\Delta_jf)(re^{2\pi i(\cdot)})\|_{L^p(\T)}\\
&\lesssim\sum_{j=0}^\infty2^je^{-c2^j(1-r)}\|\Delta_jf^*\|_{L^p(\T)}\\
&\lesssim\sum_{j=0}^\infty2^je^{-c2^{j-n}}\|\Delta_jf^*\|_{L^p(\T)}.
\end{align*}
Since $|A_n|\simeq2^{-n}$ and $(1-r)^{p-1}\simeq2^{-n(p-1)}$ on $A_n$, it follows that
\begin{align}
\int_{A_n}M_p(f',r)^p(1-r)^{p-1}\,dr
&\lesssim2^{-np}\left(\sum_{j=0}^\infty2^je^{-c2^{j-n}}\|\Delta_jf^*\|_{L^p(\T)}\right)^p\nonumber\\
&=\left(\sum_{j=0}^\infty2^{j-n}e^{-c2^{j-n}}\|\Delta_jf^*\|_{L^p(\T)}\right)^p.
\label{20260816eq50}
\end{align}
To sum these estimates over $n$, define $\|\Delta_jf^*\|_{L^p(\T)}:=0$ if $j<0$. Then by \eqref{20260816eq50} and Young's inequality, 
\begin{align} \label{20260816eq43}
\mathfrak L_2
&=\sum_{n=1}^\infty \int_{A_n}M_p(f',r)^p(1-r)^{p-1}\,dr \nonumber \\
& \lesssim \sum_{n=1}^\infty \left(\sum_{j=0}^\infty2^{j-n}e^{-c2^{j-n}}\|\Delta_jf^*\|_{L^p(\T)}\right)^p \nonumber \\
& \le \sum_{n \in \Z} \left(\sum_{j \in \Z} 2^{j-n}e^{-c2^{j-n}}\|\Delta_jf^*\|_{L^p(\T)}\right)^p \nonumber \\
&= \left\| \left\{ \left\{2^{-\nu} e^{-c2^{-\nu}} \right\}_{\nu \in \Z} * \left\{ \|\Delta_jf^*\|_{L^p(\T)} \right\}_{j \in \Z} \right\} \right\|_{\ell^p}^p \nonumber \\
& \le \left(\sum_{\nu \in \Z} 2^{-\nu}e^{-c2^{-\nu}} \right)^p \cdot \left(\sum_{j=0}^\infty \|\Delta_jf^*\|_{L^p(\T)}^p \right)  \nonumber \\
& \lesssim \sum_{j=0}^\infty \|\Delta_jf^*\|_{L^p(\T)}^p. 
\end{align}
Here, in the last estimate above, we have used the fact that
$$
\sum_{\nu \in \Z} 2^{-\nu}e^{-c2^{-\nu}}  \le \sum_{\nu \ge 0} 2^{-\nu}+\sum_{\nu<0} 2^{-\nu}e^{-c2^{-\nu}} <+\infty. 
$$
The estimate of $\mathfrak L_2$ is complete. 

\medskip 

Finally, plugging \eqref{20260816eq43A} and \eqref{20260816eq43} back to \eqref{20260816eq43B}, we conclude that
$$
\int_\D|f'(z)|^p(1-|z|^2)^{p-1}\,dA(z) \lesssim \mathfrak L_1+\mathfrak L_2\lesssim\sum_{j=0}^\infty\|\Delta_jf^*\|_{L^p(\T)}^p.
$$
This proves the desired estimate \eqref{20260816eq46}, and hence completes the proof of  
\eqref{20260816eq45}.

\medskip
\noindent\textbf{Step II.} We next prove the reverse inequality
\begin{equation}\label{20260816eq54}
\sum_{j=0}^\infty\|\Delta_jf^*\|_{L^p(\T)}^p\lesssim |f(0)|^p+\int_\D|f'(z)|^p(1-|z|^2)^{p-1}\,dA(z).
\end{equation}
Write
\begin{equation}\label{20260817eq10}
\textrm{LHS of \eqref{20260816eq54}} =\|\Delta_0f^*\|_{L^p(\T)}^p+\sum_{n=1}^\infty\|\Delta_nf^*\|_{L^p(\T)}^p.
\end{equation}

We first estimate the term corresponding to $j=0$ in \eqref{20260817eq10}. Recall from \eqref{20260817eq05} that 
$$
\Delta_0f^*(t)=a_0+a_1e^{2\pi it},
$$
and hence by submean-value property, 
\begin{align}\label{20260817eq12}
\|\Delta_0f^*\|_{L^p(\T)}^p
& \lesssim |a_0|^p+|a_1|^p \nonumber \\
&=|f(0)|^p+|f'(0)|^p \nonumber \\
& \lesssim |f(0)|^p+\int_{|z|<1/2}|f'(z)|^p\,dA(z) \nonumber \\ 
& \lesssim |f(0)|^p+\int_\D|f'(z)|^p(1-|z|^2)^{p-1}\,dA(z).
\end{align}

We next estimate the terms corresponding to $n\geq1$ in \eqref{20260817eq10}. Fix $n\geq1$, $r\in A_n=\left[1-2^{-n},1-2^{-n-1}\right)$, and define $T_{\widetilde{m}_{n, r}}$ to be the periodic Fourier multiplier with symbol
$$
\widetilde m_{n,r}(\xi):=
\begin{cases}
\dfrac{2^n\varphi_n(\xi)}{\xi r^\xi},&\xi>0,\\[6pt]
0,&\xi\leq0.
\end{cases}
$$
Since $\varphi_n$ is supported away from the origin, $\widetilde m_{n,r}$ is smooth. We shall verify that $\widetilde m_{n,r}$ satisfies the hypotheses of the strong Marcinkiewicz multiplier theorem, with bounds uniform in $n\geq1$ and $r\in A_n$.

\vspace{0.1cm}

\noindent\textit{\underline{Claim III}:
\begin{equation}\label{20260817eq01}
|\widetilde m_{n,r}(\xi)|\lesssim1,\qquad \xi\in\R.
\end{equation}}

\vspace{0.1cm}

It suffices to consider $\xi\in\supp \ \widetilde m_{n,r}$, for which $\xi\simeq2^n$. Moreover, since $r\in A_n$, a direct computation gives 
\begin{equation} \label{20260817eq30}
r^\xi \simeq 1,
\end{equation} 
uniformly in $n, r$ and $\xi$. Therefore, 
$$
|\widetilde m_{n,r}(\xi)| \lesssim \frac{2^n|\varphi_n(\xi)|}{\xi r^\xi}\lesssim1,
$$
which proves \eqref{20260817eq01}.

\vspace{0.1cm}

\noindent\textit{\underline{Claim IV}:
\begin{equation}\label{20260817eq02}
\sum_{k\in\Z}|\widetilde m_{n,r}(k+1)-\widetilde m_{n,r}(k)|\lesssim1.
\end{equation}}

\vspace{0.1cm}

For $\xi>0$, a direct computation gives
\begin{align*}
\widetilde m_{n,r}'(\xi)
&=2^nr^{-\xi}\left(\frac{\varphi_n'(\xi)}{\xi}-\frac{\varphi_n(\xi)}{\xi^2}-\frac{\log r}{\xi}\varphi_n(\xi)\right).
\end{align*}
Since $|\varphi_n'(\xi)|\lesssim 2^{-n}$, and $|\log r|\lesssim1-r\lesssim2^{-n}$ for $r \in A_n$, the above calculation together with \eqref{20260817eq30} gives
\begin{align*}
|\widetilde m_{n,r}'(\xi)|
&\lesssim2^n\left(\frac{1}{2^{2n}}+\frac{1}{2^{2n}}+\frac{1}{2^{2n}}\right)
\lesssim2^{-n}.
\end{align*}
Since $\supp \ \widetilde m_{n,r}$ is contained in an interval of length $O(2^n)$, it follows that
\begin{align*}
\sum_{k\in\Z}|\widetilde m_{n,r}(k+1)-\widetilde m_{n,r}(k)|
&\leq\sum_{k\in\Z}\int_k^{k+1}|\widetilde m_{n,r}'(\xi)|\,d\xi\\
&=\int_\R|\widetilde m_{n,r}'(\xi)|\,d\xi \lesssim 1.
\end{align*}
This proves \eqref{20260817eq02}.

\vspace{0.1cm}

As a consequence, \eqref{20260817eq01} and \eqref{20260817eq02} verify the hypotheses of the strong Marcinkiewicz multiplier theorem \cite[Theorem~8.2.1]{EdwardsGaudry1977}. It follows that
\begin{equation}\label{20260816eq55}
\|T_{\widetilde m_{n,r}}h\|_{L^p(\T)}\lesssim_p\|h\|_{L^p(\T)},
\end{equation}
uniformly for $n\geq1$ and $r\in A_n$.

\vspace{0.1cm}

We apply \eqref{20260816eq55} to the function
$$
Rf(re^{2\pi it})=\sum_{k\geq1}kr^ka_ke^{2\pi ikt}.
$$
By the definition of $\widetilde m_{n,r}$,
\begin{align*}
T_{\widetilde m_{n,r}}\big(Rf(re^{2\pi i(\cdot)})\big)(e^{2\pi it})
&=\sum_{k\geq1}\widetilde m_{n,r}(k)kr^ka_ke^{2\pi ikt}  \\
&=2^n\sum_{k\geq1}\varphi_n(k)a_ke^{2\pi ikt}\\
&=2^n\Delta_nf^*(t).
\end{align*}
Consequently, by \eqref{20260816eq55}, 
\begin{align*}
2^n\|\Delta_nf^*\|_{L^p(\T)}
&=\left\| T_{\widetilde m_{n,r}}\big(Rf(re^{2\pi i(\cdot)})\big)(e^{2\pi i(\cdot)}) \right\|_{L^p(\T)} \nonumber   \\
&\lesssim\|Rf(re^{2\pi i(\cdot)})\|_{L^p(\T)}\nonumber\\
&=rM_p(f',r) \leq M_p(f',r),
\end{align*}
and hence
\begin{equation}\label{20260817eq15}
\|\Delta_nf^*\|_{L^p(\T)}\lesssim2^{-n}M_p(f',r), \qquad \forall r \in A_n.
\end{equation}
Therefore, 
\begin{align}
\|\Delta_nf^*\|_{L^p(\T)}^p
&=\frac{1}{|A_n|}\int_{A_n}\|\Delta_nf^*\|_{L^p(\T)}^p\,dr\nonumber\\
&\lesssim\frac{2^{-np}}{|A_n|}\int_{A_n}M_p(f',r)^p\,dr\nonumber\\
&\simeq2^{-n(p-1)}\int_{A_n}M_p(f',r)^p\,dr\nonumber\\
&\simeq\int_{A_n}M_p(f',r)^p(1-r)^{p-1}\,dr,
\label{20260817eq16}
\end{align}
where we have used $|A_n|\simeq2^{-n}$ and $1-r\simeq2^{-n}$ on $A_n$. Summing \eqref{20260817eq16} over $n\geq1$, we obtain
\begin{align}
\sum_{n=1}^\infty\|\Delta_nf^*\|_{L^p(\T)}^p
&\lesssim\sum_{n=1}^\infty\int_{A_n}M_p(f',r)^p(1-r)^{p-1}\,dr\nonumber\\
&=\int_{1/2}^1M_p(f',r)^p(1-r)^{p-1}\,dr \nonumber \\
&\lesssim\int_{1/2}^1M_p(f',r)^p(1-r^2)^{p-1}r\,dr\nonumber\\
&\lesssim\int_\D|f'(z)|^p(1-|z|^2)^{p-1}\,dA(z).  \label{20260817eq19}
\end{align}
Finally, substituting \eqref{20260817eq12} and \eqref{20260817eq19} into \eqref{20260817eq10} gives
$$
\sum_{j=0}^\infty\|\Delta_jf^*\|_{L^p(\T)}^p\lesssim |f(0)|^p+\int_\D|f'(z)|^p(1-|z|^2)^{p-1}\,dA(z).
$$
This proves \eqref{20260816eq54}. Together with {\bf Step I}, the proof of \eqref{20260818eq01} is complete. 

\medskip
\noindent\textbf{Step III.} It remains to establish \eqref{20260818eq02}. We first show that analytic trigonometric polynomials are dense in $B_{p,p,+}^0(\T)$. Let $h\in B_{p,p,+}^0(\T)$ and set
$$
h_N:=\sum_{j=0}^N\Delta_jh.
$$
Since each $\Delta_jh$ has finite Fourier support and $h$ has no negative Fourier coefficients, $h_N$ is an analytic trigonometric polynomial. By the finite overlap of the supports of the functions $\{\varphi_j\}_{j\geq0}$ and the uniform $L^p(\T)$-boundedness of the corresponding smooth Fourier multipliers,
$$
\|h-h_N\|_{B_{p,p}^0(\T)}^p
\lesssim\sum_{j\geq N-C}\|\Delta_jh\|_{L^p(\T)}^p\longrightarrow0
$$
as $N\to\infty$, where $C>0$ is an absolute constant. Thus, analytic trigonometric polynomials are dense in $B_{p,p,+}^0(\T)$.

It now follows from \eqref{20260818eq01} and the density of analytic polynomials in $\calD_{p-1}^p$ that the map $f\mapsto f^*$ extends uniquely to a bounded linear map
$$
\operatorname{Tr}:\calD_{p-1}^p\longrightarrow B_{p,p,+}^0(\T).
$$
The reverse inequality in \eqref{20260818eq01} shows that $\operatorname{Tr}$ is bounded below, and hence is injective and has closed range. Moreover, $\textrm{ran}(\operatorname{Tr})$ contains every analytic trigonometric polynomial. Since such polynomials are dense in $B_{p,p,+}^0(\T)$, we conclude that $\textrm{ran}(\operatorname{Tr})=B_{p,p,+}^0(\T)$. Therefore, $\operatorname{Tr}$ is an isomorphism whose inverse is also bounded.

Finally, let
$$
f(z)=\sum_{n=0}^\infty a_nz^n\in\calD_{p-1}^p,
$$
and choose analytic polynomials
$$
f_k(z)=\sum_{n=0}^{N_k}a_n^{(k)}z^n
$$
such that $f_k\to f$ in $\calD_{p-1}^p$. By the continuity of $\operatorname{Tr}$,
$$
\operatorname{Tr}f_k\longrightarrow\operatorname{Tr}f
\qquad\textrm{in}\qquad B_{p,p}^0(\T).
$$
For each fixed $n\in\Z$, observe that the functional $h\mapsto\widehat h(n)$ is continuous on $B_{p,p}^0(\T)$. Indeed, one may choose $j\geq0$ such that $\varphi_j(n)\neq0$, and then
$$
|\widehat h(n)|
=\frac{|\widehat{\Delta_jh}(n)|}{|\varphi_j(n)|}
\leq\frac{\|\Delta_jh\|_{L^1(\T)}}{|\varphi_j(n)|}
\lesssim_n\|h\|_{B_{p,p}^0(\T)}.
$$
It follows that
\begin{equation}\label{20260818eq20}
\widehat{\operatorname{Tr}f_k}(n)\longrightarrow\widehat{\operatorname{Tr}f}(n)
\qquad \forall n\in\Z.
\end{equation}

On the other hand, since norm convergence in $\calD_{p-1}^p$ implies locally uniform convergence on $\D$, by Cauchy's formula, we have
$$
a_n^{(k)}\longrightarrow a_n
\qquad \forall n\geq0.
$$
Since
$$
\widehat{\operatorname{Tr}f_k}(n)=
\begin{cases}
a_n^{(k)},&0\leq n\leq N_k,\\
\\
0,&n<0,
\end{cases}
$$
passing to the limit in \eqref{20260818eq20} gives
$$
\widehat{\operatorname{Tr}f}(n)=
\begin{cases}
a_n,&n\geq0,\\
\\
0,&n<0.
\end{cases}
$$
These Fourier coefficients determine $\operatorname{Tr}f$ uniquely as a distribution on $\T$. This completes the proof.
\end{proof}

Having shown the equivalence between $\calD_{p-1}^p$ and $B_{p, p,+}^0(\T)$, we now turn to the main result of this section. For $h\in\mathcal D'(\T)$, define its Poisson extension by
\begin{equation}\label{20260819eq00}
P[h](re^{2\pi it})
:=
\sum_{n\in\Z}r^{|n|}\widehat h(n)e^{2\pi int},
\qquad 0\leq r<1.
\end{equation}

\begin{prop}\label{20260819prop01}
Let $1<p<\infty$, and let $\mu$ be a positive Borel measure on $\D$. Then \eqref{Dirichletembedding01} holds if and only if
\begin{equation}\label{20260819eq01}
\int_\D|P[h](z)|^p\,d\mu(z)
\leq C\|h\|_{B_{p,p}^0(\T)}^p, \qquad h \in B_{p, p}^0(\T).
\end{equation}
Moreover, if $\widetilde{C}_{best}$ denotes the optimal constant in \eqref{20260819eq01}, then
$$
\widetilde{C}_{best}\simeq_p C_{best},
$$
where $C_{best}$ is the optimal constant in \eqref{Dirichletembedding01}, as defined in Theorem~\ref{20260813thmmain}.
\end{prop}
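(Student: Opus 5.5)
The plan is to split $h\in B_{p,p}^0(\T)$ into its analytic and antianalytic parts, and to identify the Poisson extension of each part with a function in $\calD_{p-1}^p$ (or its conjugate). Write $h=h_++h_-$, where $h_+:=\sum_{n\ge0}\widehat h(n)e^{2\pi in(\cdot)}$ and $h_-:=\sum_{n<0}\widehat h(n)e^{2\pi in(\cdot)}$. Then
$$
P[h](z)=F(z)+\overline{G(z)},\qquad F(z):=\sum_{n\ge0}\widehat h(n)z^n,\quad G(z):=\sum_{n\ge1}\overline{\widehat h(-n)}z^n .
$$

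\textbf{Step 1: boundedness of the Riesz projection on $B_{p,p}^0(\T)$.} I will show that $\|h_\pm\|_{B_{p,p}^0(\T)}\lesssim_p\|h\|_{B_{p,p}^0(\T)}$. The reason is that $\Delta_j h_+$ is the Riesz (Szeg\H{o}) projection of $\Delta_j h$, and the Riesz projection is bounded on $L^p(\T)$ for $1<p<\infty$. Equivalently, one can invoke the Marcinkiewicz multiplier theorem \cite[Theorem~8.2.1]{EdwardsGaudry1977} with the symbol $\one_{[0,\infty)}$, exactly as in the proof of Proposition~\ref{20260813eq01}.

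Conjugation also preserves the norm: the reflected distribution $\tilde g(t):=\overline{g(-t)}$ has Fourier coefficients $\overline{\widehat g(n)}$. Since $\varphi_j$ is even and real, $\Delta_j\tilde g=\widetilde{\Delta_j g}$, so $\|\tilde g\|_{B_{p,p}^0}=\|g\|_{B_{p,p}^0}$. It follows that $\widetilde{h_-}$ lies in $B_{p,p,+}^0(\T)$ with norm $\lesssim_p\|h\|_{B_{p,p}^0}$.

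\textbf{Step 2: $\eqref{Dirichletembedding01}\Rightarrow\eqref{20259819eq01}$, i.e.\ \eqref{20260819eq01}.} By Proposition~\ref{20260813eq01}, $F=\operatorname{Tr}^{-1}h_+$ and $G=\operatorname{Tr}^{-1}(\widetilde{h_-})$, and both have $\calD_{p-1}^p$ norm $\lesssim_p\|h\|_{B_{p,p}^0}$. Then
$$
\int|P[h]|^p\,d\mu\le 2^{p-1}\left(\int|F|^p\,d\mu+\int|G|^p\,d\mu\right)\lesssim_p C_{best}\,\|h\|^p_{B_{p,p}^0}.
$$

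One point needs care: the identity $P[h]=F+\overline G$ pointwise on $\D$ for a general distribution $h$. This holds because the Fourier coefficients of $h\in B_{p,p}^0$ have at most polynomial growth, so the series in \eqref{20260819eq00} converges absolutely and locally uniformly on $\D$. The same fact gives $P[\operatorname{Tr} f]=f$ for $f\in\calD_{p-1}^p$, by Proposition~\ref{20260813eq01}.

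\textbf{Step 3: $\eqref{20260819eq01}\Rightarrow\eqref{Dirichletembedding01}$.} Given $f\in\calD_{p-1}^p$, take $h=\operatorname{Tr} f$. Then $P[h]=f$ and $\|h\|_{B_{p,p}^0}\simeq_p\|f\|_{\calD_{p-1}^p}$, so $C_{best}\lesssim_p\widetilde C_{best}$. Combining Steps 2 and 3 gives $\widetilde C_{best}\simeq_p C_{best}$.

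The main obstacle is Step 1, namely the boundedness of the analytic projection on $B_{p,p}^0$. Once that is checked via the Marcinkiewicz theorem applied block by block, the remaining steps are bookkeeping.
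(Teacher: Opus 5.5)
Your route is the same as the paper's. You split $h=h_++h_-$, bound the analytic projection block by block via the Marcinkiewicz multiplier theorem, write $P[h]=F+\overline{G}$ with $F,G\in\calD_{p-1}^p$, and obtain the easy direction from $P[\operatorname{Tr}f]=f$.

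There is one incorrect step, at the end of Step 1. By your own computation, the involution $\tilde g(t):=\overline{g(-t)}$ has coefficients $\widehat{\tilde g}(n)=\overline{\widehat g(n)}$, so it does not flip the spectrum. Hence $\widetilde{h_-}$ is still supported on $n<0$ and is not in $B_{p,p,+}^0(\T)$. Consequently $\operatorname{Tr}^{-1}(\widetilde{h_-})$ is undefined, and it certainly is not your $G$.

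The object you actually need is the plain conjugate $\overline{h_-}$. Its coefficients are $\widehat{\overline{h_-}}(n)=\overline{\widehat h(-n)}$ for $n\ge 1$ and $0$ otherwise, which is exactly $\operatorname{Tr}G$ for the $G$ you wrote down. Its Besov norm equals $\|h_-\|_{B_{p,p}^0(\T)}$ because $\varphi_j$ is real and even, so $\Delta_j\overline{g}=\overline{\Delta_j g}$. Evenness is genuinely needed here, whereas your $\tilde g$ commutes with $\Delta_j$ using realness alone. This is precisely what the paper does.

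With $\widetilde{h_-}$ replaced by $\overline{h_-}$, the rest of your argument goes through as written, including the pointwise identity $P[h]=F+\overline{G}$ on $\D$.
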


\begin{proof}
Suppose first that \eqref{20260819eq01} holds, and we have to show \eqref{Dirichletembedding01}, that is
$$
\int_{\D} |f(z)|^p d\mu(z) \lesssim \left\|f\right\|_{\calD_{p-1}^p}^p, \qquad f \in \calD_{p-1}^p.
$$
Let
$$
f(z)=\sum_{n=0}^\infty a_nz^n\in\calD_{p-1}^p
$$
and hence by Proposition~\ref{20260813eq01}, $f^*=\operatorname{Tr} f \in B_{p,p,+}^0(\T)$ with
$$
\widehat{f^*}(n)=
\begin{cases}
a_n,&n\geq0,\\
\\
0,&n<0.
\end{cases}
$$
Therefore, 
\begin{equation} \label{20260819eq40}
P[f^*]=f. 
\end{equation} 
Indeed, for $z=re^{2\pi it}\in\D$, one has
$$
P[f^*](re^{2\pi it})=\sum_{n=0}^\infty r^na_ne^{2\pi int}=\sum_{n=0}^\infty a_n\left(re^{2\pi it}\right)^n=f(re^{2\pi it}).
$$
Now, using \eqref{20260819eq01} and Proposition~\ref{20260813eq01}, we obtain
\begin{align*}
\int_\D|f(z)|^p\,d\mu(z)
&=\int_\D|P[f^*](z)|^p\,d\mu(z)\\
&\leq\widetilde{C}_{best}\|f^*\|_{B_{p,p}^0(\T)}^p\\
&\lesssim_p\widetilde{C}_{best}\|f\|_{\calD_{p-1}^p}^p.
\end{align*}
Thus, \eqref{Dirichletembedding01} holds and
\begin{equation}\label{20260819eq02}
C_{best}\lesssim_p\widetilde{C}_{best}.
\end{equation}

Conversely, suppose that \eqref{Dirichletembedding01} holds, and let $h\in B_{p,p}^0(\T)$. Decompose
$$
h=h_++h_-,
$$
where
$$
\widehat{h_+}(n):=
\begin{cases}
\widehat h(n),&n\geq0,\\
\\
0,&n<0,
\end{cases}
\qquad \textrm{and} \qquad 
\widehat{h_-}(n):=
\begin{cases}
0,&n\geq0,\\
\\
\widehat h(n),&n<0.
\end{cases}
$$
Then
$$
h_+\in B_{p,p,+}^0(\T)
\qquad\textrm{and}\qquad
\overline{h_-}\in B_{p,p,+}^0(\T).
$$
Moreover, observe that another application of the strong Marcinkiewicz multiplier theorem \cite[Theorem~8.2.1]{EdwardsGaudry1977} shows that the projections
$$
\mathcal P_+h:=h_+
\qquad\textrm{and}\qquad
\mathcal P_-h:=h_-
$$
are bounded on $L^p(\T)$. Therefore, 
\begin{align}
\|h_+\|_{B_{p,p}^0(\T)}^p+\|h_-\|_{B_{p,p}^0(\T)}^p
&=\sum_{j=0}^\infty\left(
\|\Delta_jh_+\|_{L^p(\T)}^p+\|\Delta_jh_-\|_{L^p(\T)}^p
\right)\nonumber\\
&=\sum_{j=0}^\infty\left(
\|\Delta_j \mathcal P_+h\|_{L^p(\T)}^p+\|\Delta_j\mathcal P_-h\|_{L^p(\T)}^p
\right)\nonumber\\
&=\sum_{j=0}^\infty\left(
\| \mathcal P_+ \Delta_j h\|_{L^p(\T)}^p+\|\mathcal P_- \Delta_jh\|_{L^p(\T)}^p
\right)\nonumber\\
&\lesssim_p\sum_{j=0}^\infty\|\Delta_jh\|_{L^p(\T)}^p=\|h\|_{B_{p,p}^0(\T)}^p.
\label{20260819eq03}
\end{align}

By Proposition~\ref{20260813eq01}, there exist unique functions $f,g\in\calD_{p-1}^p$ such that
$$
\operatorname{Tr}f=h_+
\qquad\textrm{and}\qquad
\operatorname{Tr}g=\overline{h_-}.
$$
By the same argument as in \eqref{20260819eq40}, we obtain
$$
P[h_+]=f
\qquad\textrm{and}\qquad
P[h_-]=\overline g.
$$
Consequently,
\begin{equation}\label{20260819eq04}
P[h]=f+\overline g.
\end{equation}
Using \eqref{20260819eq04}, \eqref{Dirichletembedding01}, Proposition~\ref{20260813eq01}, and \eqref{20260819eq03}, we obtain
\begin{align*}
\int_\D|P[h](z)|^p\,d\mu(z)
&\lesssim_p\int_\D|f(z)|^p\,d\mu(z)
+\int_\D|g(z)|^p\,d\mu(z)\\
&\leq C_{best}\left(
\|f\|_{\calD_{p-1}^p}^p+\|g\|_{\calD_{p-1}^p}^p
\right)\\
&\lesssim_p C_{best}\left(
\|h_+\|_{B_{p,p}^0(\T)}^p
+\|\overline{h_-}\|_{B_{p,p}^0(\T)}^p
\right)\\
&\lesssim_p C_{best}\|h\|_{B_{p,p}^0(\T)}^p.
\end{align*}
Here, we have used the fact that $\|\overline{h_-}\|_{B_{p,p}^0(\T)}=\|h_-\|_{B_{p,p}^0(\T)}$. Indeed, since the functions $\varphi_j$ are even, a direct computation yields $\Delta_j(\overline{h_-})=\overline{\Delta_j h_{-}}$. Hence, $\left\|\Delta_j(\overline{h_-}) \right\|_{L^p(\T)}=\left\|\Delta_j h_- \right\|_{L^p(\T)}$. The desired claim then follows by taking the $p$-th power on both sides of the preceding equation and summing over $j$.

\vspace{0.1cm}

Therefore, \eqref{20260819eq01} holds and
\begin{equation}\label{20260819eq05}
\widetilde{C}_{best}\lesssim_p C_{best}.
\end{equation}
Combining \eqref{20260819eq02} and \eqref{20260819eq05} completes the proof.
\end{proof}

In view of Proposition~\ref{20260819prop01}, the original Dirichlet embedding problem \eqref{Dirichletembedding01} is equivalent to the boundedness of the Poisson extension 
\begin{equation} \label{Harmonicembedding}
P:B_{p,p}^0(\T)\longrightarrow L^p(\mu).
\end{equation}
This finishes the first reduction.

\bigskip 

\section{Reduction II: from harmonic embedding to Whitney embedding}\label{Sec03}

The goal of this section is to make a second reduction from the harmonic embedding problem \eqref{Harmonicembedding} to a Whitney embedding problem, where the methods of dyadic harmonic analysis enter more naturally.

\subsection{The Whitney model and its tree formulation}\label{20260820subsec01}

For $n\geq0$, let
$$
\calD_n:=\{I\in\calD:|I|=2^{-n}\}
$$
be the $n$-th generation of $\calD$, and let $\mathbb E_n$ denote the conditional expectation with respect to the partition $\calD_n$. Thus, for $h\in L^1(\T)$,
$$
\mathbb E_nh
=
\sum_{I\in\calD_n}
\left(\frac{1}{|I|}\int_Ih(t)\,dt\right)\one_I.
$$
For each $I\in\calD$, define the $L^\infty$-normalized Haar function
$$
h_I:=\one_{I_+}-\one_{I_-},
$$
where we recall that $I_{+}$ and $I_{-}$ are the two dyadic children of $I$, ordered counterclockwise. We say that $h$ is a \emph{Haar polynomial} on $\T$ if it is of the form
\begin{equation}\label{20260820eq01}
h=\widehat h(0)+\sum_{I\in\calD}a_Ih_I, \qquad a_I:=\frac{1}{|I|}\int_Ih(t)h_I(t)\,dt,
\end{equation}
where only finitely many coefficients $a_I$ are nonzero. We also write
$$
{\bf d}_nh:=\mathbb E_{n+1}h-\mathbb E_nh
=\sum_{I\in\calD_n}a_Ih_I.
$$
\begin{lem}\label{20260820lemHaar}
Let $1<p<\infty$. For every Haar polynomial $h$ given by \eqref{20260820eq01}, one has
\begin{equation}\label{20260820eq02}
\|h\|_{B_{p,p}^0(\T)}^p
\simeq_p|\widehat h(0)|^p+\sum_{I\in\calD}|a_I|^p|I|.
\end{equation}
Equivalently,
\begin{equation}\label{20260820eq02A}
\|h\|_{B_{p,p}^0(\T)}^p
\simeq_p|\widehat h(0)|^p+\sum_{n=0}^\infty\|{\bf d}_nh\|_{L^p(\T)}^p.
\end{equation}
Moreover, Haar polynomials are dense in $B_{p,p}^0(\T)$.
\end{lem}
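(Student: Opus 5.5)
The plan is to prove \eqref{20260820eq02} by establishing an almost-orthogonality estimate between the Littlewood--Paley blocks $\Delta_j$ and the martingale differences ${\bf d}_n$. The upper bound then follows from a discrete Young/Schur argument, the lower bound from duality, and density from approximating trigonometric polynomials by their conditional expectations. First note that \eqref{20260820eq02} and \eqref{20260820eq02A} are trivially equivalent. For fixed $n$ the functions $h_I$, $I\in\calD_n$, have disjoint supports $I$ and satisfy $|h_I|=\one_I$, so $\|{\bf d}_nh\|_{L^p(\T)}^p=\sum_{I\in\calD_n}|a_I|^p|I|$.

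\emph{Upper bound.} Write $\Delta_jh=\varphi_j(0)\widehat h(0)+\sum_{n\ge0}\Delta_j{\bf d}_nh$. Only $\Delta_0$ sees the constant, so that term gives $|\widehat h(0)|^p$. The key claim is
$$
\|\Delta_j{\bf d}_nh\|_{L^p(\T)}\lesssim_p 2^{-\varepsilon|j-n|}\|{\bf d}_nh\|_{L^p(\T)},\qquad \varepsilon=\varepsilon(p)>0 .
$$
I would use that $\Delta_j$ is convolution with a periodized kernel $K_j$ satisfying $|K_j(x)|\lesssim 2^j(1+2^j|x|)^{-M}$ and $|K_j'(x)|\lesssim 2^{2j}(1+2^j|x|)^{-M}$, with $\int K_j=0$ for $j\ge1$.

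For $j\le n$, the function ${\bf d}_nh$ has mean zero on each $I\in\calD_n$. Writing $K_j*(a_Ih_I)(x)=a_I\int_I\big(K_j(x-y)-K_j(x-c_I)\big)h_I(y)\,dy$ and using the derivative bound gives $|\Delta_j{\bf d}_nh|\lesssim 2^{j-n}M({\bf d}_nh)$. Boundedness of the maximal function on $L^p$ then gives decay $2^{j-n}$.

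For $j>n$, the function $h_I$ is constant away from its three jump points. Since $\int K_j=0$, the term $K_j*h_I$ is bounded by $\sum_{\text{jumps }x_0}(1+2^j|x-x_0|)^{-M+1}$. The pieces attached to different $I\in\calD_n$ have essentially disjoint bulk, with Schwartz tails controlled by summing over neighbors. This gives $\|\Delta_j{\bf d}_nh\|_p^p\lesssim\sum_{I\in\calD_n}|a_I|^p2^{-j}=2^{n-j}\|{\bf d}_nh\|_p^p$, that is, decay $2^{-(j-n)/p}$.

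Minkowski's inequality together with Young's inequality for the kernel $2^{-\varepsilon|\nu|}$ on $\ell^p(\Z)$ then yields $\sum_j\|\Delta_jh\|_p^p\lesssim_p|\widehat h(0)|^p+\sum_n\|{\bf d}_nh\|_p^p$. I expect this off-diagonal analysis to be the main obstacle, in particular the tail bookkeeping for the $j>n$ regime. If it gets messy, I would first try to reduce to the single-interval estimate $\|K_j*h_I\|_p\lesssim 2^{-\varepsilon|j-n|}|I|^{1/p}$ and handle the sum over $I$ by a Schur-type bound over neighboring intervals at scale $2^{-n}$.

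\emph{Lower bound.} I would argue by duality, using $a_I|I|=\langle h,h_I\rangle$. Fix finitely many $b_I$ with $\sum|b_I|^{p'}|I|\le1$ and set $g=\sum b_Ih_I$. Since $\sum_j\varphi_j=1$, we have $\langle h,g\rangle=\sum_j\langle\Delta_jh,\widetilde\Delta_jg\rangle$, where $\widetilde\Delta_j=\Delta_{j-1}+\Delta_j+\Delta_{j+1}$. Hölder's inequality then gives $|\langle h,g\rangle|\lesssim\|h\|_{B^0_{p,p}}\|g\|_{B^0_{p',p'}}$. Applying the already proven upper bound with exponent $p'$ gives $\|g\|_{B^0_{p',p'}}\lesssim1$. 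Taking the supremum over $b$ yields $\sum|a_I|^p|I|\lesssim\|h\|^p_{B^0_{p,p}}$. Together with $|\widehat h(0)|\le\|\Delta_0h\|_{L^p(\T)}$, as in Step I of Proposition~\ref{20260813eq01}, this proves the lower bound.

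\emph{Density.} As in Step III of Proposition~\ref{20260813eq01}, trigonometric polynomials are dense in $B^0_{p,p}(\T)$, so it suffices to approximate a trigonometric polynomial $t$. Its Haar coefficients satisfy $|a_I|\lesssim|I|\,\|t'\|_\infty$. Hence $\mathbb E_Nt$ is a Haar polynomial with $\sum_{n\ge N}\|{\bf d}_nt\|_p^p\lesssim\sum_{n\ge N}2^{-np}\to0$. By the upper bound applied to $\mathbb E_Mt-\mathbb E_Nt$, the sequence $\{\mathbb E_Nt\}$ is Cauchy in $B^0_{p,p}(\T)$. Since $\mathbb E_Nt\to t$ uniformly, and hence in $\mathcal D'(\T)$, the $B^0_{p,p}$-limit must be $t$. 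Therefore Haar polynomials are dense.
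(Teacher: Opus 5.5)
Your proof is correct, but it takes a genuinely different route from the paper.

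\textbf{What the paper does.} The paper does not prove the norm equivalence. It invokes the Haar characterization of Besov spaces due to Garrig\'os, Seeger, and Ullrich with $s=0$ and $q=p$, transferred to the periodic setting, and reads off $2^n\langle h,h_I\rangle=a_I$. It then gets \eqref{20260820eq02A} from disjointness, exactly as you do. Density is dismissed as a standard truncation argument with the details omitted.

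\textbf{What you do instead.} You reprove the needed special case from scratch, using the almost-orthogonality bound $\|\Delta_j{\bf d}_nh\|_{L^p(\T)}\lesssim_p 2^{-|j-n|/p}\|{\bf d}_nh\|_{L^p(\T)}$.
\begin{enumerate}
\item For $j\le n$, the decay $2^{-(n-j)}$ comes from the cancellation of $h_I$ on $I$, via the derivative bound on $K_j$ and the maximal function.
\item For $j>n$, the decay $2^{-(j-n)/p}$ comes from the three jump points of $h_I$. The H\"older/Schur step closes because all jump points lie on the grid $2^{-n-1}\Z$, and $2^{-n-1}\ge 2^{-j}$ when $j>n$, so $\sum_{I}\phi_I\lesssim 1$ pointwise.
\item Young's inequality then gives the upper bound.
\item Pairing with $g=\sum b_Ih_I$ and applying the upper bound at exponent $p'$ gives the lower bound. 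Here $\varphi_{j-1}+\varphi_j+\varphi_{j+1}=1$ on the support of $\varphi_j$, so the Parseval splitting is legitimate.
\item For density, you show $\{\mathbb E_Nt\}$ is Cauchy in $B_{p,p}^0(\T)$. You identify its limit with $t$ using completeness and the continuity of $h\mapsto\widehat h(n)$ from Step III of Proposition~\ref{20260813eq01}.
\end{enumerate}

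\textbf{What each route buys.} The citation is short. However, it hides both the transfer to the circle and the fact that $s=0$ lies strictly inside the admissible smoothness range $1/p-1<s<1/p$ of the Haar system.

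Your argument is longer but self-contained. It makes transparent why every $1<p<\infty$ works: both off-diagonal exponents, $1$ and $1/p$, are positive. It also supplies the density argument the paper omits.

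Finally, it uses exactly the scale-separated estimates the paper itself runs in Theorem~\ref{20260820thm01}, with decays $2^{-(n-k)/p}$ and $2^{-(k-n)}$. So it fits naturally into the paper's own toolkit.
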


\begin{proof}
Applying the standard Haar characterization for Besov functions
\cite[(1.7)]{GarrigosSeegerUllrich2023} with $s=0$ and $q=p$ in this periodic setting, we see that for any $h \in B_{p, p}^0(\T)$, 
\begin{equation}\label{20260820eq02B}
\|h\|_{B_{p,p}^0(\T)}^p
\simeq_p|\widehat h(0)|^p
+\sum_{n=0}^\infty2^{-n} \left(\sum_{I\in\calD_n}
\left|2^n\langle h,h_I\rangle\right|^p \right).
\end{equation}
Since $I\in\calD_n$, we have $|I|=2^{-n}$ and
$$
2^n\langle h,h_I\rangle
=\frac{1}{|I|}\int_Ih(t)h_I(t)\,dt
=a_I.
$$
This together with \eqref{20260820eq02B} gives
\begin{align*}
\|h\|_{B_{p,p}^0(\T)}^p
&\simeq_p|\widehat h(0)|^p
+\sum_{n=0}^\infty2^{-n} \left(\sum_{I\in\calD_n}|a_I|^p \right)\\
&=|\widehat h(0)|^p
+\sum_{n=0}^\infty\sum_{I\in\calD_n}|a_I|^p|I|\\
&=|\widehat h(0)|^p+\sum_{I\in\calD}|a_I|^p|I|,
\end{align*}
which proves \eqref{20260820eq02}.

\vspace{0.1cm}

Moreover, since
$$
{\bf d}_nh=\sum_{I\in\calD_n}a_Ih_I,
$$
the pairwise disjointness of the intervals in $\calD_n$ gives
$$
\|{\bf d}_nh\|_{L^p(\T)}^p
=\sum_{I\in\calD_n}|a_I|^p|I|,
$$
and \eqref{20260820eq02A} follows by summing the identity over $n\geq0$. Finally, the density of Haar polynomials in $B_{p,p}^0(\T)$ follows from a standard truncation argument, whose details are omitted.
\end{proof}

\vspace{0.1cm}

For $I\in\calD$, let
$$
Q_I^{\textrm{up}}:=Q_I\setminus\left(Q_{I_+}\cup Q_{I_-}\right)
$$
be the \emph{upper Carleson tent} associated with $I$. Therefore, 
\begin{equation}\label{20260820eq03}
Q_I=\bigsqcup_{\substack{J\in\calD \\ J\subseteq I}}Q_J^{\textrm{up}},
\end{equation}
and 
$$
\mu(Q_I)=\sum_{\substack{J\in\calD \\ J\subseteq I}}\mu(Q_J^{\textrm{up}}).
$$

For a Haar polynomial $h$, we define its \emph{dyadic Whitney extension} by
\begin{equation}\label{20260820eq06}
\mathcal W_{\calD}h(z):=\frac{1}{|I|}\int_Ih(t)\,dt,
\qquad z\in Q_I^{\textrm{up}},\quad I\in\calD.
\end{equation}
Equivalently, if $I\in\calD_n$ and $z=re^{2\pi it}\in Q_I^{\textrm{up}}$, then
$$
\mathcal W_{\calD}h(z)=\mathbb E_nh(t).
$$
Since $\mathbb E_nh$ is constant on every $I\in\calD_n$, the right-hand side of \eqref{20260820eq06} is constant on $Q_I^{\textrm{up}}$. The following lemma is straightforward. 

\begin{lem}\label{20260820lem02}
Let $h$ be a Haar polynomial given by \eqref{20260820eq01}. Then, for every $J\in\calD$, the function $\mathcal W_{\calD}h$ is constant on $Q_J^{\textrm{up}}$, and
\begin{equation}\label{20260820eq07}
\left.\mathcal W_{\calD}h\right|_{Q_J^{\textrm{up}}}
=\widehat h(0)+\sum_{\substack{I\in\calD \\ I\supsetneq J}} a_I \left(\left.h_I\right|_J\right),
\end{equation}
where
\begin{equation} \label{20260821eq02}
\left.h_I\right|_J=
\begin{cases}
1,&J\subseteq I_+,\\
\\
-1,&J\subseteq I_-.
\end{cases}
\end{equation} 
\end{lem}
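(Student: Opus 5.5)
The plan is to unwind the definition \eqref{20260820eq06} and compute the average of $h$ over $J$ term by term from the Haar expansion \eqref{20260820eq01}. Fix $J\in\calD$. By definition, $\mathcal W_{\calD}h$ equals the constant $\frac{1}{|J|}\int_J h(t)\,dt$ on $Q_J^{\textrm{up}}$, so constancy is immediate. Since $h$ is a Haar polynomial, the sum in \eqref{20260820eq01} is finite, and the average over $J$ can be computed termwise:
$$
\frac{1}{|J|}\int_J h(t)\,dt=\widehat h(0)+\sum_{I\in\calD}a_I\,\frac{1}{|J|}\int_J h_I(t)\,dt .
$$
Here we use that $\frac{1}{|J|}\int_J \widehat h(0)\,dt=\widehat h(0)$.

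The key step is to sort the dyadic intervals $I$ according to their position relative to $J$, using the nestedness of $\calD$. For any $I\in\calD$, exactly one of three cases occurs: $I\cap J=\emptyset$, $I\subseteq J$, or $I\supsetneq J$.

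\emph{Disjoint case.} If $I\cap J=\emptyset$, then $h_I$ vanishes on $J$, so the term is zero.

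\emph{Case $I\subseteq J$.} Then $h_I$ is supported in $J$, and
$$
\int_J h_I=\int_I h_I=|I_+|-|I_-|=0,
$$
since the two children have equal length $|I|/2$. This is precisely the mean-zero cancellation of the Haar functions.

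\emph{Case $I\supsetneq J$.} Then $J\in\calD$ is contained in one of the dyadic children of $I$, namely $J\subseteq I_+$ or $J\subseteq I_-$. Hence $h_I$ is constant on $J$ and equals $\pm1$ according to \eqref{20260821eq02}. The average of $h_I$ over $J$ is therefore exactly $h_I|_J$.

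Summing the three cases gives \eqref{20260820eq07}. The proof is routine, and the only point needing care is the trichotomy for dyadic intervals, including the fact that a strict dyadic ancestor's children separate $J$. This follows from the nested structure of $\calD$, since $J$ lies in generation at least one deeper than $I$.
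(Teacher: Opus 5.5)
Your proof is correct and follows the paper's own argument. You expand the average $\frac{1}{|J|}\int_J h$ termwise and sort each $I$ by the dyadic trichotomy: disjoint from $J$, $I\subseteq J$ (Haar cancellation), or $I\supsetneq J$ (where $h_I$ is constant on $J$).
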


\begin{proof}
By the definition of $\mathcal W_{\calD}h$, we have
\begin{align} \label{20260820eq23A}
\left.\mathcal W_{\calD}h\right|_{Q_J^{\textrm{up}}}=\frac{1}{|J|}\int_Jh(t)\,dt=\widehat h(0)+\sum_{I\in\calD}a_I\frac{1}{|J|}\int_Jh_I(t)\,dt.
\end{align}
Since $I$ and $J$ are dyadic intervals, they are either disjoint or one is contained in the other. If $I\cap J=\varnothing$, then
$$
\int_Jh_I(t)\,dt=0.
$$
If $I\subseteq J$, then
$$
\int_Jh_I(t)\,dt=\int_Ih_I(t)\,dt=|I_+|-|I_-|=0.
$$
Finally, if $J\subsetneq I$, then $J$ is contained in exactly one of the two dyadic children of $I$. Hence $h_I$ is constant on $J$ and
$$
\frac{1}{|J|}\int_Jh_I(t)\,dt
=\left.h_I\right|_J
=
\begin{cases}
1,&J\subseteq I_+,\\
\\
-1,&J\subseteq I_-.
\end{cases}
$$
Therefore, only the third case above contributes to the sum \eqref{20260820eq23A}, which proves \eqref{20260820eq07}.
\end{proof}

We shall consider the \emph{Whitney embedding}
\begin{equation}\label{20260820eq08}
\int_\D|\mathcal W_{\calD}h(z)|^p\,d\mu(z)
\leq \widehat{C}_{best} \|h\|_{B_{p,p}^0(\T)}^p
\end{equation}
for every Haar polynomial $h$, where $\widehat{C}_{best}$ denotes the optimal constant in \eqref{20260820eq08}. Since $\mathcal W_{\calD}h$ is constant on each upper Carleson tent, \eqref{20260820eq02} and \eqref{20260820eq07} show that \eqref{20260820eq08} is equivalent to the following tree estimate
\begin{equation}\label{20260820eq09}
\sum_{J\in\calD}
\left|\widehat h(0)+\sum_{\substack{I\in\calD \\ I\supsetneq J}}a_I \left(\left.h_I\right|_J\right)\right|^p
\mu(Q_J^{\textrm{up}})
\lesssim_p|\widehat h(0)|^p+\sum_{I\in\calD}|a_I|^p|I|.
\end{equation}
The rest of this section is devoted to proving that the harmonic embedding \eqref{20260819eq01} and the Whitney embedding \eqref{20260820eq08} are equivalent.

\subsection{The equivalence between the harmonic and Whitney embeddings}\label{20260820subsec02}

For any dyadic system $\calD$ on $\T$, denote
\begin{equation}\label{20260820eq10}
\left\|\mu\right\|_{\mathcal{CM}, \calD}:=\sup_{I\in\calD}\frac{\mu(Q_I)}{|I|}.
\end{equation}
We have the following. 

\begin{lem}\label{20260820lem01}
Let $1<p<\infty$. Then the following statements hold.
\begin{enumerate}
    \item If the harmonic embedding \eqref{20260819eq01} holds, then
\begin{equation}\label{20260820eq11}
\left\|\mu\right\|_{\mathcal{CM}, \calD}\lesssim_p\widetilde C_{best}.
\end{equation}
\item 
If the Whitney embedding \eqref{20260820eq08} holds, then
\begin{equation}\label{20260820eq12}
\left\|\mu\right\|_{\mathcal{CM}, \calD}\lesssim_p \widehat{C}_{best}.
\end{equation}
\end{enumerate}
\end{lem}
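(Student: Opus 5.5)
Both parts come from testing the relevant embedding on a single test function adapted to a fixed dyadic interval $I\in\calD$. The aim is a function whose extension is $\gtrsim 1$ on $Q_I$ while its $B_{p,p}^0(\T)$ norm to the $p$-th power is $\lesssim_p |I|$. Dividing by $|I|$ and taking the supremum over $I$ then gives \eqref{20260820eq11} and \eqref{20260820eq12}.

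\textbf{Part (2), Whitney embedding.} Take $h=\one_I$ with $I\in\calD_n$. This is a Haar polynomial, since $\one_I=|I|+\sum_{I\subsetneq K}\pm\frac{|I|}{|K|}\cdot\frac{1}{\ldots}h_K$ is a finite combination: only ancestors $K\supsetneq I$ contribute, with $|a_K|=|I|/|K|$ (up to the factor coming from the $L^\infty$ normalization).

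For every $J\subseteq I$ and $z\in Q_J^{\textrm{up}}$ we have $\mathcal W_{\calD}h(z)=\frac{1}{|J|}\int_J\one_I=1$. By \eqref{20260820eq03}, the left side of \eqref{20260820eq08} is therefore at least $\mu(Q_I)$.

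On the other side, Lemma \ref{20260820lemHaar} gives
$$\|h\|^p\simeq_p |I|^p+\sum_{K\supsetneq I}(|I|/|K|)^p|K| = |I|^p+|I|^p\sum_{k\ge1}2^{-k(p-1)}|I|^{1-p}\lesssim_p |I|.$$
The geometric series converges because $p>1$. Hence $\mu(Q_I)\lesssim_p\widehat C_{best}|I|$.

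\textbf{Part (1), harmonic embedding.} We need a smooth bump whose Poisson extension is bounded below on $Q_I$. Let $|I|=2^{-n}$ with center $e^{2\pi i t_I}$. Set $h(t)=\eta(2^n(t-t_I))$, where $\eta\in C_c^\infty(\R)$, $0\le\eta\le1$, $\eta=1$ on $[-3,3]$, and $\eta$ is supported in $[-4,4]$; for small $n$, periodize or simply take $h\equiv1$.

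\emph{Lower bound on $Q_I$.} For $z=re^{2\pi it}\in Q_I$ we have $1-r\le 2^{-n}$ and $|t-t_I|\le 2^{-n-1}$, so $h=1$ on an arc around $t$ of length $\gtrsim 2^{-n}\ge 1-r$. The Poisson kernel places mass $\ge c_0>0$ on such an arc, and since $h\ge0$ this gives $P[h](z)\ge c_0$. Thus the left side of \eqref{20260819eq01} is at least $c_0^p\mu(Q_I)$.

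\emph{Upper bound on the norm.} We must show $\|h\|_{B^0_{p,p}}^p\lesssim_p 2^{-n}$. This is the main computational step; I would handle it with scaling estimates for the Littlewood--Paley pieces.
\begin{itemize}
\item[$\bullet$] For $j\le n$: $\|\Delta_j h\|_{L^p}\le\|\Delta_j h\|_{L^\infty}^{1-1/p}\|\Delta_j h\|_{L^1}^{1/p}$. The kernel of $\Delta_j$ has $L^1$ norm $O(1)$, giving $\|\Delta_j h\|_{L^1}\lesssim \|h\|_{L^1}\lesssim 2^{-n}$. Its sup norm is $O(2^j)$, giving $\|\Delta_j h\|_\infty\lesssim 2^{j}\|h\|_{L^1}\lesssim 2^{j-n}$. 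Hence $\|\Delta_jh\|_p^p\lesssim 2^{(j-n)(p-1)}2^{-n}$.
\item[$\bullet$] For $j>n$: since $\widehat{\varphi_j}$ is concentrated at frequency $2^j$ and $h$ is smooth at scale $2^{-n}$, write $\Delta_j h=2^{-2Mj}\widetilde\Delta_j(\partial^{2M}h)$. This yields $\|\Delta_jh\|_p\lesssim 2^{-(j-n)M}\|h\|_p\lesssim 2^{-(j-n)M}2^{-n/p}$.
\end{itemize}
Summing both geometric series gives $\|h\|_{B^0_{p,p}}^p\lesssim_p 2^{-n}=|I|$. Alternatively, one could cite a dilation estimate for $B^0_{p,p}$, or compare the Haar-coefficient characterization \eqref{20260820eq02B} with Lemma \ref{20260820lemHaar}.

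Combining the lower bound with the norm bound gives $\mu(Q_I)\lesssim_p\widetilde C_{best}|I|$, which is \eqref{20260820eq11}.
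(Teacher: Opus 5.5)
Your proposal is correct. Part (2) is exactly the paper's argument; Part (1) takes a different route.

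\textbf{How Part (1) differs from the paper.} The paper reuses the same test function $\one_I$ for the harmonic embedding. The single estimate $\|\one_I\|_{B^0_{p,p}(\T)}^p\lesssim_p|I|$, obtained from Lemma~\ref{20260820lemHaar}, then serves both parts. The lower bound on $Q_I$ comes from observing that the point $t\in I$ splits $I$ into two subarcs, one of length at least $|I|/2$. On that subarc the Poisson kernel has mass $\gtrsim\int_0^{|I|/(2(1-r))}\frac{du}{1+u^2}\gtrsim1$, which gives $P[\one_I]\gtrsim1$ on $Q_I$.

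\textbf{What each approach buys.} Your smooth bump makes the lower bound slightly more symmetric. Its Besov estimate is proved directly from the Littlewood--Paley definition, so it does not depend on the Haar characterization \eqref{20260820eq02B}. However, you already needed and proved $\|\one_I\|^p_{B^0_{p,p}}\lesssim_p|I|$ in Part (2), and $\one_I$ is a legitimate element of $B^0_{p,p}(\T)$. So the extra Littlewood--Paley computation buys nothing here, and simply taking $h=\one_I$ in Part (1) would have sufficed.

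\textbf{Minor slips in the computation.} The high-frequency step is mis-stated. From $\Delta_jh=2^{-2Mj}\widetilde\Delta_j(\partial^{2M}h)$ you get
\[
\|\Delta_jh\|_{L^p(\T)}\lesssim 2^{-2Mj}\|\partial^{2M}h\|_{L^p(\T)}\lesssim_M 2^{-2M(j-n)}2^{-n/p}.
\]
This is a bound via $\|\partial^{2M}h\|_{L^p}$, not via $\|h\|_{L^p}$ directly, although the resulting decay is what you need. Also, with the paper's $L^\infty$-normalized Haar functions $h_K=\one_{K_+}-\one_{K_-}$, the coefficients of $\one_I$ are exactly $\pm|I|/|K|$ for $K\supsetneq I$. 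There is no additional normalization factor.
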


\begin{proof}
We begin with recording an estimate that will be used in both parts. Fix $I\in\calD$, and suppose that $I\in\calD_m$. By direct calculation,
$\widehat{\one_I}(0)=|I|$ and, for every $J\in\calD$,
$$
\frac{1}{|J|}\int_J\one_I(t)h_J(t)\,dt
=
\begin{cases}
\dfrac{|I|}{|J|},&I\subseteq J_+;\\
-\dfrac{|I|}{|J|},&I\subseteq J_-;\\
0,&J\not\supsetneq I.
\end{cases}
$$
It follows from \eqref{20260820eq02} that
\begin{align}
\|\one_I\|_{B_{p,p}^0(\T)}^p
&\simeq_p|I|^p+\sum_{\substack{J\in\calD\\J\supsetneq I}}
\left(\frac{|I|}{|J|}\right)^p|J|\nonumber\\
&=|I|^p+|I|^p\sum_{k=1}^m(2^k|I|)^{1-p}\nonumber\\
&=|I|^p+|I|\sum_{k=1}^m2^{-k(p-1)}
\lesssim_p|I|.
\label{20260820eq13}
\end{align}

We first prove the assertion $(1)$. Suppose that the harmonic embedding \eqref{20260819eq01} holds. Recall that for $0 \le r<1$, the Poisson kernel satisfies
\begin{align}\label{20260820eq14X}
P_r(u)
&:=\frac{1-r^2}{1-2r\cos(2\pi u)+r^2} =\frac{1-r^2}{(1-r)^2+4r\sin^2(\pi u)} \nonumber \\ 
&\simeq\frac{1-r}{(1-r)^2+d_{\T}(u,0)^2},
\end{align}
where $d_{\T}$ denotes the periodic distance on $\T$.

Let $I \subseteq \T$ and $z=re^{2\pi it}\in Q_I$. Then $t\in I$ and $1-r\leq|I|$. Note that the point $t$ divides $I$ into two subarcs whose lengths sum to $|I|$, hence one of these subarcs has length at least $|I|/2$. Therefore, by \eqref{20260820eq14X},
\begin{align}  \label{20260820eq14}
P[\one_I](re^{2\pi it})
&=\int_I P_r(t-s)ds  \simeq \int_I \frac{1-r}{(1-r)^2+d_{\T}(t-s,0)^2} ds \nonumber \\ 
&\gtrsim\int_0^{\frac{|I|}{2}}\frac{1-r}{(1-r)^2+s^2}\,ds =\int_0^{\frac{|I|}{2(1-r)}}\frac{1}{1+u^2}\,du
\gtrsim1,
\end{align}
where in the last estimate above, we have used the fact that $|I|/(1-r) \ge 1$. Combining now \eqref{20260820eq14}, \eqref{20260819eq01} and \eqref{20260820eq13}, we see that
\begin{align*}
\mu(Q_I)
&\lesssim\int_{Q_I}|P[\one_I](z)|^p\,d\mu(z)\\
&\leq\widetilde C_{\mathrm{best}}\|\one_I\|_{B_{p,p}^0(\T)}^p
\lesssim_p\widetilde C_{\mathrm{best}}|I|.
\end{align*}
Taking the supremum over $I\in\calD$ proves \eqref{20260820eq11}.

\medskip

Next, we prove assertion $(2)$. Suppose that the Whitney embedding \eqref{20260820eq08} holds. By \eqref{20260820eq03} and the definition of $\mathcal W_{\calD}$, we have
\begin{equation} \label{20260821eq01}
\mathcal W_{\calD}\one_I(z)=1,\qquad z\in Q_I.
\end{equation} 
Moreover, since 
$$
\one_I(t)
=|I|+\sum_{J\in\calD, \; J\supsetneq I}
\frac{|I|}{|J|} \left(\left.h_J\right|_I \right)h_J(t), \qquad t \in \T, 
$$
where we recall that $\left.h_J\right|_I$ is defined in \eqref{20260821eq02}, 
one has that $\one_I$ is a Haar polynomial, and hence \eqref{20260821eq01}, \eqref{20260820eq08} and \eqref{20260820eq13} imply that
\begin{align*}
\mu(Q_I)
&=\int_{Q_I}|\mathcal W_{\calD}\one_I(z)|^p\,d\mu(z)\\
&\leq\widehat C_{\mathrm{best}}\|\one_I\|_{B_{p,p}^0(\T)}^p
\lesssim_p\widehat C_{\mathrm{best}}|I|.
\end{align*}
Taking the supremum over $I\in\calD$ proves \eqref{20260820eq12}.
\end{proof}

We next compare the Poisson extension with the Whitney extension.

\begin{thm}\label{20260820thm01}
Let $1<p<\infty$, and let $\mu$ be a positive Borel measure on $\D$ such that $\left\|\mu\right\|_{\mathcal{CM}, \calD}<\infty$. Then
\begin{equation}\label{20260820eq14Y}
\int_\D|P[h](z)-\mathcal W_{\calD}h(z)|^p\,d\mu(z)
\lesssim_p \left\|\mu\right\|_{\mathcal{CM}, \calD}\|h\|_{B_{p,p}^0(\T)}^p
\end{equation}
for every Haar polynomial $h$. Consequently, the operator
$$
h\longmapsto P[h]-\mathcal W_{\calD}h,
$$
initially defined on Haar polynomials, extends uniquely to a bounded linear operator from $B_{p,p}^0(\T)$ into $L^p(\mu)$.
\end{thm}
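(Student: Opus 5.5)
The plan is to expand $h$ in the Haar basis and compare $P$ and $\mathcal W_{\calD}$ term by term. The whole estimate is then a Schur-type bound in which the Carleson condition $\left\|\mu\right\|_{\mathcal{CM},\calD}<\infty$ is used only through $\mu(Q_I)\lesssim\left\|\mu\right\|_{\mathcal{CM},\calD}|I|$, applied to dyadic boxes (each arc is covered by $O(1)$ dyadic arcs of comparable size).

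\textbf{Step 1: the kernel.} Write $h=\widehat h(0)+\sum_I a_Ih_I$ as in \eqref{20260820eq01}. Constants are reproduced by both $P$ and $\mathcal W_{\calD}$, so the term $\widehat h(0)$ cancels. For $z=re^{2\pi it}\in Q_J^{\textrm{up}}$, Lemma~\ref{20260820lem02} gives
$$
E(z):=P[h](z)-\mathcal W_{\calD}h(z)=\sum_{I\in\calD}a_IK_I(z),
$$
where
$$
K_I(z):=
\begin{cases}
P[h_I](z)-\left.h_I\right|_J, & I\supsetneq J,\\
P[h_I](z), & \text{otherwise}.
\end{cases}
$$
Here $1-r\simeq|J|$ and $t\in J$. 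Since $h_I$ is a step function with jumps at its three points $c\in\Gamma_I$ (the two endpoints of $I$ and its midpoint), the Poisson kernel bound \eqref{20260820eq14X} should give the following three estimates.
\begin{itemize}
\item[(a)] If $I\supsetneq J$, then $|K_I(z)|\lesssim\sum_{c\in\Gamma_I}\min\bigl(1,\tfrac{1-r}{d_{\T}(t,c)}\bigr)$.
\item[(b)] If $I\subseteq J$, then by the mean-zero cancellation of $h_I$, $|K_I(z)|\lesssim \frac{(1-r)|I|^2}{((1-r)+d_{\T}(t,c_I))^3}$, where $c_I$ is the centre of $I$.
\item[(c)] If $I\cap J=\varnothing$, then $|K_I(z)|\lesssim\min\bigl(1,\tfrac{1-r}{d_{\T}(t,\Gamma_I)}\bigr)\min\bigl(1,\tfrac{|I|}{d_{\T}(t,I)}\bigr)$.
\end{itemize}

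\textbf{Step 2: Hölder with scale weights.} The main obstacle is case (a). When $J$ lies near a common endpoint $c$ of a long chain of ancestors, each $K_I$ is of size $\simeq 1$ with no decay in $|J|/|I|$. The sum $\sum_I |K_I(z)|$ is therefore not uniformly bounded, and an unweighted Schur test fails. I would handle this by weighting with a small power. Fix $0<\delta<1/p$ and apply Hölder:
$$
|E(z)|^p\le\Bigl(\sum_I|a_I|^p|K_I(z)|\,\omega_I(z)^{p}\Bigr)\Bigl(\sum_I|K_I(z)|\,\omega_I(z)^{-p'}\Bigr)^{p-1},
$$
with $\omega_I(z)=(|I|/|J|)^{\delta}$ for $I\supsetneq J$ and a suitable decaying choice, e.g. $(|J|/|I|)^{\delta}$, in cases (b) and (c). In the second factor, the ancestor terms are bounded by $\sum_{k\ge1}2^{-k\delta p'}\lesssim1$. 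The descendant and disjoint terms are bounded uniformly in $z$ by the decay in (b) and (c), after summing over scales and positions. So the second factor is $\lesssim_p1$.

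\textbf{Step 3: integrate against $\mu$, one $I$ at a time.} It remains to show
$$
\int_\D|K_I(z)|\,\omega_I(z)^p\,d\mu(z)\lesssim_p\left\|\mu\right\|_{\mathcal{CM},\calD}|I|,
$$
since summing this against $|a_I|^p$ and applying Lemma~\ref{20260820lemHaar} gives \eqref{20260820eq14Y}. For the ancestor part, split the points $z\in Q_I$ by scale and position: $1-r\simeq2^{-k}|I|$ and $d_{\T}(t,c)\simeq2^{-m}|I|$ for some $c\in\Gamma_I$, where the kernel is only non-negligible for $k\ge m$. Such points lie in a box of width $\simeq 2^{-m}|I|$, hence carry $\mu$-mass $\lesssim\left\|\mu\right\|_{\mathcal{CM},\calD}2^{-m}|I|$. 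On this region the kernel is $\lesssim 2^{m-k}$ and the weight is $2^{k\delta p}$. Thus the contribution is
$$
\lesssim\left\|\mu\right\|_{\mathcal{CM},\calD}|I|\sum_{m\ge0}\sum_{k\ge m}2^{-k(1-\delta p)},
$$
which is finite because $\delta p<1$. The points with $m>k$, i.e. $d_{\T}(t,c)<1-r$, are absorbed into the $m=k$ term. The contributions from $z$ outside $Q_I$ (cases (b) and (c)) are handled by the same dyadic decomposition, with the extra decay $|I|/d$ or $|I|^2/(\cdot)^2$ absorbing the growth of the region.

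\textbf{Step 4: extension.} By density of Haar polynomials (Lemma~\ref{20260820lemHaar}) and linearity, the bound \eqref{20260820eq14Y} gives the unique bounded extension $B^0_{p,p}(\T)\to L^p(\mu)$. I expect the bookkeeping of Steps 2--3, namely choosing $\delta$ so that both factors converge simultaneously in all three cases, to be the only delicate point.
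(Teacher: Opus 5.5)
Your weighted Schur test over individual Haar functions differs from the paper's route, and it can be made to work. As written, however, case (c) breaks both halves of it.

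\textbf{Step~2.} You give disjoint $I$ the weight $(|J|/|I|)^{\delta}$. When $|I|>|J|$ this makes $\omega_I^{-p'}=(|I|/|J|)^{\delta p'}$ grow, and coarse disjoint intervals need not have small kernel. Take $J=[0,2^{-n})$ and $I=[1-2^{-k},1)$ with $1\le k\le n-C$. Each such $I$ is disjoint from $J$ but has a jump at the point $0$, and $|P[h_I](z)|\gtrsim 1$ for $z\in Q_J^{\textrm{up}}$. Hence your second factor is $\gtrsim 2^{n\delta p'}$ and is not bounded uniformly in $z$.

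\textbf{Step~3.} Bound (c) has no decay in $1-r$ when $d_{\T}(t,I)\lesssim|I|\ll 1-r$. It gives $|K_I(z)|\lesssim 1$ there, whereas the true size is $\lesssim |I|^2/(1-r)^2$. The Carleson condition does not control the mass of thin vertical strips above a neighbour of $I$, so this loss is fatal. For $I=[0,|I|)$ and each $k\ge1$ with $2^k|I|<1$, place a point mass $2^k|I|$ at a point of $Q^{\textrm{up}}_{[1-2^k|I|,1)}$ within distance $|I|$ of $0$. This measure satisfies $\mu(Q_{I'})\le 4|I'|$ for all $I'\in\calD$. Yet bound (c) only gives $\sum_k 2^k|I|\,2^{k\delta p}\simeq|I|^{-\delta p}$ for this part of $\int|K_I|\,\omega_I^p\,d\mu$, which is $\gg|I|$; even with $\delta=0$ it is $\simeq 1$. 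So the claim that the ``extra decay $|I|/d$'' absorbs the growth of the region is false, and no choice of $\delta$ repairs it.

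The missing observation is that the kernel is governed by relative scale, not by containment. A disjoint $I$ with $|I|>|J|$ behaves like an ancestor, and a disjoint $I$ with $|I|\le|J|$ behaves like a descendant. The fix is as follows.
\begin{enumerate}
\item For every $I$ with $|I|>|J|$, keep bounds (a)/(c) but use the weight $(|I|/|J|)^{\delta}$.
\item For every $I$ with $|I|\le|J|$, use the cancellation bound $(1-r)|I|^2/((1-r)+d_{\T}(t,c_I))^3$ from (b), which is valid for disjoint $I$ as well, with weight $(|J|/|I|)^{\delta}$.
\end{enumerate}
With these choices the second factor is $O(1)$ per scale times $2^{-k\delta p'}$. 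The first factor is $\lesssim\sum_k (k+1)2^{-k(1-\delta p)}|I|$ from points below scale $|I|$, and $\lesssim\sum_k 2^{-k(1-\delta p)}|I|$ from tents of size $2^k|I|$. Your argument then closes.

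The paper sidesteps this pointwise bookkeeping by grouping Haar functions into martingale differences $\mathbf d_kh$. It proves generation-by-generation $\ell^p$ bounds over upper tents, with decay $2^{-(n-k)/p}$ for $k<n$ (via the $L^p$ modulus of continuity of $\mathbf d_kh$) and $2^{-(k-n)}$ for $k\ge n$ (via cancellation and the maximal function), and sums them with Young's inequality. The exponent $1/p$ there reflects the same jump-discontinuity loss that forces your restriction $\delta p<1$.
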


\begin{proof}
Let $h$ be a Haar polynomial and write
$$
h=\widehat h(0)+\sum_{k=0}^\infty{\bf d}_kh.
$$
Note that the above sum is finite. 

\vspace{0.1cm}

For $n,k\geq0$, define
$$
A_{n,k}(h):=
\left[\sum_{I\in\calD_n}|I|
\left(\sup_{z\in Q_I^{\textrm{up}}}
\left|P[{\bf d}_kh](z)-\left.\mathbb E_n({\bf d}_kh)\right|_I\right|^p \right) \right]^{1/p}.
$$
We claim that
\begin{equation}\label{20260820eq16}
A_{n,k}(h)\lesssim_p
\begin{cases}
2^{-(n-k)/p}\|{\bf d}_kh\|_{L^p(\T)},&k<n; \\
\\
2^{-(k-n)}\|{\bf d}_kh\|_{L^p(\T)},&k\geq n.
\end{cases}
\end{equation}

\medskip 

\noindent{\textit{\underline{Treatment of the case $k<n$}}}. We first make the following claim. 
\begin{equation}\label{20260820eq17}
\|\tau_u({\bf d}_kh)-{\bf d}_kh\|_{L^p(\T)}
\lesssim_p\min\left\{1,\bigl(2^kd_{\T}(u,0)\bigr)^{1/p}\right\}
\|{\bf d}_kh\|_{L^p(\T)},
\end{equation}
where $\tau_uf(t):=f(t-u)$. Suppose first that
$d_{\T}(u,0)\leq2^{-k-3}$. Since ${\bf d}_kh$ is constant on every interval in $\calD_{k+1}$, each of which has length $2^{-k-1}$, the function
$\tau_u({\bf d}_kh)-{\bf d}_kh$ can be nonzero only on subintervals of length at most $d_{\T}(u,0)$ adjacent to their endpoints. It follows that
$$
\|\tau_u({\bf d}_kh)-{\bf d}_kh\|_{L^p(\T)}^p
\lesssim_p2^kd_{\T}(u,0)\|{\bf d}_kh\|_{L^p(\T)}^p.
$$
If $d_{\T}(u,0)>2^{-k-3}$, then trivially there holds
$$
\|\tau_u({\bf d}_kh)-{\bf d}_kh\|_{L^p(\T)}
\leq2\|{\bf d}_kh\|_{L^p(\T)}.
$$
This proves \eqref{20260820eq17}.

\medskip
\noindent  We now return to the estimate of $A_{n,k}(h)$. Since $k+1\leq n$, we have
\begin{equation} \label{20260822eq01}
\mathbb E_n({\bf d}_kh)={\bf d}_kh.
\end{equation}
Fix $I\in\calD_n$. Then for any $x\in I$, and $z=re^{2\pi iy}\in Q_I^{\textrm{up}}$, one has
$$
1-r\simeq2^{-n}
\qquad\textrm{and}\qquad
d_{\T}(x,y)\leq2^{-n},
$$
which, together with  \eqref{20260820eq14X}, gives that
$$
P_r(y-t)\lesssim
\frac{2^{-n}}{2^{-2n}+d_{\T}(y,t)^2}
\lesssim
\frac{2^{-n}}{2^{-2n}+d_{\T}(x,t)^2}.
$$
Since the Poisson kernel has integral one, we obtain
\begin{align*}
|P[{\bf d}_kh](z)-{\bf d}_kh(x)|
&=\left|\int_\T P_r(y-t)
\bigl({\bf d}_kh(t)-{\bf d}_kh(x)\bigr)\,dt\right|\\
&\lesssim\int_\T
\frac{2^{-n}|{\bf d}_kh(t)-{\bf d}_kh(x)|}
{2^{-2n}+d_{\T}(x,t)^2}\,dt\\
&=\int_\T
\frac{2^{-n}|{\bf d}_kh(x-u)-{\bf d}_kh(x)|}
{2^{-2n}+d_{\T}(u,0)^2}\,du.
\end{align*}
Therefore,
\begin{align}
\sup_{z\in Q_I^{\textrm{up}}}
|P[{\bf d}_kh](z)-{\bf d}_kh(x)|
&\lesssim\int_\T
\frac{2^{-n}|{\bf d}_kh(x-u)-{\bf d}_kh(x)|}
{2^{-2n}+d_{\T}(u,0)^2}\,du.
\label{20260820eq18}
\end{align}
Since ${\bf d}_kh$ is constant on every $I\in\calD_n$, the definition of $A_{n,k}(h)$, \eqref{20260822eq01}, and \eqref{20260820eq18} give
\begin{align*}
A_{n,k}(h)^p
&=\sum_{I\in\calD_n}|I|
\left(\sup_{z\in Q_I^{\textrm{up}}}
\left|P[{\bf d}_kh](z)-\left.\mathbb E_n({\bf d}_kh)\right|_I\right|^p \right) \\
&=\sum_{I\in\calD_n}\int_I
\sup_{z\in Q_I^{\textrm{up}}}
|P[{\bf d}_kh](z)-{\bf d}_kh(x)|^p\,dx\\
&\lesssim\left\|
\int_\T
\frac{2^{-n}|{\bf d}_kh(\cdot-u)-{\bf d}_kh(\cdot)|}
{2^{-2n}+d_{\T}(u,0)^2}\,du
\right\|_{L^p(\T)}^p.
\end{align*}
By generalized Minkowski's inequality and \eqref{20260820eq17},
\begin{align}
A_{n,k}(h)
&\lesssim\int_\T\frac{2^{-n}}
{2^{-2n}+d_{\T}(u,0)^2}
\|\tau_u({\bf d}_kh)-{\bf d}_kh\|_{L^p(\T)}\,du\nonumber\\
&\lesssim_p\|{\bf d}_kh\|_{L^p(\T)}
\int_\T\frac{2^{-n}}
{2^{-2n}+d_{\T}(u,0)^2}
\min\left\{1,\bigl(2^kd_{\T}(u,0)\bigr)^{1/p}\right\}\,du.
\label{20260820eq19}
\end{align}
Write now the last integral above into two parts:
\begin{equation} \label{20260822eq02}
\int_\T\frac{2^{-n}}
{2^{-2n}+d_{\T}(u,0)^2}
\min\left\{1,\bigl(2^kd_{\T}(u,0)\bigr)^{1/p}\right\}\,du=\mathfrak L_3+\mathfrak L_4, 
\end{equation} 
where
$$
\mathfrak L_3:=\int_{d_{\T}(u,0)\leq2^{-k}} \frac{2^{-n}}
{2^{-2n}+d_{\T}(u,0)^2}
\min\left\{1,\bigl(2^kd_{\T}(u,0)\bigr)^{1/p}\right\}\,du.
$$
and 
$$
\mathfrak L_4:=\int_{d_{\T}(u,0)>2^{-k}} \frac{2^{-n}}
{2^{-2n}+d_{\T}(u,0)^2}
\min\left\{1,\bigl(2^kd_{\T}(u,0)\bigr)^{1/p}\right\}\,du.
$$

\medskip
\noindent\textbf{Estimate of $\mathfrak L_3$.} If
$d_{\T}(u,0)\leq2^{-k}$, then $\min\left\{1,\bigl(2^kd_{\T}(u,0)\bigr)^{1/p} \right\}=\bigl(2^kd_{\T}(u,0)\bigr)^{1/p}$. Therefore, applying 
the change of variables
$s=2^nd_{\T}(u,0)$, we have 
\begin{align*}
\mathfrak L_3
&=\int_{\{d_{\T}(u,0)\leq2^{-k}\}}
\frac{2^{-n}\bigl(2^kd_{\T}(u,0)\bigr)^{1/p}}
{2^{-2n}+d_{\T}(u,0)^2}\,du \\
&\lesssim_p2^{-(n-k)/p}
\int_0^\infty\frac{s^{1/p}}{1+s^2}\,ds\\
&\lesssim_p2^{-(n-k)/p}.
\end{align*}

\medskip
\noindent\textbf{Estimate of $\mathfrak L_4$.} In this case, $\min\left\{1,\bigl(2^kd_{\T}(u,0)\bigr)^{1/p} \right\}=1$. Therefore, 
\begin{align*}
\mathfrak L_4
&=\int_{\{d_{\T}(u,0)>2^{-k}\}}
\frac{2^{-n}}
{2^{-2n}+d_{\T}(u,0)^2}\,du \\
&\lesssim2^{-n}\int_{2^{-k}}^{1/2}\frac{ds}{s^2}\\
&\lesssim2^{-(n-k)}
\leq2^{-(n-k)/p}.
\end{align*}
Combining the estimates for both $\mathfrak L_3$ and $\mathfrak L_4$ with \eqref{20260820eq19}, we conclude that
$$
A_{n,k}(h)\lesssim_p
2^{-(n-k)/p}\|{\bf d}_kh\|_{L^p(\T)}.
$$
This proves the first estimate in \eqref{20260820eq16}.

\medskip

\noindent{\textit{\underline{Treatment of the case $k\geq n$}}}. We now prove the second estimate in \eqref{20260820eq16}. Observe that in this case, $\mathbb E_n({\bf d}_kh)=0$.  Hence, 
\begin{equation}\label{20260820eq20A}
A_{n,k}(h)^p
=\sum_{I\in\calD_n}|I|
\left(\sup_{z\in Q_I^{\textrm{up}}}|P[{\bf d}_kh](z)|^p \right).
\end{equation}

 Fix $I\in\calD_n$, and for each $J\in\calD_k$, let $c_J$ denote the center of $J$. Since ${\bf d}_kh$ has integral zero on every $J \in \calD_k$, we obtain, for every $z=re^{2\pi iy}\in Q_I^{\textrm{up}}$,  
\begin{align}
P[{\bf d}_kh](z)
&=\int_{\T} P_r(y-t){\bf d}_kh(t)\,dt \nonumber \\
&=\sum_{J\in\calD_k}\int_JP_r(y-t){\bf d}_kh(t)\,dt\nonumber\\
&=\sum_{J\in\calD_k}\int_J
\bigl(P_r(y-t)-P_r(y-c_J)\bigr){\bf d}_kh(t)\,dt.
\label{20260820eq20}
\end{align}
Since $z\in Q_I^{\textrm{up}}$, we have $1-r\simeq2^{-n}$. Taking derivatives of the Poisson kernel in \eqref{20260820eq14X} gives
\begin{equation}\label{20260820eq21}
|P_r'(u)|
\lesssim\frac{2^{-n}}
{\bigl(2^{-n}+d_{\T}(u,0)\bigr)^3}.
\end{equation}
On the other hand, for $t\in J$, we have $d_{\T}(t,c_J)\leq|J|=2^{-k} \le 2^{-n}$. Therefore, the mean value theorem and \eqref{20260820eq21} imply that
$$
|P_r(y-t)-P_r(y-c_J)|
\lesssim2^{-k}
\frac{2^{-n}}
{\bigl(2^{-n}+d_{\T}(y,t)\bigr)^3}.
$$
Substituting this estimate into \eqref{20260820eq20}, we obtain
\begin{equation}\label{20260820eq22}
|P[{\bf d}_kh](z)|
\lesssim2^{-k}\int_\T
\frac{2^{-n}|{\bf d}_kh(t)|}
{\bigl(2^{-n}+d_{\T}(y,t)\bigr)^3}\,dt.
\end{equation}
Next, take any $x \in I$. Since $y\in I$, we have $d_{\T}(x,y)\leq2^{-n}$. Decomposing $\T$ into dyadic annuli centered at $x$, \eqref{20260820eq22} gives
\begin{align*}
|P[{\bf d}_kh](z)|
&\lesssim2^{-k}\sum_{j=0}^\infty
\frac{2^{-n}}{(2^j2^{-n})^3}
\int_{\{d_{\T}(x,t)<C2^j2^{-n}\}}|{\bf d}_kh(t)|\,dt\\
&\lesssim2^{-k}\sum_{j=0}^\infty
\frac{2^{-n}}{(2^j2^{-n})^3} \cdot
\bigl(2^j2^{-n}\bigr)\calM({\bf d}_kh)(x)\\
&=2^{-(k-n)}\sum_{j=0}^\infty
2^{-2j}\calM({\bf d}_kh)(x)\\
&\lesssim2^{-(k-n)}\calM({\bf d}_kh)(x).
\end{align*}
Here, $\calM$ refers to the standard Hardy--Littlewood maximal operator on $\T$. Since the preceding estimate is uniform over $z\in Q_I^{\textrm{up}}$, it follows that
\begin{equation}\label{20260820eq22A}
\sup_{z\in Q_I^{\textrm{up}}}|P[{\bf d}_kh](z)|
\lesssim2^{-(k-n)}\calM({\bf d}_kh)(x),
\qquad x\in I, 
\end{equation}
which further gives
$$
|I|\sup_{z\in Q_I^{\textrm{up}}}|P[{\bf d}_kh](z)|^p
\lesssim2^{-p(k-n)}
\int_I|\calM({\bf d}_kh)(x)|^p\,dx.
$$
Summing over $I\in\calD_n$ and using \eqref{20260820eq20A} gives
\begin{align*}
A_{n,k}(h)^p
&\lesssim2^{-p(k-n)}
\sum_{I\in\calD_n}\int_I|\calM({\bf d}_kh)(x)|^p\,dx\\
&=2^{-p(k-n)}
\|\calM({\bf d}_kh)\|_{L^p(\T)}^p \\
&\lesssim_p2^{-p(k-n)}\|{\bf d}_kh\|_{L^p(\T)},
\end{align*}
where in the last step, we used the fact that $\calM$ maps $L^p(\T)$ boundedly into itself. This proves the second estimate in \eqref{20260820eq16}.

\medskip 

We now return to the proof of the main estimate \eqref{20260820eq14Y}. For $n\geq0$, define
\begin{equation}\label{20260820eq23}
A_n:=\left(
\sum_{I\in\calD_n}|I|
\sup_{z\in Q_I^{\textrm{up}}}
|P[h](z)-\mathcal W_{\calD}h(z)|^p
\right)^{1/p}.
\end{equation}
For $z=re^{2\pi it}\in Q_I^{\textrm{up}}$, we have
$$
P[h](z)-\mathcal W_{\calD}h(z)
=\sum_{k=0}^\infty
\left(P[{\bf d}_kh](z)-\mathbb E_n({\bf d}_kh)(t)\right).
$$
Thus, by generalized Minkowski's inequality and \eqref{20260820eq16}, we have
\begin{align}
A_n
&=\left(\sum_{I\in\calD_n}|I|
\sup_{z \in Q_I^{\textrm{up}}}
\left|\sum_{k=0}^\infty
\left(P[{\bf d}_kh](z)-\mathbb E_n({\bf d}_kh)(t)\right)\right|^p
\right)^{1/p}\nonumber\\
&\leq\left(\sum_{I\in\calD_n}|I|
\left(\sum_{k=0}^\infty
\sup_{z \in Q_I^{\textrm{up}}}
\left|P[{\bf d}_kh](z)-\mathbb E_n({\bf d}_kh)(t)\right|\right)^p
\right)^{1/p}\nonumber\\
&\leq\sum_{k=0}^\infty
\left(\sum_{I\in\calD_n}|I|
\sup_{z\in Q_I^{\textrm{up}}}
\left|P[{\bf d}_kh](z)-\mathbb E_n({\bf d}_kh)(t)\right|^p
\right)^{1/p}=\sum_{k=0}^\infty A_{n,k}(h)\nonumber\\
&\lesssim_p
\sum_{0 \le k<n}2^{-(n-k)/p}\|{\bf d}_kh\|_{L^p(\T)}
+\sum_{k\geq n}2^{-(k-n)}\|{\bf d}_kh\|_{L^p(\T)}.
\label{20260820eq24}
\end{align}
Since both $\{2^{-m/p}\}_{m\geq1}$ and $
\{2^{-m}\}_{m\geq0}$ belong to $\ell^1$, by Young's inequality, \eqref{20260820eq24}, and \eqref{20260820eq02A}, we deduce that 
\begin{align}
\sum_{n=0}^\infty A_n^p \lesssim_p\sum_{k=0}^\infty
\|{\bf d}_kh\|_{L^p(\T)}^p \lesssim_p\|h\|_{B_{p,p}^0(\T)}^p.
\label{20260820eq25}
\end{align}
Finally, for every $I\in\calD$,
$$
\mu(Q_I^{\textrm{up}})
\leq\mu(Q_I)
\leq \left\|\mu\right\|_{\mathcal{CM}, \calD} |I|.
$$
This together with \eqref{20260820eq23} and \eqref{20260820eq25} yields
\begin{align*}
\int_\D|P[h](z)-\mathcal W_{\calD}h(z)|^p\,d\mu(z)
&=\sum_{n=0}^\infty\sum_{I\in\calD_n}
\int_{Q_I^{\textrm{up}}}
|P[h](z)-\mathcal W_{\calD}h(z)|^p\,d\mu(z)\\
&\leq\sum_{n=0}^\infty\sum_{I\in\calD_n}
\mu(Q_I^{\textrm{up}})
\sup_{z\in Q_I^{\textrm{up}}}
|P[h](z)-\mathcal W_{\calD}h(z)|^p\\
&\leq\left\|\mu\right\|_{\mathcal{CM}, \calD}  \sum_{n=0}^\infty A_n^p\\
&\lesssim_p\left\|\mu\right\|_{\mathcal{CM}, \calD} 
\|h\|_{B_{p,p}^0(\T)}^p.
\end{align*}
This proves \eqref{20260820eq14Y}. The extension to every $h\in B_{p,p}^0(\T)$ follows by the density of Haar polynomials.
\end{proof}

We can now complete the second reduction.

\begin{prop}\label{20260820prop01}
Let $1<p<\infty$, and let $\mu$ be a positive Borel measure on $\D$. Then the harmonic embedding \eqref{20260819eq01} holds if and only if the Whitney embedding \eqref{20260820eq08} holds. Moreover,
\begin{equation}\label{20260820eq26}
\widehat C_{best}\simeq_p\widetilde C_{best}.
\end{equation}
\end{prop}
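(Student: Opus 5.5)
We combine Lemma~\ref{20260820lem01} with Theorem~\ref{20260820thm01}, using a triangle inequality in $L^p(\mu)$ in each direction, first for Haar polynomials and then by density (Lemma~\ref{20260820lemHaar}).

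\emph{Harmonic $\Rightarrow$ Whitney.} Assume \eqref{20260819eq01}. Part~(1) of Lemma~\ref{20260820lem01} gives $\|\mu\|_{\mathcal{CM},\calD}\lesssim_p\widetilde C_{best}$. For a Haar polynomial $h$ we write $\mathcal W_{\calD}h=P[h]-(P[h]-\mathcal W_{\calD}h)$. Then
\begin{align*}
\int_\D|\mathcal W_{\calD}h|^p\,d\mu
&\lesssim_p\int_\D|P[h]|^p\,d\mu+\int_\D|P[h]-\mathcal W_{\calD}h|^p\,d\mu\\
&\lesssim_p\left(\widetilde C_{best}+\|\mu\|_{\mathcal{CM},\calD}\right)\|h\|_{B_{p,p}^0(\T)}^p
\lesssim_p\widetilde C_{best}\|h\|_{B_{p,p}^0(\T)}^p .
\end{align*}
The second inequality uses \eqref{20260819eq01} for the first term and \eqref{20260820eq14Y} for the second. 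Since \eqref{20260820eq08} is only required for Haar polynomials, this yields $\widehat C_{best}\lesssim_p\widetilde C_{best}$.

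\emph{Whitney $\Rightarrow$ harmonic.} Assume \eqref{20260820eq08}. Part~(2) of Lemma~\ref{20260820lem01} gives $\|\mu\|_{\mathcal{CM},\calD}\lesssim_p\widehat C_{best}$. The same splitting, with $P[h]=\mathcal W_{\calD}h+(P[h]-\mathcal W_{\calD}h)$, gives
$$
\int_\D|P[h]|^p\,d\mu\lesssim_p\widehat C_{best}\|h\|_{B_{p,p}^0(\T)}^p
$$
for every Haar polynomial $h$.

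It remains to pass to a general $h\in B_{p,p}^0(\T)$, and this is the only step needing care. By Lemma~\ref{20260820lemHaar}, choose Haar polynomials $h_k\to h$ in $B_{p,p}^0(\T)$. The estimate just proved shows that $\{P[h_k]\}$ is Cauchy in $L^p(\mu)$, so it converges in $L^p(\mu)$ to some limit $F$. We must identify $F$ with $P[h]$, and we do this by showing pointwise convergence $P[h_k](z)\to P[h](z)$ for each $z\in\D$. Indeed, for fixed $z=re^{2\pi it}$,
$$
|P[g](z)|\le\sum_{n\in\Z}r^{|n|}|\widehat g(n)|.
$$
Each coefficient functional $g\mapsto\widehat g(n)$ satisfies $|\widehat g(n)|\lesssim\|g\|_{B_{p,p}^0(\T)}$, and the constant here is uniform in $n$. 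To see this, note that $|\widehat g(n)|\le\sum_j|\widehat{\Delta_jg}(n)|\le\sum_{j:\varphi_j(n)\ne0}\|\Delta_jg\|_{L^1(\T)}$, and this sum has at most two terms. Since $\sum_n r^{|n|}<\infty$, it follows that $g\mapsto P[g](z)$ is continuous on $B_{p,p}^0(\T)$ for each fixed $z$. A subsequence of $P[h_k]$ converges $\mu$-a.e. to $F$, so $F=P[h]$ $\mu$-a.e.

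Fatou's lemma then gives \eqref{20260819eq01} for all $h\in B_{p,p}^0(\T)$ with $\widetilde C_{best}\lesssim_p\widehat C_{best}$. Combining the two directions proves \eqref{20260820eq26}.
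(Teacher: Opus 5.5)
Your proposal is correct and follows the paper's proof essentially step for step. You use Lemma~\ref{20260820lem01} for the Carleson bound in each direction, then Theorem~\ref{20260820thm01} with the $L^p(\mu)$ triangle inequality on Haar polynomials, and finally density plus pointwise convergence $P[h_k](z)\to P[h](z)$ and Fatou's lemma for the Whitney-to-harmonic extension. Two small differences: you justify the pointwise convergence explicitly through the uniform bound $|\widehat g(n)|\le 2\|g\|_{B_{p,p}^0(\T)}$, where the paper simply cites convergence in $\mathcal D'(\T)$; and your $L^p(\mu)$-Cauchy step is harmless but redundant once you invoke Fatou.
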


\begin{proof}
Suppose first that the harmonic embedding \eqref{20260819eq01} holds. By \eqref{20260820eq11},
$$
\left\|\mu\right\|_{\mathcal{CM},\calD}
\lesssim_p\widetilde C_{best}.
$$
For every Haar polynomial $h$, Theorem~\ref{20260820thm01} gives
\begin{align*}
\int_\D|\mathcal W_{\calD}h(z)|^p\,d\mu(z)
&\lesssim_p\int_\D|P[h](z)|^p\,d\mu(z)
+\int_\D|P[h](z)-\mathcal W_{\calD}h(z)|^p\,d\mu(z)\\
&\lesssim_p\left(\widetilde C_{best}
+\left\|\mu\right\|_{\mathcal{CM},\calD}\right)
\|h\|_{B_{p,p}^0(\T)}^p\\
&\lesssim_p\widetilde C_{best}\|h\|_{B_{p,p}^0(\T)}^p.
\end{align*}
Thus, the Whitney embedding \eqref{20260820eq08} holds and
\begin{equation}\label{20260820eq27}
\widehat C_{best}\lesssim_p\widetilde C_{best}.
\end{equation}

Conversely, suppose that the Whitney embedding \eqref{20260820eq08} holds. By \eqref{20260820eq12},
$$
\left\|\mu\right\|_{\mathcal{CM},\calD}
\lesssim_p\widehat C_{best}.
$$
For every Haar polynomial $h$, using Theorem~\ref{20260820thm01} again, we have 
\begin{align*}
\int_\D|P[h](z)|^p\,d\mu(z)
&\lesssim_p\int_\D|\mathcal W_{\calD}h(z)|^p\,d\mu(z)
+\int_\D|P[h](z)-\mathcal W_{\calD}h(z)|^p\,d\mu(z)\\
&\lesssim_p\left(\widehat C_{best}
+\left\|\mu\right\|_{\mathcal{CM},\calD}\right)
\|h\|_{B_{p,p}^0(\T)}^p\\
&\lesssim_p\widehat C_{best}\|h\|_{B_{p,p}^0(\T)}^p.
\end{align*}
Let $h\in B_{p,p}^0(\T)$, and choose Haar polynomials $\{h_m\}_{m\geq1}$ such that
$$
\|h_m-h\|_{B_{p,p}^0(\T)}\longrightarrow0.
$$
Since convergence in $B_{p,p}^0(\T)$ implies convergence in $\mathcal D'(\T)$ and the Poisson kernel is smooth, we have
$$
P[h_m](z)\longrightarrow P[h](z),\qquad z\in\D.
$$
It follows from Fatou's lemma that
\begin{align*}
\int_\D|P[h](z)|^p\,d\mu(z)
&\leq\liminf_{m\to\infty}\int_\D|P[h_m](z)|^p\,d\mu(z)\\
&\lesssim_p\widehat C_{best}\lim_{m\to\infty}
\|h_m\|_{B_{p,p}^0(\T)}^p\\
&=\widehat C_{best}\|h\|_{B_{p,p}^0(\T)}^p.
\end{align*}
Thus, the harmonic embedding \eqref{20260819eq01} holds and
\begin{equation}\label{20260820eq28}
\widetilde C_{best}\lesssim_p\widehat C_{best}.
\end{equation}
Combining \eqref{20260820eq27} and \eqref{20260820eq28} proves \eqref{20260820eq26}.
\end{proof}

In view of Proposition~\ref{20260820prop01}, the harmonic embedding problem \eqref{Harmonicembedding} has now been reduced to the Whitney embedding problem \eqref{20260820eq08}, or equivalently, the
tree estimate \eqref{20260820eq09}. The next step is to characterize this inequality in terms of the quantities $\calC_{p,\calD}(\mu)$ and $\calH_{p,\calD}(\mu)$ appearing in Theorem~\ref{20260813thmmain}.

\bigskip 

\section{Weighted Hardy inequalities on dyadic trees} \label{Sec04}

Observe that the tree estimate \eqref{20260820eq09} involves signed sums over the dyadic ancestors of a fixed interval. In this section, we recall the diagonal form of the weighted Hardy inequality from \cite[Theorem~3]{ArcozziRochbergSawyer2002} and record the two specializations needed for the packing energy and the Haar energy.

\subsection{A weighted Hardy inequality by Arcozzi, Rochberg, and Sawyer}\label{20260825subsec01}

For $1<t<\infty$, write $t'=t/(t-1)$. Given a weight
$\rho:\calD\to(0,\infty)$, define
$$
\mathfrak H_t(\mu,\rho)
:=\sup_{\substack{K\in\calD\\\mu(Q_K)>0}}
\frac{1}{\mu(Q_K)}
\left(\sum_{\substack{I\in\calD\\I\subseteq K}}
\mu(Q_I)^{t'}\rho(I)^{1-t'}\right).
$$

We record the following diagonal form of the weighted Hardy inequality of Arcozzi, Rochberg, and Sawyer on the upper Carleson tents from
\cite[Theorem~3]{ArcozziRochbergSawyer2002}.

\begin{prop}\label{20260825thm01}
Let $1<t<\infty$, let $\mu$ be a finite positive Borel measure on
$\D$, and let $\rho:\calD\to(0,\infty)$. Let
$N_t(\mu,\rho)$ be the least constant for which
\begin{equation}\label{20260825eq03}
\sum_{J\in\calD}
\left|\sum_{\substack{I\in\calD\\I\supseteq J}}b_I\right|^t
\mu(Q_J^{\textrm{up}})
\leq N_t(\mu,\rho)\sum_{I\in\calD}|b_I|^t\rho(I)
\end{equation}
holds for all finitely supported scalar sequences
$\{b_I\}_{I\in\calD}$. Then
$$
N_t(\mu,\rho)<\infty \qquad \textrm{if and only if} \qquad 
\mathfrak H_t(\mu,\rho)<\infty.
$$
Moreover,
\begin{equation}\label{20260825eq04}
N_t(\mu,\rho)\simeq_t\mathfrak H_t(\mu,\rho)^{t-1}.
\end{equation}
\end{prop}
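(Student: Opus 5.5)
The plan is to deduce Proposition~\ref{20260825thm01} from \cite[Theorem~3]{ArcozziRochbergSawyer2002} by a dictionary, and to check the testing (necessity) half directly so that the constant relation \eqref{20260825eq04} is transparent.

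\textbf{The dictionary.} Regard $\calD$ as a rooted tree with root $\T$, where $I$ is a descendant of $K$ exactly when $I\subseteq K$. Define the Hardy operator $(\mathcal I b)(J):=\sum_{I\supseteq J}b_I$, which sums along the geodesic from $J$ to the root. Put the vertex weight $\sigma(J):=\mu(Q_J^{\textrm{up}})$ on the left-hand side and the weight $\rho$ on the right-hand side. Then \eqref{20260825eq03} is precisely the two-weight Hardy inequality $\|\mathcal I b\|_{\ell^t(\sigma)}\le N_t^{1/t}\|b\|_{\ell^t(\rho)}$ on this tree.

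The formal adjoint with respect to these weights is
$$
(\mathcal I^*g)(I)=\sum_{J\subseteq I}g(J)\,\sigma(J).
$$
By \eqref{20260820eq03}, $\mathcal I^*\one(I)=\mu(Q_I)$, and more generally $\mathcal I^*\one_{\{J\subseteq K\}}(I)=\mu(Q_I)$ for every $I\subseteq K$. So the ARS testing quantity $\sum_{I\subseteq K}(\mathcal I^*\sigma)(I)^{t'}\rho(I)^{1-t'}\big/(\mathcal I^*\sigma)(K)$ coincides with the expression inside the supremum defining $\mathfrak H_t(\mu,\rho)$.

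\textbf{Duality and necessity.} By $\ell^t$–$\ell^{t'}$ duality, \eqref{20260825eq03} is equivalent to the dual inequality
$$
\sum_I\bigl|(\mathcal I^*g)(I)\bigr|^{t'}\rho(I)^{1-t'}\le N_t^{t'-1}\sum_J|g(J)|^{t'}\sigma(J),
$$
where the exponent comes from $(N^{1/t})^{t'}=N^{t'-1}$. For necessity, test this with $g=\one_{\{J\subseteq K\}}$. The left side dominates $\sum_{I\subseteq K}\mu(Q_I)^{t'}\rho(I)^{1-t'}$, and the right side equals $N_t^{t'-1}\mu(Q_K)$. Hence $\mathfrak H_t\le N_t^{t'-1}$, which is the same as $\mathfrak H_t^{t-1}\le N_t$.

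\textbf{Sufficiency.} This is the main obstacle, and it is exactly the content of \cite[Theorem~3]{ArcozziRochbergSawyer2002}, which we invoke once the dictionary above is in place. Its conclusion is that the dual inequality holds with constant $\lesssim_t\mathfrak H_t$, and dualizing back gives $N_t\lesssim_t\mathfrak H_t^{t-1}$.

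If one wants a self-contained argument, I would follow ARS. Take $g\ge0$ finitely supported. For each $k\in\Z$, select the maximal intervals on which $\mathcal I^*(g\sigma)>2^k$. The descendants of each selected interval give a stopping-tree decomposition, and the testing condition is applied on each stopping subtree $K$. The resulting overlap is controlled by a maximal-function estimate on the tree in $\ell^{t'}(\sigma)$, which bounds the relevant averages of $g$ by a dyadic maximal function.

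The only delicate points are two. First, ARS state the theorem for a weight living on vertices, so one must check that $\sigma=\mu(Q^{\textrm{up}})$ fits that framework. It does, because each upper tent is attached to a unique vertex. Second, the finiteness of $\mu$ guarantees that all sums in the dual formulation converge.
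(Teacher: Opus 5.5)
Your proposal is correct and follows the paper's route. Both identify \eqref{20260825eq03} with the diagonal case $p=q=t$ of \cite[Theorem~3]{ArcozziRochbergSawyer2002} on the dyadic tree with vertex masses $\mu(Q_J^{\textrm{up}})$, and use $\sum_{J\subseteq K}\mu(Q_J^{\textrm{up}})=\mu(Q_K)$ to see that the testing condition there is exactly $\mathfrak H_t(\mu,\rho)<\infty$. The only difference is how the constants are handled: you prove $\mathfrak H_t^{t-1}\le N_t$ directly by duality and testing with $g=\one_{\{J\subseteq K\}}$, and take the linear dependence of the dual constant on $\mathfrak H_t$ from ARS, whereas the paper derives \eqref{20260825eq04} from the normalized case $\mathfrak H_t=1$ via the scaling $\mu\mapsto\lambda\mu$ (the same homogeneity is what justifies your linear-dependence claim).
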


\begin{proof}
Apply \cite[Theorem~3]{ArcozziRochbergSawyer2002} with $p=q=t$ there
and assign to each $J\in\calD$ the mass $\mu(Q_J^{\textrm{up}})$.
Thus, condition \cite[(6)]{ArcozziRochbergSawyer2002} becomes
precisely $\mathfrak H_t(\mu,\rho)<\infty$, while
\cite[(5)]{ArcozziRochbergSawyer2002} becomes
\eqref{20260825eq03} after taking the $t$-th power. This finishes the proof of the first part of the proposition. 

It remains to compare the optimal constants, which follows from a scaling argument. First note that the quantitative assertion in
\cite[Theorem~3]{ArcozziRochbergSawyer2002} gives
$N_t(\mu,\rho)\simeq_t1$ whenever $\mathfrak H_t(\mu,\rho)=1$. For every $\lambda>0$, note that
$$
N_t(\lambda\mu,\rho)=\lambda N_t(\mu,\rho)
\qquad\textrm{and}\qquad
\mathfrak H_t(\lambda\mu,\rho)
=\lambda^{t'-1}\mathfrak H_t(\mu,\rho).
$$
Assume that $\mathfrak H_t(\mu,\rho)>0$ and set
$$
\widetilde\mu:=\mathfrak H_t(\mu,\rho)^{-(t-1)}\mu.
$$
Since $(t-1)(t'-1)=1$, it follows that
$$
\mathfrak H_t(\widetilde\mu,\rho)=1.
$$
Applying the normalized estimate in \cite[Theorem~3]{ArcozziRochbergSawyer2002} to $\widetilde\mu$ gives
$$
\mathfrak H_t(\mu,\rho)^{-(t-1)}N_t(\mu,\rho)
=N_t(\widetilde\mu,\rho)\simeq_t1,
$$
which proves \eqref{20260825eq04}. The case
$\mathfrak H_t(\mu,\rho)=0$ is immediate.
\end{proof}

\subsection{Two consequences of the weighted Hardy inequality}\label{20260825subsec02}

We now record two consequences of Proposition~\ref{20260825thm01} that will play an important role in the proof of Theorem~\ref{20260813thmmain}.

\begin{cor}\label{20260825cor01}
Let $2<p<\infty$ and  $\mu$ be a finite positive Borel measure on
$\D$. Then
\begin{equation}\label{20260825eq09}
\sum_{J\in\calD}
\left(\sum_{\substack{I\in\calD\\I\supseteq J}}|a_I|^2\right)^{p/2}
\mu(Q_J^{\textrm{up}})
\leq C\sum_{I\in\calD}|a_I|^p|I|
\end{equation}
for every finitely supported scalar sequence $\{a_I\}_{I\in\calD}$ if and only if
$$
\calC_{p,\calD}(\mu)<\infty.
$$
Moreover, the optimal constant in \eqref{20260825eq09} is comparable to $\calC_{p,\calD}(\mu)^{\frac{p-2}{2}}$, up to a constant depending only on $p$, where $\calC_{p,\calD}(\mu)$ is the packing energy defined in Definition~\ref{20260825defn01}.
\end{cor}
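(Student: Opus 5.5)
Set $b_I:=|a_I|^2$, $t:=p/2>1$ and $\rho(I):=|I|$. Then the left-hand side of \eqref{20260825eq09} is exactly
$$
\sum_{J\in\calD}\Bigl(\sum_{\substack{I\in\calD\\ I\supseteq J}}b_I\Bigr)^{t}\mu(Q_J^{\textrm{up}}),
$$
and the right-hand side is $C\sum_{I}b_I^t\rho(I)$. So \eqref{20260825eq09} is the positive-coefficient case of the Hardy inequality \eqref{20260825eq03}. The plan is to show that the optimal constant in \eqref{20260825eq09} equals $N_t(\mu,\rho)$, and then read off the characterization from Proposition~\ref{20260825thm01}.

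\textbf{Step 1: positive sequences suffice.} Every finitely supported nonnegative sequence $\{b_I\}$ arises as $b_I=|a_I|^2$ with $a_I=b_I^{1/2}$, so the optimal constant in \eqref{20260825eq09} equals the optimal constant in \eqref{20260825eq03} restricted to nonnegative sequences. For signed or complex $b_I$, the triangle inequality gives $|\sum_{I\supseteq J}b_I|\le\sum_{I\supseteq J}|b_I|$, while the right-hand side of \eqref{20260825eq03} depends only on $|b_I|$. Hence the restriction loses nothing, and the optimal constant in \eqref{20260825eq09} is exactly $N_{p/2}(\mu,|\cdot|)$.

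\textbf{Step 2: compute the testing quantity.} With $t=p/2$ we have $t'=\frac{p/2}{p/2-1}=\frac{p}{p-2}$ and $1-t'=-\frac{2}{p-2}$. Therefore
$$
\mu(Q_I)^{t'}\rho(I)^{1-t'}=\mu(Q_I)^{\frac{p}{p-2}}|I|^{-\frac{2}{p-2}}=\Bigl(\frac{\mu(Q_I)}{|I|}\Bigr)^{\frac{p}{p-2}}|I|,
$$
so $\mathfrak H_{p/2}(\mu,|\cdot|)=\calC_{p,\calD}(\mu)$ term by term, including the convention on $K$ with $\mu(Q_K)>0$. The case $\mu=0$ is trivial.

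\textbf{Step 3: conclude.} Proposition~\ref{20260825thm01} gives that $N_{p/2}<\infty$ if and only if $\calC_{p,\calD}(\mu)<\infty$, and that $N_{p/2}\simeq_p\calC_{p,\calD}(\mu)^{t-1}=\calC_{p,\calD}(\mu)^{\frac{p-2}{2}}$. No step is a genuine obstacle. The only point needing care is Step 1, namely checking that the positivity reduction is an exact identification of the optimal constants rather than just an inequality in one direction.
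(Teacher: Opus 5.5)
Your proposal is correct and follows the paper's proof: apply Proposition~\ref{20260825thm01} with $t=p/2$, $\rho(I)=|I|$ and $b_I=|a_I|^2$, check that $\mathfrak H_{p/2}(\mu,\rho)=\calC_{p,\calD}(\mu)$, and read off the exponent $t-1=\frac{p-2}{2}$. Your Step 1 uses the triangle inequality to show that restricting to nonnegative $b_I$ does not change the optimal constant $N_{p/2}(\mu,\rho)$; this makes explicit a point the paper passes over silently.
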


\begin{proof}
Apply Proposition~\ref{20260825thm01} with
$$
t=\frac p2,\qquad t'=\frac{p}{p-2},\qquad \rho(I)=|I|,
$$
and replace $b_I$ by $|a_I|^2$. In this case,
\begin{align*}
\mathfrak H_{p/2}(\mu,\rho)
&=\sup_{\substack{K\in\calD\\\mu(Q_K)>0}}
\frac{1}{\mu(Q_K)}
\left(\sum_{\substack{I\in\calD\\I\subseteq K}}
\mu(Q_I)^{\frac{p}{p-2}}|I|^{-\frac{2}{p-2}} \right)\\
&=\sup_{\substack{K\in\calD\\\mu(Q_K)>0}}
\frac{1}{\mu(Q_K)}
\left(\sum_{\substack{I\in\calD\\I\subseteq K}}
\left(\frac{\mu(Q_I)}{|I|}\right)^{\frac{p}{p-2}}|I| \right)\\
&=\calC_{p,\calD}(\mu).
\end{align*}
Since $t-1=(p-2)/2$, \eqref{20260825eq09} follows from \eqref{20260825eq04}.
\end{proof}

\begin{cor}\label{20260825cor02}
Let $2<p<\infty$ and  $\mu$ be a finite positive Borel measure on
$\D$. Then
\begin{equation}\label{20260825eq10}
\sum_{J\in\calD}\left|\sum_{\substack{I\in\calD\\I\supseteq J}}a_I\frac{\mu(Q_{I_+})-\mu(Q_{I_-})}{\mu(Q_I)}\right|^p\mu(Q_J^{\textrm{up}})
\leq C\sum_{I\in\calD}|a_I|^p|I|
\end{equation}
for every finitely supported scalar sequence $\{a_I\}_{I\in\calD}$ if and only if
$$
\calH_{p,\calD}(\mu)<\infty.
$$
Moreover, the optimal constant in \eqref{20260825eq10} is comparable to $\calH_{p,\calD}(\mu)^{p-1}$, up to a constant depending only on $p$, where $\calH_{p,\calD}(\mu)$ is the Haar energy defined in Definition~\ref{20260825defn01}. Here and throughout, we adopt the convention that
$$
\frac{\mu(Q_{I_+})-\mu(Q_{I_-})}{\mu(Q_I)}=0
$$
whenever $\mu(Q_I)=0$.
\end{cor}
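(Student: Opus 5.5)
The plan is to apply Proposition~\ref{20260825thm01} with $t=p$, after absorbing the ratio $\beta_I:=\frac{\mu(Q_{I_+})-\mu(Q_{I_-})}{\mu(Q_I)}$ into the weight. We set $b_I:=a_I\beta_I$. The left-hand side of \eqref{20260825eq10} is then exactly the left-hand side of \eqref{20260825eq03}, and we have to compare $\sum_I|a_I|^p|I|$ with $\sum_I |b_I|^p\rho(I)$ for a suitable weight $\rho$.

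Consider first those $I$ with $\beta_I\neq 0$. The natural choice is
$$
\rho(I):=|\beta_I|^{-p}|I|,
$$
so that $|b_I|^p\rho(I)=|a_I|^p|I|$. When $\beta_I=0$ (including the convention $\mu(Q_I)=0$), the coefficient $a_I$ does not enter the left-hand side at all. For such $I$ we take $\rho(I)$ to be a very large number $M$. Then the term $\mu(Q_I)^{p'}\rho(I)^{1-p'}$ contributes an amount to $\mathfrak H_p(\mu,\rho)$ that tends to $0$ as $M\to\infty$, because $1-p'<0$. Two consequences follow:

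\begin{itemize}
\item any sequence $\{b_I\}$ supported on $\{\beta_I\neq0\}$ arises from some $\{a_I\}$, so the optimal constant in \eqref{20260825eq10} equals $N_p(\mu,\rho)$ restricted to such sequences;
\item testing \eqref{20260825eq03} with $b_I$ supported on $\{\beta_I=0\}$ costs only $\sum|b_I|^pM$, which is harmless.
\end{itemize}

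To handle the second point cleanly, I would use a limiting argument: define $\rho_M$ as above and let $M\to\infty$. Alternatively, one can run the Hardy inequality only on the subcollection of nodes with $\beta_I\neq 0$. In that case I would need to check that \cite[Theorem~3]{ArcozziRochbergSawyer2002} tolerates weights taking the value $+\infty$ (equivalently, $b_I$ forced to vanish there). That seems to follow from monotone limits, since $N_p(\mu,\rho_M)$ is nonincreasing in $M$ and bounded below by the restricted constant.

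Next we compute the testing quantity. With $p'=\frac{p}{p-1}$ and $1-p'=-\frac{1}{p-1}$,
$$
\mu(Q_I)^{p'}\rho(I)^{1-p'}
=\mu(Q_I)^{p'}|\beta_I|^{\frac{p}{p-1}}|I|^{-\frac1{p-1}}
=\left(\frac{|\mu(Q_{I_+})-\mu(Q_{I_-})|}{|I|}\right)^{\frac p{p-1}}|I|.
$$
Hence $\mathfrak H_p(\mu,\rho_M)\to\calH_{p,\calD}(\mu)$ as $M\to\infty$. Concretely,
$$
\calH_{p,\calD}(\mu)\le\mathfrak H_p(\mu,\rho_M)\le \calH_{p,\calD}(\mu)+\mu(\D)^{\frac1{p-1}}\cdot(\text{sum of }M^{-\frac1{p-1}}\text{ terms}).
$$
That sum need not converge, since there may be infinitely many nodes with $\beta_I=0$. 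To avoid this, I would instead take $\rho(I)=M_I$, growing fast enough that $\sum_I \mu(Q_I)^{p'}M_I^{-\frac{1}{p-1}}\le\varepsilon\,\mu(Q_K)$ locally. A simpler route is to choose $M_I$ large relative to $\mu(Q_I)$, for instance
$$
M_I^{-\frac1{p-1}}\le \varepsilon\, |I|\,\mu(Q_I)^{1-p'}.
$$
Then the added sum over $I\subseteq K$ is at most $\varepsilon\sum_{I\subseteq K}\mu(Q_I)|I|\lesssim\varepsilon\,\mu(Q_K)$, since each point lies in tents whose lengths sum to at most about $2|K|$.

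Finally, Proposition~\ref{20260825thm01} yields that the optimal constant $C$ satisfies
$$
C\le N_p(\mu,\rho_M)\simeq_p(\calH_{p,\calD}(\mu)+\varepsilon)^{p-1}.
$$
Conversely, $N_p(\mu,\rho_M)\le C+(\text{contribution from }\beta_I=0\text{ nodes})$. Here I expect the main obstacle: making this converse rigorous, i.e.\ showing $\calH^{p-1}\lesssim C$. I would sidestep it by directly reproducing the testing step of \cite{ArcozziRochbergSawyer2002}. One tests \eqref{20260825eq10} via duality with
$$
a_I=\operatorname{sgn}(\beta_I)\,\bigl(\mu(Q_I)|\beta_I|/|I|\bigr)^{\frac1{p-1}}\one_{I\subseteq K}
$$
(truncated to finitely many $I$). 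This never uses nodes with $\beta_I=0$, and it yields $\calH_{p,\calD}(\mu)^{p-1}\lesssim_p C$. Letting $\varepsilon\to0$ in the upper bound then completes the argument.
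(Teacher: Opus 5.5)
Your proposal is correct and is essentially the paper's proof: the same choice $t=p$, $b_I=a_I\beta_I$, $\rho(I)=|I|\,|\beta_I|^{-p}$ in Proposition~\ref{20260825thm01}, and the same computation giving $\mathfrak H_p(\mu,\rho)=\calH_{p,\calD}(\mu)$. The one difference is the nodes with $\beta_I=0$: the paper dismisses them in one line (``it is enough to consider those $I$\dots''), whereas you actually justify that reduction, which Proposition~\ref{20260825thm01} with $\rho:\calD\to(0,\infty)$ does not literally cover. For sufficiency you use finite weights $M_I$, controlled by $\sum_{I\subseteq K}|I|\mu(Q_I)\le 2\mu(Q_K)$; for necessity you use a direct H\"older test with $a_I=\operatorname{sgn}(\beta_I)\bigl(|\mu(Q_{I_+})-\mu(Q_{I_-})|/|I|\bigr)^{1/(p-1)}\one_{\{I\subseteq K\}}$, which is the same duality test as in Step~II of Section~\ref{Sec05}.
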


\begin{proof}
It is enough to consider those $I\in\calD$ for which
$\mu(Q_{I_+})-\mu(Q_{I_-})\neq 0$. Apply Proposition~\ref{20260825thm01} with $t=p$ and
$$
\rho(I):=|I|\left|
\frac{\mu(Q_{I_+})-\mu(Q_{I_-})}{\mu(Q_I)}
\right|^{-p},
$$
and replace $b_I$ by
$$
a_I\frac{\mu(Q_{I_+})-\mu(Q_{I_-})}{\mu(Q_I)}.
$$
Since $t'=p/(p-1)$, a direct computation gives
\begin{align*}
\mu(Q_I)^{t'}\rho(I)^{1-t'}
&=\mu(Q_I)^{\frac{p}{p-1}}|I|^{-\frac{1}{p-1}}
\left|
\frac{\mu(Q_{I_+})-\mu(Q_{I_-})}{\mu(Q_I)}
\right|^{\frac{p}{p-1}}\\
&=\left|\mu(Q_{I_+})-\mu(Q_{I_-})\right|^{\frac{p}{p-1}}
|I|^{-\frac{1}{p-1}}\\
&=\left(
\frac{\left|\mu(Q_{I_+})-\mu(Q_{I_-})\right|}{|I|}
\right)^{\frac{p}{p-1}}|I|.
\end{align*}
Therefore,
$$
\mathfrak H_p(\mu,\rho)=\calH_{p,\calD}(\mu).
$$
Moreover,
$$
\left|a_I\frac{\mu(Q_{I_+})-\mu(Q_{I_-})}{\mu(Q_I)}
\right|^p\rho(I)
=|a_I|^p|I|.
$$
Since $t-1=p-1$, \eqref{20260825eq10} follows from Theorem~\ref{20260825thm01}.
\end{proof}

The estimates \eqref{20260825eq09} and \eqref{20260825eq10} will be used in the next section to prove the two-energy characterization of the tree estimate \eqref{20260820eq09}.

\bigskip 

\section{Proof of Theorem \ref{20260813thmmain}}\label{Sec05}

We now prove Theorem~\ref{20260813thmmain}. By Proposition~\ref{20260819prop01} and Proposition~\ref{20260820prop01}, the Dirichlet embedding \eqref{Dirichletembedding01}, the harmonic embedding \eqref{20260819eq01}, and the Whitney embedding \eqref{20260820eq08} are equivalent. In particular, the optimal constants in these embeddings enjoy
$$
C_{best}\simeq_p\widetilde C_{best}\simeq_p\widehat C_{best}.
$$
It therefore remains to characterize the Whitney embedding \eqref{20260820eq08}, or equivalently, the tree estimate \eqref{20260820eq09}: 
\begin{equation} \label{20260825eq12}
\sum_{J\in\calD}
\left|\widehat h(0)+\sum_{\substack{I\in\calD \\ I\supsetneq J}}a_I \left(\left.h_I\right|_J\right)\right|^p
\mu(Q_J^{\textrm{up}})
\le \breve C_{best} \left(|\widehat h(0)|^p+\sum_{I\in\calD}|a_I|^p|I| \right), 
\end{equation}
where $\breve C_{best}$ denotes the optimal constant in
\eqref{20260825eq12}. By \eqref{20260820eq02} and \eqref{20260820eq07}, it satisfies
$$
\breve C_{best} \simeq_p C_{best}. 
$$
Consequently, to prove Theorem \ref{20260813thmmain}, it suffices to prove
\begin{equation}\label{20260825eq11}
\breve C_{best} \simeq_p
\calC_{p,\calD}(\mu)^{\frac{p-2}{2}}
+\calH_{p,\calD}(\mu)^{p-1}.
\end{equation}

We need the following lemma first. 

\begin{lem}\label{20260827lem01}
Let $2<p<\infty$, and suppose that $\calC_{p,\calD}(\mu)<\infty$, where  we recall that $\calC_{p,\calD}(\mu)$ denotes the packing energy of $\mu$ defined in Definition~\ref{20260825defn01}. For each $I\in\calD$, set
$$
m_I:=
\begin{cases}
\displaystyle\frac{\mu(Q_{I_+})-\mu(Q_{I_-})}{\mu(Q_I)},&\mu(Q_I)>0,\\
0,&\mu(Q_I)=0.
\end{cases}
$$
Then, for every finitely supported sequence $\{a_I\}_{I\in\calD}$,
\begin{align}\label{20260827eq01}
&\sum_{J\in\calD}\left|\sum_{I\in\calD}a_I\left(\one_{\{J\subseteq I_+\}}-\one_{\{J\subseteq I_-\}}-m_I\one_{\{J\subseteq I\}}\right)\right|^p\mu(Q_J^{\textrm{up}})\nonumber\\
&\qquad\lesssim_p\calC_{p,\calD}(\mu)^{\frac{p-2}{2}}\sum_{I\in\calD}|a_I|^p|I|.
\end{align}
\end{lem}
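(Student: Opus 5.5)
The idea is to read the left-hand side of \eqref{20260827eq01} as the $p$-th moment of a martingale on the dyadic tree. Burkholder's square function inequality then removes the cancellation, and Corollary~\ref{20260825cor01} estimates what is left.

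\textbf{Setup.} Regard $\calD$ as a measure space with the finite measure $\nu(\{J\}):=\mu(Q_J^{\textrm{up}})$, so that $\nu(\calD)=\mu(\D)<\infty$. For $I\in\calD$ write $T_I:=\{J\in\calD: J\subseteq I\}$ for the cone below $I$. By \eqref{20260820eq03}, $\nu(T_I)=\mu(Q_I)$. For $n\ge 0$, let $\mathcal F_n$ be the $\sigma$-algebra on $\calD$ whose atoms are:
\begin{enumerate}
\item[$\bullet$] the cones $T_I$ with $I\in\calD_n$;
\item[$\bullet$] the singletons $\{J\}$ with $|J|>2^{-n}$.
\end{enumerate}
This is an increasing filtration, since each $T_I$ with $I\in\calD_n$ splits as $\{I\}\sqcup T_{I_+}\sqcup T_{I_-}$.

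For each $I$, define the function $X_I(J):=\one_{\{J\subseteq I_+\}}-\one_{\{J\subseteq I_-\}}-m_I\one_{\{J\subseteq I\}}$ on $\calD$. It is $\mathcal F_{n+1}$-measurable when $I\in\calD_n$, and it vanishes off $T_I$. Moreover,
$$
\int_{T_I}X_I\,d\nu=\mu(Q_{I_+})-\mu(Q_{I_-})-m_I\mu(Q_I)=0,
$$
and this also holds when $\mu(Q_I)=0$. Hence ${\bf d}_n:=\sum_{I\in\calD_n}a_IX_I$ satisfies $\mathbb E_\nu[{\bf d}_n\mid\mathcal F_n]=0$. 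Indeed, on an atom $T_I$ the average is $0$ by the display (the conditional expectation is taken as $0$ when $\nu(T_I)=0$), and on the singleton atoms ${\bf d}_n$ vanishes identically. So $\{{\bf d}_n\}$ is a finite martingale difference sequence on the finite measure space $(\calD,\nu)$. The inner sum in \eqref{20260827eq01} equals $\sum_n {\bf d}_n(J)$, and the left-hand side is $\|\sum_n{\bf d}_n\|_{L^p(\nu)}^p$.

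\textbf{Square function estimate.} Burkholder's inequality, which holds for finite measures after normalization, gives
$$
\Big\|\sum_n{\bf d}_n\Big\|_{L^p(\nu)}^p\lesssim_p\Big\|\Big(\sum_n|{\bf d}_n|^2\Big)^{1/2}\Big\|_{L^p(\nu)}^p.
$$
Fix $J$. For each $n$, at most one $I\in\calD_n$ contains $J$. Since $|m_I|\le 1$, this gives $|{\bf d}_n(J)|\le 2|a_I|$, and ${\bf d}_n(J)=0$ if no such $I$ exists. Therefore
$$
\sum_n|{\bf d}_n(J)|^2\le 4\sum_{I\supseteq J}|a_I|^2 .
$$
Corollary~\ref{20260825cor01}, applied with optimal constant $\simeq_p\calC_{p,\calD}(\mu)^{\frac{p-2}{2}}$, then bounds
$$
\sum_J\Big(\sum_{I\supseteq J}|a_I|^2\Big)^{p/2}\mu(Q_J^{\textrm{up}})\lesssim_p\calC_{p,\calD}(\mu)^{\frac{p-2}{2}}\sum_I|a_I|^p|I|,
$$
which proves \eqref{20260827eq01}.

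\textbf{Main obstacle.} The step needing the most care is the martingale structure: the measure-zero cones must be handled, and the filtration must be checked to be genuinely nested. Once $\mathbb E_\nu[{\bf d}_n\mid\mathcal F_n]=0$ is verified atom by atom, the rest follows directly from Burkholder's inequality and Corollary~\ref{20260825cor01}.
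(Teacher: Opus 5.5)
Your proposal is correct and follows the paper's proof. The paper uses the same measure space $(\calD,\nu)$ with $\nu(\{J\})=\mu(Q_J^{\textrm{up}})$ and the same filtration, whose atoms are the cones below $I\in\calD_n$ together with the singletons of larger intervals. It then checks that the differences have zero integral on each atom, applies the martingale square-function inequality, bounds the square function pointwise using $|m_I|\le 1$, and finishes with Corollary~\ref{20260825cor01}.
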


\begin{proof}
For each $J \in \calD$, define $\nu(\{J\}):=\mu(Q_J^{\textrm{up}})$, and also write
$\mathcal S(I):=\{J\in\calD:J\subseteq I\}$. 
Therefore, 
$$
\nu(\mathcal S(I))=\sum_{J \subseteq I} \nu(\{J\})=\sum_{J\subseteq I}\mu(Q_J^{\textrm{up}})=\mu(Q_I).
$$
In particular, $\nu(\calD)=\mu(\D)<+\infty$, and hence $(\calD, \nu)$ becomes a finite measure space. 
For each $n\geq0$, let $\mathcal F_n$ be the $\sigma$-algebra whose atoms are the singletons 
$$
\{J\},\qquad J\in\calD_k,\quad 0\leq k<n,
$$
together with the sets
$$
\mathcal S(I),\qquad I\in\calD_n.
$$
Thus, when passing from $\mathcal F_n$ to $\mathcal F_{n+1}$, each atom $\mathcal S(I)$, $I\in\calD_n$, splits into
\begin{equation} \label{20260827eq20}
\{I\},\qquad \mathcal S(I_+),\qquad \mathcal S(I_-).
\end{equation} 
For each $n\geq0$, define
$$
d_n(J):=\sum_{I\in\calD_n}a_I
\left(
\one_{\{J\subseteq I_+\}}
-\one_{\{J\subseteq I_-\}}
-m_I\one_{\{J\subseteq I\}}
\right),
\qquad J\in\calD.
$$
Since $\{a_I\}_{I\in\calD}$ is finitely supported, only finitely many of the functions $d_n$ are nonzero.

\medskip

\noindent\emph{\underline{Claim:}} The sequence $\{d_n\}_{n\geq0}$ is a martingale-difference sequence on $(\calD,\nu)$ with respect to the filtration $\{\mathcal F_n\}_{n\geq0}$; that is, $d_n$ is $\mathcal F_{n+1}$-measurable and
$$
\mathbb E_\nu(d_n\mid\mathcal F_n)=0
$$
for every $n\geq0$.

\smallskip

\noindent\emph{\underline{Proof of the claim:}}
Fix $n\geq0$. If $J\in\calD_k$ for some $k<n$, then $J$ is not contained in any interval belonging to $\calD_n$, and hence $d_n(J)=0$.  Now fix $I\in\calD_n$. By its definition, $d_n$ is constant on each of the three sets in \eqref{20260827eq20}, taking the values
$$
-a_Im_I,\qquad a_I(1-m_I),\qquad -a_I(1+m_I),
$$
respectively. It follows that $d_n$ is $\mathcal F_{n+1}$-measurable. 

It remains to verify that $d_n$ has integral zero on every atom of $\mathcal F_n$. This is immediate for the singleton atoms $\{J\}$ with $J\in\calD_k$ and $k<n$, since $d_n(J)=0$. For an atom $\mathcal S(I)$ with $I\in\calD_n$, we have
\begin{align*}
\int_{\mathcal S(I)}d_n\,d\nu
&=\sum_{J\subseteq I}d_n(J)\mu(Q_J^{\textrm{up}})\\
&=a_I\left(
\sum_{J\subseteq I_+}\mu(Q_J^{\textrm{up}})
-\sum_{J\subseteq I_-}\mu(Q_J^{\textrm{up}})
-m_I\sum_{J\subseteq I}\mu(Q_J^{\textrm{up}})
\right)\\
&=a_I\left(\mu(Q_{I_+})-\mu(Q_{I_-})-m_I\mu(Q_I)\right)=0.
\end{align*}
Therefore, $d_n$ has integral zero on every atom of $\mathcal F_n$, and hence $\mathbb E_\nu(d_n\mid\mathcal F_n)=0$. This proves the claim.

\medskip

By the standard $L^p$ martingale square-function inequality and the claim,
\begin{equation} \label{20260827eq01A}
\sum_{J\in\calD}\left|
\sum_{n=0}^{\infty}d_n(J)
\right|^p \mu(Q_J^{\textrm{up}}) \lesssim_p
\sum_{J\in\calD}
\left(\sum_{n=0}^{\infty}|d_n(J)|^2
\right)^{p/2} \mu(Q_J^{\textrm{up}}).
\end{equation}
For each fixed $J\in\calD$ and each $n\geq0$, there is at most one interval $I\in\calD_n$ such that $I\supseteq J$. Since $|m_I|\leq1$, it follows that
$$
|d_n(J)|
\leq
2\left(
\sum_{\substack{I\in\calD_n\\I\supseteq J}}|a_I|^2
\right)^{1/2}.
$$
Consequently,
\begin{equation} \label{20260827eq01B}
\left(\sum_{n=0}^{\infty}|d_n(J)|^2
\right)^{1/2} \leq 2\left(
\sum_{\substack{I\in\calD\\I\supseteq J}}|a_I|^2
\right)^{1/2}.
\end{equation} 
Therefore, by \eqref{20260827eq01A}, \eqref{20260827eq01B}, and Corollary~\ref{20260825cor01}, we conclude that 
\begin{align*}
\sum_{J\in\calD}
\left|
\sum_{n=0}^{\infty}d_n(J)
\right|^p
\mu(Q_J^{\textrm{up}})
&\lesssim_p
\sum_{J\in\calD}
\left(
\sum_{\substack{I\in\calD\\I\supseteq J}}|a_I|^2
\right)^{p/2}
\mu(Q_J^{\textrm{up}})\\
&\lesssim_p
\calC_{p,\calD}(\mu)^{\frac{p-2}{2}}
\sum_{I\in\calD}|a_I|^p|I|,
\end{align*}
which gives the desired estimate \eqref{20260827eq01} since 
$$
\sum_{n=0}^{\infty}d_n(J)=
\sum_{I\in\calD}a_I
\left(\one_{\{J\subseteq I_+\}}
-\one_{\{J\subseteq I_-\}}
-m_I\one_{\{J\subseteq I\}}
\right).
$$
\end{proof}

\vspace{0.1cm}

We are now ready to prove the main result of the paper.

\begin{proof}[Proof of Theorem~\ref{20260813thmmain}] We divide the proof into three steps.

\vspace{0.1cm} 

\noindent{\bf Step I: Necessity of the packing energy $\calC_{p,\calD}(\mu)$}.
Assume that \eqref{20260825eq12} holds. Take any finitely supported scalar sequence $\{a_I\}_{I\in\calD}$, and let $\{\varepsilon_I\}_{I\in\calD}$ be independent Rademacher variables. Apply \eqref{20260825eq12} with $\widehat h(0)=0$ and with $a_I$ replaced by $\varepsilon_Ia_I$. Since $h_I|_J\in\{-1,1\}$ whenever $J\subsetneq I$, Khinchine's inequality gives
$$
\mathbb E_{\varepsilon}\left|\sum_{\substack{I\in\calD\\I\supsetneq J}}\varepsilon_Ia_I\left(h_I|_J\right)\right|^p
\simeq_p\left(\sum_{\substack{I\in\calD\\I\supsetneq J}}|a_I|^2\right)^{p/2}.
$$
Integrating \eqref{20260825eq12} with respect to the Rademacher variables, we obtain
\begin{equation}\label{20260825eq13}
\sum_{J\in\calD}\left(\sum_{\substack{I\in\calD\\I\supsetneq J}}|a_I|^2\right)^{p/2}\mu(Q_J^{\textrm{up}})
\lesssim_p\breve C_{best}\sum_{I\in\calD}|a_I|^p|I|.
\end{equation}
Observe that Corollary~\ref{20260825cor01} cannot be applied directly to \eqref{20260825eq13}, since the inner sum in \eqref{20260825eq09} is taken over $I\supseteq J$, whereas the inner sum in \eqref{20260825eq13} is taken only over $I\supsetneq J$. Our next goal is to show that \eqref{20260825eq13} remains valid when $I\supsetneq J$ is replaced by $I\supseteq J$.

Recall that every dyadic interval other than $\T$ is either the $+$ child or the $-$ child of its unique dyadic parent. Define
$$
\calD^+:=\{I_+:I\in\calD\},
\qquad
\calD^-:=\{I_-:I\in\calD\}.
$$
Thus,
$$
\calD\setminus\{\T\}=\calD^+\sqcup\calD^-.
$$
If $I=K_+\supseteq J$, then $K\supsetneq J$. Consequently,
$$
\sum_{\substack{I\in\calD^+\\I\supseteq J}}|a_I|^2
\leq\sum_{K\supsetneq J}|a_{K_+}|^2,
$$
while
$$
\sum_{K\in\calD}|a_{K_+}|^p|K|
=2\sum_{I\in\calD^+}|a_I|^p|I|.
$$
Applying \eqref{20260825eq13} to the sequence $\{a_{K_+}\}_{K\in\calD}$ gives
\begin{equation} \label{20260825eq100}
\sum_{J\in\calD}\left(\sum_{\substack{I\in\calD^+\\I\supseteq J}}|a_I|^2\right)^{p/2}\mu(Q_J^{\textrm{up}})
\lesssim_p\breve C_{best}\sum_{I\in\calD^+}|a_I|^p|I|.
\end{equation}
Similarly, applying \eqref{20260825eq13} to $\{a_{K_-}\}_{K\in\calD}$ gives
\begin{equation} \label{20260825eq101}
\sum_{J\in\calD}\left(\sum_{\substack{I\in\calD^-\\I\supseteq J}}|a_I|^2\right)^{p/2}\mu(Q_J^{\textrm{up}})
\lesssim_p\breve C_{best}\sum_{I\in\calD^-}|a_I|^p|I|.
\end{equation}
Taking $\widehat h(0)=a_\T$ and all Haar coefficients equal to zero in \eqref{20260825eq12}, we also obtain
\begin{equation} \label{20260825eq102}
|a_\T|^p\mu(Q_\T)\leq\breve C_{best}|a_\T|^p.
\end{equation} 
Finally, combining the estimates \eqref{20260825eq100}, \eqref{20260825eq101}, and \eqref{20260825eq102}, we deduce that
$$
\sum_{J\in\calD}\left(\sum_{I\supseteq J}|a_I|^2\right)^{p/2}\mu(Q_J^{\textrm{up}})
\lesssim_p\breve C_{best}\sum_{I\in\calD}|a_I|^p|I|, 
$$
which, together with Corollary~\ref{20260825cor01}, gives
\begin{equation}\label{20260825eq15}
\calC_{p,\calD}(\mu)^{\frac{p-2}{2}}\lesssim_p\breve C_{best}.
\end{equation}

\medskip

\noindent{\bf Step II: Necessity of the Haar energy $\calH_{p,\calD}(\mu)$}.
We continue to assume that \eqref{20260825eq12} holds. Setting $\widehat h(0)=0$ in \eqref{20260825eq12}, we obtain
\begin{equation} \label{20260826eq01}
\sum_{J\in\calD}\left|\sum_{\substack{I\in\calD\\I\supsetneq J}}a_I\left(h_I|_J\right)\right|^p\mu(Q_J^{\textrm{up}})
\leq\breve C_{best}\sum_{I\in\calD}|a_I|^p|I|
\end{equation} 
for every finitely supported sequence $\{a_I\}_{I\in\calD}$. We next derive the dual form of \eqref{20260826eq01}. More precisely, define
$$
(Ta)_J:=\sum_{\substack{I\in\calD\\I\supsetneq J}}a_I\left(h_I|_J\right),\qquad J\in\calD.
$$
This allows one to rewrite \eqref{20260826eq01} as
\begin{equation}  \label{20260826eq02}
\left(\sum_{J\in\calD}|(Ta)_J|^p\mu(Q_J^{\textrm{up}})\right)^{1/p}
\leq\breve C_{best}^{1/p}\left(\sum_{I\in\calD}|a_I|^p|I|\right)^{1/p}.
\end{equation} 
Let $\{b_J\}_{J\in\calD}$ be a finitely supported scalar sequence. By H\"older's inequality and \eqref{20260826eq02},
\begin{align} \label{20260826eq03}
\left|\sum_{J\in\calD}(Ta)_J\overline{b_J}\mu(Q_J^{\textrm{up}})\right|
&\leq\left(\sum_{J\in\calD}|(Ta)_J|^p\mu(Q_J^{\textrm{up}})\right)^{1/p}
\left(\sum_{J\in\calD}|b_J|^{p'}\mu(Q_J^{\textrm{up}})\right)^{1/p'} \nonumber \\
&\leq\breve C_{best}^{1/p}\left(\sum_{I\in\calD}|a_I|^p|I|\right)^{1/p}
\left(\sum_{J\in\calD}|b_J|^{p'}\mu(Q_J^{\textrm{up}})\right)^{1/p'}.
\end{align}
On the other hand, since both $\{a_I\}$ and $\{b_J\}$ are finitely supported, we have
\begin{align} \label{20260826eq04}
\sum_{J\in\calD}(Ta)_J\overline{b_J}\mu(Q_J^{\textrm{up}})
&=\sum_{J\in\calD}\sum_{\substack{I \in \calD \\ I\supsetneq J}}a_I\left(h_I|_J\right)\overline{b_J}\mu(Q_J^{\textrm{up}}) \nonumber \\
&=\sum_{I\in\calD}a_I \left(\sum_{\substack{J \in \calD \\ J\subsetneq I}}\left(h_I|_J\right)\overline{b_J}\mu(Q_J^{\textrm{up}}) \right).
\end{align}
For each $I\in\calD$, set
$$
B_I:=\sum_{\substack{J \in \calD \\ J\subsetneq I}}\left(h_I|_J\right)b_J\mu(Q_J^{\textrm{up}}).
$$
Thus, by \eqref{20260826eq03} and \eqref{20260826eq04}, 
$$
\left|\sum_{I\in\calD}a_I\overline{B_I}\right|=\left|\sum_{I\in\calD}\left(a_I |I|^{1/p} \right)(\overline{B_I|I|^{-1/p}})\right|
\leq\breve C_{best}^{1/p}\left(\sum_{I\in\calD}|a_I|^p|I|\right)^{1/p}
\left(\sum_{J\in\calD}|b_J|^{p'}\mu(Q_J^{\textrm{up}})\right)^{1/p'}.
$$
Taking the supremum over all finitely supported sequences $\{a_I\}$ satisfying $\sum_{I\in\calD}|a_I|^p|I|\le 1$,
we obtain
$$
\left(\sum_{I\in\calD}|B_I|^{p'}|I|^{1-p'}\right)^{1/p'}
\leq\breve C_{best}^{1/p}
\left(\sum_{J\in\calD}|b_J|^{p'}\mu(Q_J^{\textrm{up}})\right)^{1/p'}.
$$
Therefore, 
\begin{equation}\label{20260825eq16}
\sum_{I\in\calD}\left|\sum_{\substack{J\in\calD\\J\subsetneq I}}\left(h_I|_J\right)b_J\mu(Q_J^{\textrm{up}})\right|^{p'}|I|^{1-p'}
\leq\breve C_{best}^{p'/p}\sum_{J\in\calD}|b_J|^{p'}\mu(Q_J^{\textrm{up}})
\end{equation}
for every finitely supported scalar sequence $\{b_J\}_{J\in\calD}$.

\vspace{0.1cm}

Fix $K\in\calD$. For every positive integer $N$, apply \eqref{20260825eq16} to
$$
b_J=\one_{\{J\subseteq K,\ |J|\geq2^{-N}\}}, \qquad J \in \calD. 
$$
For every $I\subseteq K$, \eqref{20260821eq02} and
\eqref{20260820eq03} give
\begin{align*}
\lim_{N\to\infty}
\sum_{\substack{J\subsetneq I\\|J|\geq2^{-N}}}
\left(h_I|_J\right)\mu(Q_J^{\textrm{up}})
&=\sum_{J\subseteq I_+}\mu(Q_J^{\textrm{up}})
-\sum_{J\subseteq I_-}\mu(Q_J^{\textrm{up}})\\
&\qquad=
\mu(Q_{I_+})-\mu(Q_{I_-}).
\end{align*} 
Letting $N\to\infty$ in \eqref{20260825eq16} with the above choice of $b_J$, and applying Fatou's lemma, we deduce that for any $K \in \calD$, 
$$
\sum_{I\subseteq K}\left|\mu(Q_{I_+})-\mu(Q_{I_-})\right|^{p'}|I|^{1-p'}
\leq\breve C_{best}^{p'/p}\mu(Q_K).
$$
Dividing by $\mu(Q_K)$ and taking the supremum over all $K\in\calD$ such that $\mu(Q_K)>0$, the definition of the Haar energy (see Definition \ref{20260825defn01}) gives
$\calH_{p,\calD}(\mu)\leq\breve C_{best}^{p'/p}$. Since $p'/p=1/(p-1)$, it follows that
\begin{equation}\label{20260825eq17}
\calH_{p,\calD}(\mu)^{p-1}\leq\breve C_{best}.
\end{equation}

\medskip

\medskip

\noindent{\bf Step III: Sufficiency}.
Assume that
$$
\calC_{p,\calD}(\mu)^{\frac{p-2}{2}}+\calH_{p,\calD}(\mu)^{p-1}<+\infty.
$$
Our goal is to prove the following tree estimate
\begin{equation} \label{20260825eq12AB}
\sum_{J\in\calD}
\left|\widehat h(0)+\sum_{\substack{I\in\calD \\ I\supsetneq J}}a_I \left(\left.h_I\right|_J\right)\right|^p
\mu(Q_J^{\textrm{up}})
\lesssim_p \left(\calC_{p,\calD}(\mu)^{\frac{p-2}{2}}+\calH_{p,\calD}(\mu)^{p-1}\right) \left(|\widehat h(0)|^p+\sum_{I\in\calD}|a_I|^p|I| \right).
\end{equation}
If $\mu=0$, there is nothing to prove. Hence, assume that
$\mu(\D)>0$. Let $\widehat h(0)\in\C$ and let $\{a_I\}_{I\in\calD}$ be a finitely supported scalar sequence. For each $I\in\calD$, let $m_I$ be as in Lemma~\ref{20260827lem01}. For each $J\in\calD$, decompose
\begin{equation}\label{20260825eq18}
\widehat h(0)+\sum_{\substack{I\in\calD\\I\supsetneq J}}a_I\left(h_I|_J\right)
={\bf A}_1(J)+{\bf A}_2(J)+{\bf A}_3(J),
\end{equation}
where
\begin{align*}
{\bf A}_1(J)&:=\widehat h(0),\\
{\bf A}_2(J)&:=\sum_{I\in\calD}a_I\left(\one_{\{J\subseteq I_+\}}-\one_{\{J\subseteq I_-\}}-m_I\one_{\{J\subseteq I\}}\right),\\
{\bf A}_3(J)&:=\sum_{\substack{I\in\calD\\I\supseteq J}}a_Im_I.
\end{align*}
Indeed, \eqref{20260825eq18} follows from
$$
\one_{\{J\subseteq I_+\}}-\one_{\{J\subseteq I_-\}}
=
\begin{cases}
h_I|_J,& I\supsetneq J,\\
0,& \text{otherwise}.
\end{cases}
$$

\medskip 

\noindent{\textit{\underline{Treatment of ${\bf A}_1$}}}.
By Definition \ref{20260825defn01}, we have
\begin{align*}
\calC_{p, \calD}(\mu)
& \ge \frac{1}{\mu(\D)} \left[\sum_{I \in \calD} \left(\frac{\mu(Q_I)}{|I|}\right)^{\frac{p}{p-2}}|I| \right] \ge \frac{1}{\mu(\D)} \cdot \left(\frac{\mu(\D)}{|\T|}\right)^{\frac{p}{p-2}}|\T| \gtrsim_p \mu(\D)^{\frac{2}{p-2}}.
\end{align*}
Therefore, 
\begin{equation} \label{20260825eq19}
\sum_{J\in\calD}|\widehat h(0)|^p\mu(Q_J^{\textrm{up}})
=|\widehat h(0)|^p\mu(\D)
\leq\calC_{p,\calD}(\mu)^{\frac{p-2}{2}}|\widehat h(0)|^p.
\end{equation}

\vspace{0.1cm}

\noindent{\textit{\underline{Treatment of ${\bf A}_2$}}}.
By Lemma~\ref{20260827lem01},
\begin{align}\label{20260825eq20}
&\sum_{J\in\calD}\left|\sum_{I\in\calD}a_I\left(\one_{\{J\subseteq I_+\}}-\one_{\{J\subseteq I_-\}}-m_I\one_{\{J\subseteq I\}}\right)\right|^p\mu(Q_J^{\textrm{up}})\nonumber\\
&\qquad\lesssim_p\calC_{p,\calD}(\mu)^{\frac{p-2}{2}}\sum_{I\in\calD}|a_I|^p|I|.
\end{align}

\vspace{0.1cm}

\noindent{\textit{\underline{Treatment of ${\bf A}_3$}}}.
By Corollary~\ref{20260825cor02},
\begin{equation}\label{20260825eq21}
\sum_{J\in\calD}\left|\sum_{\substack{I\in\calD\\I\supseteq J}}a_Im_I\right|^p\mu(Q_J^{\textrm{up}})
\lesssim_p\calH_{p,\calD}(\mu)^{p-1}\sum_{I\in\calD}|a_I|^p|I|.
\end{equation}

\vspace{0.1cm}

Combining \eqref{20260825eq18}--\eqref{20260825eq21} yields the desired estimate \eqref{20260825eq12AB}. Finally, since $\breve C_{best}$ is the optimal constant in \eqref{20260825eq12}, it follows that
$$
\breve C_{best}\lesssim_p\calC_{p,\calD}(\mu)^{\frac{p-2}{2}}+\calH_{p,\calD}(\mu)^{p-1}.
$$
Together with \eqref{20260825eq15} and \eqref{20260825eq17}, this proves \eqref{20260825eq11} and completes the proof of Theorem~\ref{20260813thmmain}.
\end{proof}

\bigskip 

\section{Proof of Theorem~\ref{mainthm02}} \label{Sec06}

The purpose of this section is to show that the packing energy and the Haar energy capture different properties of the measure. We prove this by constructing finite atomic measures whose packing energies remain uniformly bounded while their Haar energies become arbitrarily large. We begin with the following lemma.

\begin{lem}\label{20260828lem01}
Let $2<p<\infty$, $N\geq2$, $I_0\in\calD$, and
$$
\calS_L(I_0):=\left\{I \in \calD: \; I \subseteq I_0, \; |I| \ge 2^{-L}{|I_0|} \right\}. 
$$
For every sufficiently small $a>0$, there exist a positive integer $L$ and positive sequence 
$$
\{D(I)\}_{I\in\mathcal S_L(I_0)}
$$
with the following properties:
\begin{enumerate}
\item $D(I_0)=a$, $D(I)\simeq a$ for every $I\in\mathcal S_L(I_0)$, and
$$
D(I)=\frac{D(I_+)+D(I_-)}{2}
$$
whenever $I_+,I_-\in\mathcal S_L(I_0)$;
\item
\begin{equation}\label{20260828eq01}
\sum_{I\in\mathcal S_L(I_0)}D(I)^{\frac{p}{p-2}}|I|
\simeq_p N^{-2}a|I_0|;
\end{equation}
\item
\begin{equation}\label{20260828eq02}
\sum_{\substack{I\in\mathcal S_L(I_0)\\|I|>2^{-L}|I_0|}}
|D(I_+)-D(I)|^{\frac{p}{p-1}}|I|
\simeq_p N^{-\frac{p-2}{p-1}}a|I_0|.
\end{equation}
\end{enumerate}
Moreover, for every $J\in\mathcal S_L(I_0)$,
\begin{equation}\label{20260828eq03}
\sum_{\substack{I\in\mathcal S_L(I_0)\\I\subseteq J}}
D(I)^{\frac{p}{p-2}}|I|
\lesssim_p N^{-2}D(J)|J|
\end{equation}
and
\begin{equation}\label{20260828eq04}
\sum_{\substack{I\in\mathcal S_L(I_0)\\I\subseteq J\\|I|>2^{-L}|I_0|}}
|D(I_+)-D(I)|^{\frac{p}{p-1}}|I|
\lesssim_p N^{-\frac{p-2}{p-1}}D(J)|J|.
\end{equation}
\end{lem}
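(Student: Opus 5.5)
The plan is to take $D$ to be a \emph{stopped dyadic random walk} started at $a$. Fix a step size $s\in(0,a/4)$, to be chosen below. Set $D(I_0)=a$ and define $D$ recursively down the tree $\calS_L(I_0)$:
\begin{itemize}
\item if $|D(I)-a|<a/2$, set $D(I_\pm)=D(I)\pm s$;
\item otherwise set $D(I_+)=D(I_-)=D(I)$.
\end{itemize}
Under normalized Lebesgue measure on $I_0$, the values $D(I)$, $I\in\calD_{k}$, form a martingale $X_k$ with increments $\pm s$. It is stopped at the exit time $\tau$ from the band $(a/2,3a/2)$. Hence $D(I)=\frac{D(I_+)+D(I_-)}2$ holds automatically, and $a/2-s\le D(I)\le 3a/2+s$, so $D(I)\simeq a$. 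This gives property (1).

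Next I fix the parameters. Take $L:=\lfloor c(a/s)^2\rfloor$ with a small absolute constant $c$. By Kolmogorov's (Doob's) maximal inequality,
$$\mathbb P(\tau\le L)\le \frac{Ls^2}{(a/2)^2}\le 4c\le \tfrac12 .$$
Hence the number of active levels satisfies $\mathbb E[\tau\wedge L]\simeq L$. Every active node contributes $|D(I_+)-D(I)|=s$, and inactive nodes contribute $0$. Summing level by level gives
$$
\sum |D(I_+)-D(I)|^{\frac p{p-1}}|I| = s^{p'}|I_0|\,\mathbb E[\tau\wedge L]\simeq s^{p'}L|I_0|\simeq s^{p'-2}a^2|I_0|,
$$
and, using $D\simeq a$,
$$
\sum D(I)^{\frac p{p-2}}|I|\simeq a^{\frac p{p-2}}(L+1)|I_0|.
$$
Requiring the first sum to be $\simeq N^{-\frac{p-2}{p-1}}a|I_0|$ forces $s^{-\frac{p-2}{p-1}}=N^{-\frac{p-2}{p-1}}a^{-1}$. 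Since $p'-2=-\frac{p-2}{p-1}$, this means
$$
s:=N a^{\frac{p-1}{p-2}}.
$$
The consistency check is that then
$$
(a/s)^2=N^{-2}a^{-\frac2{p-2}},
$$
so $a^{\frac2{p-2}}L\simeq N^{-2}$, which is exactly (2). The requirements $s\le a/4$ and $L\ge1$ both amount to $Na^{\frac1{p-2}}\ll1$. This is where the hypothesis that $a$ is sufficiently small (depending on $N,p$) enters; the integer rounding of $L$ only costs constants.

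For the localized bounds (3) and (4), fix $J\in\calS_L(I_0)$ and only use upper bounds. The subtree below $J$ has at most $L+1$ levels, and $D\simeq a\simeq D(J)$ there. Therefore
$$
\sum_{I\subseteq J}D(I)^{\frac p{p-2}}|I|\lesssim a^{\frac2{p-2}}L\cdot D(J)|J|\simeq N^{-2}D(J)|J|,
$$
$$
\sum_{I\subseteq J}|D(I_+)-D(I)|^{p'}|I|\le s^{p'}L|J|\simeq s^{p'-2}a^2|J|\simeq N^{-\frac{p-2}{p-1}}D(J)|J|.
$$
No probabilistic input is needed here, since we bound the number of active levels trivially by $L$.

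I expect the main obstacle to be choosing the construction at all. A deterministic additive or level-wise scheme fails. Along the all-``$+$'' path the increments accumulate, and the constraint $\sum_k s_k\lesssim a$ together with $p'>1$ would force $a\gtrsim N^{-(p-2)}$, contradicting smallness of $a$. Stopping the walk at the band boundary is what lets $D$ stay comparable to $a$ while the total increment energy is of order $s^{p'}(a/s)^2$. The one genuinely probabilistic step is the lower bound $\mathbb E[\tau\wedge L]\gtrsim L$ via the maximal inequality, which is needed for the lower half of (3).
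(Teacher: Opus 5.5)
Your proposal is correct and is essentially the paper's proof. Like the paper, you use a dyadic random walk with step of order $a/\sqrt L$ and $L\simeq N^{-2}a^{-2/(p-2)}$, frozen once it leaves $[a/2,3a/2]$; Kolmogorov's maximal inequality gives the lower bound in \eqref{20260828eq02}, and trivial level-counting gives \eqref{20260828eq03}--\eqref{20260828eq04}. The differences are cosmetic: you fix the step $s=Na^{\frac{p-1}{p-2}}$ first and derive $L$, whereas the paper fixes $L$ and sets $\eta=a/(K\sqrt L)$, and you phrase the lower bound as $\mathbb E[\tau\wedge L]\gtrsim L$ rather than as a per-generation lower bound on the total length of the selected intervals.
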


\begin{proof}
Set
$$
L:=\left\lceil N^{-2}a^{-\frac{2}{p-2}}\right\rceil.
$$
Since $a$ is sufficiently small, we may assume that $L\geq2$. In particular,
\begin{equation}\label{20260828eq05}
L\simeq N^{-2}a^{-\frac{2}{p-2}}.
\end{equation}
Let $K$ be a sufficiently large fixed constant and set
$$
\eta:=\frac{a}{K\sqrt L}.
$$
We define $D(I)$ recursively for $I\in\mathcal S_L(I_0)$. Set $D(I_0)=a$ and declare $I_0$ is selected. Suppose that $D(I)$ has been defined for some $I\in\mathcal S_L(I_0)$ satisfying $|I|>2^{-L}|I_0|$. If $I$ is selected, define
\begin{equation}\label{20260828eq06}
D(I_+):=D(I)+\eta,\qquad D(I_-):=D(I)-\eta.
\end{equation}
Each child is selected if its value belongs to $[a/2,3a/2]$ and unselected otherwise. If $I$ is unselected, define
\begin{equation}\label{20260828eq07}
D(I_+):=D(I_-):=D(I)
\end{equation}
and therefore both children are unselected. Continuing in this manner defines $D(I)$ for every $I\in\mathcal S_L(I_0)$.

It follows immediately from \eqref{20260828eq06} and \eqref{20260828eq07} that
\begin{equation}\label{20260828eq08}
D(I)=\frac{D(I_+)+D(I_-)}{2}
\end{equation}
whenever $I\in\mathcal S_L(I_0)$ and $|I|>2^{-L}|I_0|$. Moreover, for any unselected arc $I$, its value $D(I)$ differs from one of the endpoints of $[a/2,3a/2]$ by at most $\eta$. Its value remains unchanged at all its dyadic offsprings. Since $\eta\leq a/K$, we may therefore assume
\begin{equation}\label{20260828eq09}
\frac{a}{4}\leq D(I)\leq2a,\qquad I\in\mathcal S_L(I_0)
\end{equation}
by choosing $K$ sufficiently large. 

\vspace{0.1cm}

We next estimate the total length of the selected intervals. For every $x\in I_0$ and $0\leq k\leq L$, let $I_k(x)$ be the dyadic subinterval of $I_0$ containing $x$ such that $|I_k(x)|=2^{-k}|I_0|$. For $1\leq j\leq L$, define
$$
r_j(x):=
\begin{cases}
1,&I_j(x)=[I_{j-1}(x)]_+;\\
\\
-1,&I_j(x)=[I_{j-1}(x)]_-.
\end{cases}
$$
Thus, $\{r_j\}_{j=1}^L$ is the Rademacher system on the arc $I_0$ with respect to normalized Lebesgue measure. Fix $1\leq k\leq L$. If the intervals
$$
I_0(x)=I_0, \; I_1(x),\; \ldots,I_{k-1}(x)
$$
are all selected, then applying \eqref{20260828eq06} repeatedly, we conclude that 
\begin{align*}
D(I_k(x))
&=D(I_{k-1}(x))+\eta r_k(x)\\
&=D(I_{k-2}(x))+\eta\bigl(r_{k-1}(x)+r_k(x)\bigr)\\
&=\cdots\\
&=D(I_0)+\eta\sum_{j=1}^kr_j(x)
=a+\eta\sum_{j=1}^kr_j(x).
\end{align*}
Consequently, if the interval containing $x$ is unselected at or before generation $L$, then
$$
\max_{1\leq k\leq L}\left|\sum_{j=1}^kr_j(x)\right|
\geq\frac{a}{2\eta}
=\frac{K\sqrt L}{2}.
$$
Therefore, since the functions $\{r_j\}_{1 \le j \le L}$ are independent and have mean zero, Kolmogorov's maximal inequality, together with their orthogonality, gives
\begin{equation}\label{20260828eq10}
\left|\left\{x\in I_0:
\max_{1\leq k\leq L}\left|\sum_{j=1}^kr_j(x)\right|
\geq\frac{K\sqrt L}{2}\right\}\right| \leq\frac{4}{K^2L}
\int_{I_0}\left|\sum_{j=1}^Lr_j(x)\right|^2\,dx =\frac{4}{K^2}|I_0|.
\end{equation}
As a consequence, for $K>0$ sufficiently large, we conclude that, at every generation between $0$ and $L$, the total length of the selected intervals is bounded below by a fixed positive multiple of $|I_0|$. Since the intervals at each generation form a partition of $I_0$, their total length is also bounded above by $|I_0|$. Hence
\begin{equation}\label{20260828eq11}
\sum_{k=0}^{L-1}
\sum_{\substack{I\in\mathcal S_L(I_0)\\
I\ {\rm is\ selected}\\|I|=2^{-k}|I_0|}}
|I|
\simeq L|I_0|.
\end{equation}

We now prove \eqref{20260828eq01}. For each $0\leq k\leq L$, the intervals $I\in\mathcal S_L(I_0)$ satisfying $|I|=2^{-k}|I_0|$ form a partition of $I_0$. It follows from \eqref{20260828eq09} that
\begin{align*}
\sum_{I\in\mathcal S_L(I_0)}D(I)^{\frac{p}{p-2}}|I|
&\simeq_p
a^{\frac{p}{p-2}}
\sum_{k=0}^L
\sum_{\substack{I\in\mathcal S_L(I_0)\\|I|=2^{-k}|I_0|}}
|I|\\
&=(L+1)a^{\frac{p}{p-2}}|I_0|\\
&\simeq_p N^{-2}a|I_0|,
\end{align*}
where the last estimate follows from \eqref{20260828eq05}. This proves \eqref{20260828eq01}.

\vspace{0.1cm}

To prove \eqref{20260828eq02}, observe from \eqref{20260828eq06} and \eqref{20260828eq07} that
$$
|D(I_+)-D(I)|
=
\begin{cases}
\eta,&I\text{ is selected},\\
\\
0,&I\text{ is unselected}.
\end{cases}
$$
Therefore, by \eqref{20260828eq11},
\begin{align*}
\sum_{\substack{I\in\mathcal S_L(I_0)\\|I|>2^{-L}|I_0|}}
|D(I_+)-D(I)|^{\frac{p}{p-1}}|I|
&=
\eta^{\frac{p}{p-1}}
\sum_{k=0}^{L-1}
\sum_{\substack{I\in\mathcal S_L(I_0)\\
I\ {\rm is\ selected}\\|I|=2^{-k}|I_0|}} |I|\\
&\qquad\simeq_p
L\eta^{\frac{p}{p-1}}|I_0|\\
&\qquad=
\frac{1}{K^{\frac{p}{p-1}}} \cdot 
a^{\frac{p}{p-1}}
L^{\frac{p-2}{2(p-1)}}|I_0|\\
&\qquad\simeq_p
N^{-\frac{p-2}{p-1}}a|I_0|,
\end{align*}
where we again used \eqref{20260828eq05}. This proves \eqref{20260828eq02}.

It remains to prove the localized estimates. Fix $J\in\mathcal S_L(I_0)$. By \eqref{20260828eq09} again, we have
\begin{align*}
\sum_{\substack{I\in\mathcal S_L(I_0)\\I\subseteq J}}
D(I)^{\frac{p}{p-2}}|I| \lesssim_p(L+1)a^{\frac{p}{p-2}}|J| \lesssim_p N^{-2}a|J| \lesssim_p N^{-2}D(J)|J|.
\end{align*}
This proves \eqref{20260828eq03}. Similarly,
\begin{align*}
\sum_{\substack{I\in\mathcal S_L(I_0)\\I\subseteq J\\|I|>2^{-L}|I_0|}}
|D(I_+)-D(I)|^{\frac{p}{p-1}}|I|
\leq
L\eta^{\frac{p}{p-1}}|J| \lesssim_p
N^{-\frac{p-2}{p-1}}a|J| \lesssim_p
N^{-\frac{p-2}{p-1}}D(J)|J|.
\end{align*}
This proves \eqref{20260828eq04} and completes the proof.
\end{proof}

We are now ready to prove Theorem~\ref{mainthm02}. The main idea is to obtain the desired measures by iterating the construction in Lemma~\ref{20260828lem01}.

\begin{proof}[Proof of Theorem~\ref{mainthm02}]
Fix an integer $N\geq2$. Choose $a_0>0$ sufficiently small, set
$$
\mathcal R_0:=\{\T\},\qquad D(\T):=a_0,
$$
and apply Lemma~\ref{20260828lem01} with $I_0=\T$ and $a=a_0$. Denote the resulting integer by $L_\T$, and let $\mathcal R_1$ consist of the intervals $I\in\mathcal S_{L_\T}(\T)$ satisfying
$$
|I|=2^{-L_\T}|\T|.
$$
Thus, $\mathcal R_1$ is the $L_\T$-th dyadic generation below $\T$. Suppose that $\mathcal R_{m-1}$ has been constructed. For every $R\in\mathcal R_{m-1}$, apply Lemma~\ref{20260828lem01} with $I_0=R$ and $a=D(R)$, and denote the resulting integer by $L_R$. Define
$$
\mathcal R_m:=\bigcup_{R\in\mathcal R_{m-1}}\left\{I\in\mathcal S_{L_R}(R):|I|=2^{-L_R}|R|\right\}.
$$
Continue this construction for $N^2$ steps. By \eqref{20260828eq09}, an induction on $m$ gives
$$
D(R)\leq 2^m a_0\leq 2^{N^2}a_0,\qquad R\in\calR_m,\quad 0\leq m\leq N^2.
$$
Therefore, we may choose $a_0>0$ so small that $2^{N^2}a_0$ is sufficiently small for Lemma~\ref{20260828lem01} to apply at every step of the construction.

For every $0\leq m\leq N^2$, the intervals in $\mathcal R_m$ are pairwise disjoint and form a partition of $\T$. Let
$$
\mathcal G:=\bigcup_{m=1}^{N^2}\bigcup_{R\in\mathcal R_{m-1}}\mathcal S_{L_R}(R)
$$
be the finite collection of all intervals arising in the construction. By the above construction, \eqref{20260828eq08} holds at every $I\in\mathcal G\setminus\mathcal R_{N^2}$, and since $|I_+|=|I_-|=|I|/2$, it follows that
\begin{equation}\label{20260828eq12}
D(I)|I|=D(I_+)|I_+|+D(I_-)|I_-|,\qquad I\in\mathcal G\setminus\mathcal R_{N^2}.
\end{equation}

For each $R\in\mathcal R_{N^2}$, choose a point $z_R\in Q_R^{\textrm{up}}$ and define
\begin{equation}\label{20260828eq13}
\mu^{(N)}:=\sum_{R\in\mathcal R_{N^2}}D(R)|R| \cdot \one_{z_R}.
\end{equation}
We claim that
\begin{equation}\label{20260828eq14}
\mu^{(N)}(Q_I)=D(I)|I|,\qquad I\in\mathcal G.
\end{equation}
Indeed, \eqref{20260828eq14} follows directly from \eqref{20260828eq13} when $I\in\mathcal R_{N^2}$. Suppose that it holds for the two children of some $I\in\mathcal G\setminus\mathcal R_{N^2}$. Then \eqref{20260828eq12} gives
\begin{align*}
\mu^{(N)}(Q_I)
&=\mu^{(N)}(Q_{I_+})+\mu^{(N)}(Q_{I_-})\\
&=D(I_+)|I_+|+D(I_-)|I_-|\\
&=D(I)|I|.
\end{align*}
Thus, \eqref{20260828eq14} follows by backward induction. In particular, this means that $\mu^{(N)}(\D)=D(\T)|\T|<+\infty$, and hence $\mu^{(N)}$ is a finite measure. 

We also claim that
\begin{equation}\label{20260828eq15}
\mu^{(N)}(Q_I)=0,\qquad I\in\calD\setminus\mathcal G.
\end{equation}
Indeed, the intervals in $\mathcal R_{N^2}$ form a partition of $\T$, and $\mathcal G$ contains every dyadic ancestor of each interval in $\mathcal R_{N^2}$. Thus, every $I\in\calD\setminus\mathcal G$ is strictly contained in a unique interval $R\in\mathcal R_{N^2}$. Since $z_R\in Q_R^{\textrm{up}}$, the box $Q_I$ contains none of the points supporting $\mu^{(N)}$. This proves \eqref{20260828eq15}.

\medskip 

It remains to estimate the packing and Haar energies of $\mu^{(N)}$.

\vspace{0.1cm}

\noindent{\textbf{Estimate of the packing energy of $\mu^{(N)}$.}}
Fix $J\in\mathcal G$. Let $m_0$ be the smallest integer for which there exists $R_0\in\mathcal R_{m_0-1}$ such that
$J\in\mathcal S_{L_{R_0}}(R_0)$. By \eqref{20260828eq03}, the contribution from the part of this block lying below $J$ satisfies
\begin{equation}\label{20260829eq01}
\sum_{\substack{I\in\mathcal S_{L_{R_0}}(R_0)\\I\subseteq J}}D(I)^{\frac{p}{p-2}}|I|\lesssim_p N^{-2}D(J)|J|.
\end{equation}
For each $m_0<m\leq N^2$, the intervals $R\in\mathcal R_{m-1}$ contained in $J$ form a pairwise disjoint partition of $J$. Applying \eqref{20260828eq12} repeatedly, we have 
\begin{equation}\label{20260829eq02}
\sum_{\substack{R\in\mathcal R_{m-1}\\R\subseteq J}}D(R)|R|=D(J)|J|.
\end{equation}
Applying \eqref{20260828eq03} to the block rooted at each such $R$ and then using \eqref{20260829eq02}, we obtain
\begin{align}
\sum_{\substack{R\in\mathcal R_{m-1}\\R\subseteq J}}\sum_{I\in\mathcal S_{L_R}(R)}D(I)^{\frac{p}{p-2}}|I|
&\lesssim_p N^{-2}\sum_{\substack{R\in\mathcal R_{m-1}\\R\subseteq J}}D(R)|R|\nonumber\\
&=N^{-2}D(J)|J|.
\label{20260829eq03}
\end{align}
The intervals in $\mathcal G$ contained in $J$ are covered by the part of the first block appearing in \eqref{20260829eq01} and the subsequent blocks appearing in \eqref{20260829eq03}. Hence, by \eqref{20260828eq14}, \eqref{20260828eq15}, \eqref{20260829eq01}, and \eqref{20260829eq03}, we have 
\begin{align*}
&\sum_{\substack{I\in\calD\\I\subseteq J}}\left(\frac{\mu^{(N)}(Q_I)}{|I|}\right)^{\frac{p}{p-2}}|I| =\sum_{\substack{I\in\mathcal G\\I\subseteq J}}D(I)^{\frac{p}{p-2}}|I| \\
&\leq\sum_{\substack{I\in\mathcal S_{L_{R_0}}(R_0)\\I\subseteq J}}D(I)^{\frac{p}{p-2}}|I|+\sum_{m=m_0+1}^{N^2}\sum_{\substack{R\in\mathcal R_{m-1}\\R\subseteq J}}\sum_{I\in\mathcal S_{L_R}(R)}D(I)^{\frac{p}{p-2}}|I|\\
&\lesssim_p (N^2-m_0+1)N^{-2}D(J)|J|\\
&\lesssim_p D(J)|J|=\mu^{(N)}(Q_J).
\end{align*}
By \eqref{20260828eq15}, every $J\in\calD$ satisfying $\mu^{(N)}(Q_J)>0$ belongs to $\mathcal G$. Taking the supremum over all such $J$, we conclude that
\begin{equation}\label{20260828eq16}
\calC_{p,\calD}(\mu^{(N)})\lesssim_p1.
\end{equation}
\medskip

\noindent{\textbf{Estimate of the Haar energy of $\mu^{(N)}$.}}
For every $I\in\mathcal G\setminus\mathcal R_{N^2}$, \eqref{20260828eq08} and \eqref{20260828eq14} give
\begin{align*}
\mu^{(N)}(Q_{I_+})-\mu^{(N)}(Q_{I_-})
&=\frac{|I|}{2}\bigl(D(I_+)-D(I_-)\bigr)\\
&=|I|\bigl(D(I_+)-D(I)\bigr).
\end{align*}
Therefore,
\begin{equation}\label{20260828eq17}
\left(\frac{\left|\mu^{(N)}(Q_{I_+})-\mu^{(N)}(Q_{I_-})\right|}{|I|}\right)^{\frac{p}{p-1}}|I|
=|D(I_+)-D(I)|^{\frac{p}{p-1}}|I|.
\end{equation}

Fix $1\leq m\leq N^2$. Using \eqref{20260828eq02} with $I_0$ replaced by each $R\in\mathcal R_{m-1}$, and then iterating \eqref{20260828eq12}, we obtain
\begin{align} \label{20260829eq53}
\sum_{R\in\mathcal R_{m-1}}\sum_{\substack{I\in\mathcal S_{L_R}(R)\\|I|>2^{-L_R}|R|}}
|D(I_+)-D(I)|^{\frac{p}{p-1}}|I|
&\gtrsim_p N^{-\frac{p-2}{p-1}}\sum_{R\in\mathcal R_{m-1}}D(R)|R| \nonumber \\
&=N^{-\frac{p-2}{p-1}}D(\T)|\T|.
\end{align}
By the definition of $\mathcal R_m$, the collections
$$
\bigcup_{R\in\mathcal R_{m-1}}\left\{I\in\mathcal S_{L_R}(R):|I|>2^{-L_R}|R|\right\},
\qquad 1\leq m\leq N^2,
$$
are pairwise disjoint. This together with \eqref{20260828eq17} and \eqref{20260829eq53} gives
\begin{align*}
&\sum_{\substack{I\in\calD\\I\subseteq\T}}
\left(\frac{\left|\mu^{(N)}(Q_{I_+})-\mu^{(N)}(Q_{I_-})\right|}{|I|}\right)^{\frac{p}{p-1}}|I|\\
&\geq\sum_{m=1}^{N^2}\sum_{R\in\mathcal R_{m-1}}\sum_{\substack{I\in\mathcal S_{L_R}(R)\\|I|>2^{-L_R}|R|}}
\left(\frac{\left|\mu^{(N)}(Q_{I_+})-\mu^{(N)}(Q_{I_-})\right|}{|I|}\right)^{\frac{p}{p-1}}|I|\\
&=\sum_{m=1}^{N^2}\sum_{R\in\mathcal R_{m-1}}\sum_{\substack{I\in\mathcal S_{L_R}(R)\\|I|>2^{-L_R}|R|}}
|D(I_+)-D(I)|^{\frac{p}{p-1}}|I|\\
&\gtrsim_p\sum_{m=1}^{N^2}N^{-\frac{p-2}{p-1}}D(\T)|\T|=N^{\frac{p}{p-1}}D(\T)|\T|.
\end{align*}
On the other hand, \eqref{20260828eq14} gives
$\mu^{(N)}(Q_\T)=D(\T)|\T|$. It follows from the definition of the Haar energy that
\begin{align*}
\calH_{p,\calD}(\mu^{(N)})
&\geq\frac{1}{\mu^{(N)}(Q_\T)}
\sum_{\substack{I\in\calD\\I\subseteq\T}}
\left(\frac{\left|\mu^{(N)}(Q_{I_+})-\mu^{(N)}(Q_{I_-})\right|}{|I|}\right)^{\frac{p}{p-1}}|I| \gtrsim_p N^{\frac{p}{p-1}}.
\end{align*}
Together with \eqref{20260828eq16}, this completes the proof of Theorem~\ref{mainthm02}.
\end{proof}

\end{document}